\renewcommand*\env@matrix[1][*\c@MaxMatrixCols c]{%
  \hskip -\arraycolsep
  \let\@ifnextchar\new@ifnextchar
  \array{#1}}
\def\cP{\mathcal P}
\def\cB{\mathcal B}
\newcommand{\Q}{\mathbb{Q}}
\newcommand{\R}{{\mathbb{R}}}
\newcommand{\F}{\mathbb{F}}
\newcommand{\C}{\mathbb{C}}
\newcommand{\Z}{\mathbb{Z}}
\newcommand{\N}{\mathbb{N}}
\newcommand{\fa}{{\mathfrak a}}
\newcommand{\fm}{{\mathfrak m}}
\newcommand{\fg}{{\mathfrak g}}
\newcommand{\gl}{\mathfrak {gl}}
\def\Ann{\operatorname{Ann}}
\def\NLL{\operatorname{NLL}}
\def\End{\operatorname{End}}
\def\Hess{\operatorname{Hess}}
\def\hess{\operatorname{hess}}
\def\im{\operatorname{Im}}
\def\ker{\operatorname{Ker}}
\def\chr{\operatorname{char}}
\def\chr{\operatorname{char}}
\def\tr{\operatorname{tr}}
\def\weight{\operatorname{weight}}
\def\lra{\longrightarrow}
\def\ra{\rightarrow}
\newcommand\mapsfrom{\mathrel{\reflectbox{\ensuremath{\mapsto}}}}
\def\sl{\mathfrak{sl}}
\newcommand{\contract}{\bullet}
\numberwithin{equation}{section}
\theoremstyle{plain} 
\newtheorem{thm}{Theorem}[section]
\newtheorem*{thm*}{Theorem}
\newtheorem{question}[thm]{Question}
\newtheorem*{question*}{Question}
\newtheorem{cor}[thm]{Corollary}
\newtheorem*{cor*}{Corollary}
\newtheorem{lem}[thm]{Lemma}
\newtheorem{prop}[thm]{Proposition}
\theoremstyle{definition}
\newtheorem{defn}[thm]{Definition}
\newtheorem*{defn*}{Definition}
\newtheorem{construction}[thm]{Construction}
\newtheorem{ex}[thm]{Example}
\newtheorem{exer}[thm]{Exercise}
\newtheorem*{exer*}{Exercise}
\newtheorem*{notation*}{Notation}
\theoremstyle{remark}
\newtheorem{rem}[thm]{Remark}
\newtheorem*{rem*}{Remark}
\def\C{{\mathbb{C}}}
\def\F{{\mathbb{F}}}
\def\P{{\mathbb{P}}}
\def\R{{\mathbb{R}}}
\def\cB{{\mathcal{B}}}
\DeclareMathOperator{\Hom}{Hom}
\DeclareMathOperator{\Span}{Span}
\def\ra{\rightarrow}
\def\la{\leftarrow}
\newcommand*{\toccontents}{\@starttoc{toc}}
\begin{document}

\title{Lefschetz properties through a topological lens}
\author{Alexandra Seceleanu}
\address{Mathematics Department, University of Nebraska--Lincoln, 203 Avery Hall}
\email{aseceleanu@unl.edu}
\date{}
\thanks{Partially supported by NSF DMS--2401482.}

\maketitle 


\begin{abstract}
These notes were prepared for the {\em Lefschetz Preparatory School}, a graduate summer course held in Krakow, May 6--10, 2024. They present the story of the algebraic Lefschetz properties from their origin in algebraic geometry to some recent developments in commutative algebra. The common thread of the notes is a bias towards topics surrounding the algebraic Lefschetz properties that have a topological flavor. These range from the Hard Lefschetz Theorem for cohomology rings to commutative algebraic analogues of these rings, namely artinian Gorenstein rings, and topologically motivated operations among such rings. 
\end{abstract}

\tableofcontents

\pagenumbering{arabic}

\section{Introduction}

The topic of these notes is the algebraic Lefschetz properties, which are abstractions of  the important Hard Lefschetz Theorem from complex geometry. \cref{s: 1} explains the topological context of this result. \cref{s: 3} introduces the algebraic Lefschetz properties and their relevance to commutative algebra. \cref{s: 4} establishes a correspondence between the strong Lefschetz property and an action of the Lie group $\sl_2$.  \cref{s: 6} focuses on the class of Gorenstein rings and their construction via Macaulay's inverse system. \cref{s: 5}  and \cref{s: 7} investigate how various constructions of new rings from old interact with the Lefschetz properties.

These notes are deeply influenced by the monograph \cite{book} by T.~Harima, T.~Maeno, H.~Morita, Y.~Numata, A.~Wachi and J.~Watanabe. This reference contains a parallel description of many topics  in these notes except for \cref{s: 7}, which describes more recent developments based on \cite{IMMSW}. The treatment of earlier chapters, while deeply influenced by \cite{book}, reflects the author's mathematical taste.



\section{Cohomology rings and the Hard Lefschetz Theorem} \label{s: 1}
This section gives an introduction to the origins of the algebraic Lefschetz properties. The motivation for this topic comes from algebraic topology, so we will spend some time looking at how the Lefschetz property arises there.

\subsection{Cohomology rings} 
Let $\F$ be a vector space and let $X$ be a topological space (such as projective space $\P^n$ or the $n$-dimensional sphere $S^n$).  We recall the notion of cohomology of $X$ with coefficients in $\F$.

First, one can think of $X$ as being made out of simple cells (or at least one can approximate $X$ in this manner). This endows $X$ with a {\em cell complex (CW-complex)} structure. 

\begin{ex}[CW structures on sphere]
\label{ex:CW sphere}
The 2-dimensional sphere $S^2$ can be obtained by taking a point (0-dimensional cell) and glueing a 2-dimensional disc onto it along its entire boundary. So the CW-structure of $S^2$ is
$$S^2= \text{pt} + \text{2-dimensional disc}$$
More generally  one can do the same for the $n$-dimensional sphere $S^n$:
$$S^n= \text{pt} + n\text{-dimensional disc}.$$

There is another, less economical way to give the sphere a CW-structure. For $S^2$ one takes two $0$-dimensional cells, connects them using two line segments ($1$-dimensional cells) to form a circle $S^1$. Then one can glue two $2$-dimensional discs via their boundaries to the circle to form $S^2$.
Similarly, there is a CW-structure on $S^n$ with two cells in each dimension summarized by
$$S^n= 2\times \text{pt} + 2\times  \text{1-dimensional disc} +  2\times \text{2-dimensional disc} + \cdots +2\times \text{n-dimensional disc}.$$

\end{ex}

\begin{ex}[CW structure on the real projective space]
\label{ex: CW proj}
Consider first $\P_\R^n$. It can be written as $S^n/ \{\pm 1\}$. If we take a CW structure on $S^n$ with two cells in each dimension, then the action of $-1$ swaps the cells, thus they become identified in the quotient. Due to this $\P_\R^n$ has a CW structure with one cell in each dimension. 
$$\P_\R^n= \text{pt} + \text{1-dimensional cell}+ \dots +\text{n-dimensional cell}.$$

Next consider $\P_\C^n$. This has a cell in every even (real) dimension:
$$\P_\C^n= \text{pt} + \text{2-dimensional cell}+ \dots +\text{2n-dimensional cell}.$$
\end{ex}

Proceeding towards homology, we define a {\em chain complex} ${\bf C_\bullet}(X)$ by letting $C_n(X)$ be the  $\F$-vector space generated by the $n$-dimensional cells of $X$. There are so-called boundary maps\footnote{We will not describe the boundary maps here.}, which fit into the following sequence
$${\bf C_\bullet}(X):  0\la \F^{\# \text{0-cells}} \la \F^{\# \text{1-cells}} \la \cdots \la \F^{\# \dim(X)\text{-cells}}\la 0.$$
There is also a dual version  called the {\em cochain complex} of $X$ with coefficients in $R$
$${\bf C^\bullet}(X)=\Hom({\bf C_\bullet}(X),\F):  0\ra \F^{\# \text{0-cells}} \stackrel{\partial_1}{\ra} \F^{\# \text{1-cells}} \stackrel{\partial_2}{\ra} \cdots \stackrel{\partial_n}{\ra} \F^{\# \dim(X)\text{-cells}}\ra 0.$$

\begin{defn}
The {\em cohomology groups} of $X$ are defined as 
$$H^i(X,\F)=H^i\left({\bf C^\bullet}(X)\right)=\ker{\partial_i}/\im{\partial_{i-1}}.$$
\end{defn}

\begin{ex}\label{ex: H sphere}
Based on \cref{ex:CW sphere}  we have the following chain complexes, which lead to easy computations of the corresponding cohomology groups.
$${\bf C^\bullet}(S^n): 0\ra \F \ra 0 \ra 0 \ra \dots \ra \F\ra 0 $$
$$ H^i(S^n,\F)=\begin{cases} \F & i=0,n \\0 & \text{ otherwise} \end{cases}$$

$${\bf C^\bullet}(\P_\C^n): 0\ra \F\ra 0 \ra \F \ra 0 \ra \F \ra  \dots\ra  \F \ra 0 $$
$$ H^i(\P_\C^n,R)=\begin{cases} \F &  i=\text{even } \\ 0 & i=\text{odd.} \end{cases}$$
\end{ex}

The special property of these cohomology groups that allows us to study them using tools from ring theory is that they can be assembled into a graded ring.
\begin{defn}
The {\em cohomology ring} of $X$ is
$$H^\bullet(X,\F)=\bigoplus_{i\geq 0}H^i(X,\F).$$
 To study multiplication on this ring we need to define a map called the {\em cup product} 
 $$H^m(X,\F)\times H^n(X,\F)\to H^{m+n}(X,\F).$$ For this recall the K\"unneth isomorphism: for two topological spaces $X$ and $Y$ if one of $X$ or $Y$ has torsion-free homology (this holds when working over a field $\F$) and has finitely many cells in each dimension, there is an isomorphism  $\mu:H^\bullet(X\times Y,\F)\cong H^\bullet(X,\F)\otimes_\F H^\bullet(Y,\F)$. The composite with the diagonal map
$$ H^\bullet(X,\F)\otimes_\F  H^\bullet(X,\F)\stackrel{\cong}{\ra} H^\bullet(X\times X,\F)\stackrel{\Delta^*}{\ra}  H^\bullet(X,\F)$$ defines the cup product by $x \cup y = \Delta^*\mu(x \otimes y)$.
The cup product is not commutative, but it is what we call {\em graded commutative}. This means that the ring is graded so that 
\begin{quote}
if $x\in H^m(X,\F)$, $|x|=m$ denotes the {\em cohomological degree} of $x$,
\end{quote}
  and any elements $x,y$ in this ring satisfy
\begin{equation}
\label{eq:gradedcomm}
x \cup y = (-1)^{|x||y|}y \cup x.
\end{equation}
Note that in a graded commutative ring even degree elements commute with all other elements, while odd degree elements anti-commute with other odd degree elements.
\end{defn}

\begin{ex}[Cohomology ring of a sphere]
From \cref{ex: H sphere} we have 
\[
H^\bullet(S^n,\F)=\F\oplus \F.
\]
Set $1$ and $e$ to be the basis of $H^0(S^n,\F)$ and $H^n(S^n,\F)$ as $\F$-vector spaces, respectively. Then $1$ is the multiplicative identity of the ring $H^\bullet(S^n,\F)$ and $e^2=e \cup e \in H^{2n}(S^n,\F) = 0$, so 
$$H^\bullet(S^n,\F)=\F[e]/(e^2) \text{ with } |e|=n.$$
\end{ex}

\begin{ex}[Cohomology ring of a torus]
\label{ex:torus}
Applying the K\"unneth formula to the torus $T^n=S^1\times \cdots \times S^1$ gives for elements $e_1,\ldots, e_n$ with $|e_i|=1$ 
\[
H^\bullet(T^n,\F)=\F[e_1]/(e_1^2)\otimes_\F \F[e_2]/(e_2^2)\otimes_\F \F[e_n]/(e_n^2)=\bigwedge_\F\langle e_1,\ldots, e_n\rangle.
\]
Note that the tensor product above is taken in the category of graded-commutative algebras which implies that $e_ie_j=-e_je_i$ as expected since $|e_i|=|e_j|=1$. If the characteristic of $\F$ is not equal to 2 then this implies $e_i^2=0$ for all $i$. The ring above, denoted  $\bigwedge_\F\langle e_1,\ldots, e_n\rangle$, is called an {\em exterior algebra}. As an $\F$-vector space, a basis of the exterior algebra is given by all the square-free monomials in the variables $e_1, \ldots, e_n$. 
\end{ex}

\begin{ex}[Cohomology ring of projective space]\label{ex: H proj space}
From \cref{ex: H sphere} we have $H^\bullet(\P_\C^n,\F)=\F\oplus \F \oplus \cdots \oplus \F$, with $n$ summands in degrees $0,2, \ldots, 2n$. Set $x$ to be the generator of $H^2(\P_\C^n,\F)$. It turns out then that $x^i\neq 0\in H^{2i}(\P_\C^n,\F)$ for $1\leq i\leq n$, so $x^i$ generates $H^{2i}(\P_\C^n,\F)$. Moreover $x^{n+1}=0$ since $H^{2n+2}(\P_\C^n,\F)=0$. Thus we have $$H^\bullet(\P_\C^n,\F)=\F[x]/(x^{n+1}), \text{ with }|x|=2.$$
We can apply the K\"unneth formula to compute
\begin{eqnarray*}
H^\bullet(\P_\C^{d_1}\times \P_\C^{d_2}\times  \cdots \times\P_\C^{d_n},\F) &\cong& \F[x_1]/(x^{d_1+1})\otimes_\F \F[x_2]/(x^{d_2+1})\otimes_\F \cdots \otimes_\F \F[x_n]/(x^{d_n+1})\\
&\cong& \F[x_1,\ldots,x_n]/(x_1^{d_1+1}, \ldots, x_n^{d_n+1}), \text{ with }|x_i|=2.
 \end{eqnarray*}
\end{ex}

\subsection{The Hard Lefschetz Theorem}

Solomon Lefschetz (1888--1972)  was a prominent mathematician who did fundamental work on algebraic topology, its applications to algebraic geometry, and the theory of non-linear ordinary differential equations. His career, including transitions from industry to mathematics and from working in Nebraska and Kansas to Princeton University is beutifully summarized in his own words in \cite{Lefschetzbio}. He was also a great supporter of mathematics in developing countries, helping train a great number of Mexican mathematicians.
Lefschetz understood that cohomology rings can be used to study questions in algebraic geometry. Speaking about his work Lefschetz states:
\begin{quote}
{\em ``The harpoon of algebraic topology was  planted  in the body of the whale of algebraic geometry."}
\end{quote}

We now come to the main result that we have been building up to. Let $X$ be an algebraic subvariety of $P_\C^n$  and let $H$ denote a general hyperplane in $P_\C^n$. Then $X \cap H$ is a subvariety of $X$ of real codimension two and thus, by a, standard construction in algebraic geometry, represents a cohomology class $L \in H^2(X,\R)$ called  the class of a hyperplane section. In more detail, $L$ is an $\F$-linear homomorphism that takes a dimension 2 subvariety of $X$ (or a 2-dimensional cell of $X$ if we view this as a CW-complex), intersects it with $H$ and returns the number of points of intersection. This function is then extended $\F$-linearly.

\begin{thm}[Hard Lefschetz Theorem]
\label{HLT}
 Let $X$ be a smooth irreducible complex projective variety of complex dimension $n$ (real dimension $2n$), $H^\bullet(X)=H^\bullet(X,\R)$, and let $L \in H^2(X,\R)$ be the class of a  general hyperplane section. Then for $0\leq i\leq n$ the following maps are isomorphisms
 $$L^{i}:H^{n-i}(X)\to H^{n+i}(X), \text{ where } L^i(x)=\underbrace{L\cup \cdots \cup L}_{L^i} \cup \, x.$$
\end{thm}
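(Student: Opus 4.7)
The plan is to prove the Hard Lefschetz Theorem via Hodge theory on the Kähler manifold $X$, packaging the key structure as a representation of $\sl_2$ on the total cohomology. Since $X$ is a smooth complex projective variety, the restriction of the Fubini--Study metric from $\P^N_\C$ to $X$ is a Kähler metric whose Kähler form $\omega$ represents a positive multiple of the hyperplane class $L \in H^2(X,\R)$; up to rescaling we may assume $[\omega]=L$.

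First I would invoke the Hodge theorem to identify each cohomology class with its unique harmonic representative, so $H^\bullet(X,\R)$ becomes identified with the finite-dimensional space $\cH^\bullet$ of harmonic forms for the Laplacian $\Delta=dd^*+d^*d$. Next I would introduce three operators on the full complex of differential forms on $X$: the Lefschetz operator $L(\alpha)=\omega\wedge\alpha$, which raises degree by $2$; its formal adjoint $\Lambda$ with respect to the $L^2$-inner product induced by the Kähler metric, which lowers degree by $2$; and the weight operator $H$ acting as multiplication by $k-n$ on $k$-forms.

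The technical heart of the proof is to establish the \emph{Kähler identities}, whose algebraic shadow is the commutation relations
$$[L,\Lambda]=H,\qquad [H,L]=2L,\qquad [H,\Lambda]=-2\Lambda.$$
These identities are precisely the defining relations of the Lie algebra $\sl_2$, so the triple $(L,\Lambda,H)$ determines a representation of $\sl_2$ on the space of differential forms on $X$. A further calculation, again relying on the Kähler condition, shows that each of $L$, $\Lambda$, and $H$ commutes with $\Delta$. Consequently the $\sl_2$-action preserves $\cH^\bullet$, descending to an $\sl_2$-action on $H^\bullet(X,\R)$ in which $H^k(X,\R)$ is precisely the weight space of weight $k-n$.

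Finally I would appeal to the classification of finite-dimensional representations of $\sl_2$: every such representation decomposes into irreducibles, each irreducible has weights symmetric about zero, and within any irreducible the raising operator $L^i$ induces an isomorphism from the weight $-i$ subspace to the weight $+i$ subspace for $0\le i\le n$. Summing over irreducible components yields the desired isomorphism $L^i\colon H^{n-i}(X,\R)\to H^{n+i}(X,\R)$. The main obstacle in the entire argument is the verification of the Kähler identities and the commutation of $L$, $\Lambda$, $H$ with $\Delta$: these are local statements requiring careful bookkeeping with the interplay of the complex, Riemannian, and symplectic structures, and they are the place where the Kähler (equivalently, projective) hypothesis on $X$ is indispensable.
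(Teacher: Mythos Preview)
Your outline is the standard Hodge-theoretic proof via the K\"ahler identities and the $\sl_2$-action on harmonic forms, and it is correct as a sketch. However, the paper does not actually prove \cref{HLT}: it only states the theorem and then remarks on its history, noting that the first complete proof is due to Hodge and that ``the standard proof given nowadays uses the representation theory of the Lie algebra $\sl_2(\C)$ and is due to Chern.'' So there is no proof in the paper to compare against; your proposal is precisely the Chern approach the paper alludes to but does not carry out.
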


\begin{rem}
The Hard Lefschetz theorem holds for $H^\bullet(X,\F)$ where $\F$ is any field of characteristic zero (not just $\R$), but the conclusion of the theorem is false in positive characteristic.
\end{rem}

The theorem above was first stated by Lefschetz in \cite{Lefschetz}, but his proof was not entirely rigorous.The first complete proof of \cref{HLT} was given by Hodge \cite{Hodge}. The standard proof given nowadays uses the representation theory of the Lie algebra $\sl_2(\C)$ and is due to Chern  \cite{Chern}.  Lefschetz's original proof was only recently made rigorous by Deligne\cite{Deligne}, who extended it to positive characteristic.

\begin{ex}[The Hard Lefschetz theorem in action]
See \Cref{ex: H proj space} for context. For $H^\bullet(P_\C^n)=\F[x]/(x^{n+1})$ the class of a hyperplane is $L=x$ (recall that $|x|=2$)  and it gives whenever $i\equiv n\pmod{2}$ isomorphisms
\begin{eqnarray*}
 H^{n-i}(P_\C^n)=x^{\frac{n-i}{2}}\F\xrightarrow{\times x^i} H^{n+i}(P_\C^n)= x^{\frac{n+i}{2}}\F \\
 x^{\frac{n-i}{2}}y\mapsto x^i(x^{\frac{n-i}{2}}y)=x^{\frac{n+i}{2}}y.
\end{eqnarray*}
\end{ex}

Cohomology rings of  $n$-dimensional complex projective varieties $X$ with coefficients in a field $\F$ satisfy the following properties:
\begin{enumerate}
\item[(1)] $H^\bullet(X,\F)$ is a graded commutative ring\footnote{It is worth cautioning that the phrase graded commutative ring  has a very different meaning from commutative graded ring. In the former odd degree elements anti-commute while in the latter all elements commute regardless of their degree.} in the sense of \eqref{eq:gradedcomm}. Its even part $A:=H^{2\bullet(}X,\F)=\bigoplus_{i\geq 0} H^{2i}(X,\F)$ is a commutative graded ring as defined in the next chapter. We can re-grade this ring by setting $|x|=i$ if $x\in H^{2i}(X,\F)$. With this convention we have $|L|=1$.
\item[(2)] $H^\bullet(X,\F)$ and $A$ are finite dimensional $\F$-vector spaces (so $A$ is an artinian ring cf.~\cref{def: artinian})
\item[(3)] $H^\bullet(X,\F)$ and $A$ satisfiy Poincar\'e duality (hence $A$ is a Gorenstein ring cf.~\cref{prop: Poincare}).
\end{enumerate}

The main goal of these notes is to demonstrate how one may hope to extend the Hard Lefschetz theorem (and some weaker versions thereof) to arbitrary rings which may not necessarily be cohomology rings, but satisfy at least some of the properties above. Thus we are motivated by the following question. 

\begin{question} Which commutative graded rings $A$ that are artinian or both artinian and Gorenstein also satisfy the conclusion of  the Hard Lefschetz theorem?
\end{question}

\section{Classes of graded rings} \label{s: 2}

From now on all rings will be commutative unless specified otherwise.

\subsection{Artinian algebras}

\begin{defn}[Graded ring]
A commutative ring $A$ is an ($\N$--){\em graded ring} provided it decomposes as $$A=\bigoplus_{i\geq 0}A_i$$ with $A_i$  abelian groups such that $\forall i,j \in \N$ $A_iA_j\subseteq A_{i+j}$ ($a\in A_i, b\in A_j \Rightarrow ab\in A_{i+j}$).
\end{defn}

From now on we restrict to graded rings $A$ with $A_0=\F$ a field. I will refer to these as $\F$-{\em algebras}. Note that in particular such a ring $A$ and each of its homogeneous components $A_i$ are $\F$ vector spaces. 

An $\N$--graded ring $A$ has a unique homogeneous maximal ideal, namely $\fm=\bigoplus_{i\geq 1}A_i$.

\begin{ex}
$A=\F[x_1,\dots,x_n]$ is the fundamental example of a graded ring with $A_i=$ the set of homogeneous polynomials of degree $i$. Note that the degree of $x_i$ is allowed to be an arbitrary positive integer.
\end{ex}

\begin{exer}
Show that if $A$ is a commutative, Noetherian, graded $\F$-algebra  then  $\dim_\F A_i$ is finite for each $i$.
\end{exer}

\begin{defn}[Hilbert function]
The {\em Hilbert function} of a Noetherian graded $\F$-algebra $A$ is the function 
$$h_A:\N\to \N, h_A(i)=\dim_\F A_i.$$
The {\em Hilbert series} of $A$ is the power series $H_A(t)=\sum_{i\geq 0} h_A(i)t^i$.
\end{defn}

\begin{exer}
Prove that the Hilbert function of the polynomial ring $R=\F[x_1,\ldots, x_n]$ is given by 
\[
h_R(i)=\binom{n+i-1}{i}, \, \forall i\geq 0
\]
and the Hilbert series is 
\[
H_R(t)=\frac{1}{(1-t)^n}.
\]
\end{exer}

\begin{ex}
\label{ex: truncated poly}
The Hilbert function of the truncated polynomial ring $A=\frac{\F[x_1,\ldots, x_n]}{(x_1,\ldots, x_n)^d}$ is given by 
\[
h_A(i)=\begin{cases}
\binom{n+i-1}{i} & \text{ if } 0\leq i < d\\
0 & \text{ if }  i \geq  d.
\end{cases}
\]
Thus $H_A(t)=\sum_{i=0}^{d-1}\binom{n+i-1}{i}  t^i$.
\end{ex}

\begin{ex}
\label{ex:222}
Consider  a field $\F$ and let $A=\F[x,y,z]/(x^2,y^2,z^2)$. 
Clearly, $A$ is a finite dimensional $\F$-vector space with basis given by the monomials $\{1,x,y,z,xy,yz,xz,xyz\}$. We see that the elements of $A$ have only four possible degrees 0,1,2,3 and moreover
\begin{eqnarray*}
A_0&=& \Span_\F\{1\}\cong \F \Rightarrow h_A(0)=1\\
A_1&=& \Span_\F\{x,y,z\}\cong \F^3 \Rightarrow h_A(1)=3\\
A_2&=& \Span_\F\{xy,yz,xz\}\cong \F^3 \Rightarrow h_A(2)=3\\
A_3&=& \Span_\F\{xyz\}\cong \F \Rightarrow h_A(3)=1\\
A_i&=& 0, \forall i\geq 4  \Rightarrow h_A(i)=0, \forall i\geq 4
\end{eqnarray*}

Thus $H_A(t)=1+3t+3t^2+t^3$.  
\end{ex}

In \cref{ex: truncated poly} and \cref{ex:222} the Hilbert series was in fact a polynomial, equivalently the Hilbert function was eventually equal to zero. We now define a class of graded rings which satisfy this property. 

\begin{defn}[Artinian ring] \label{def: artinian}
A (local or) graded $\F$-algebra $A$ with (homogeneous) maximal ideal $\fm$ is {\em artinian}  if any of the following equivalent conditions holds.

\begin{itemize}
\item[(a)] $A$ is finite dimensional as a $\F$-vector space.
\item[(b)] $A$ has Krull dimension zero.
\item[(c)] $\mathfrak m^d = 0$  for some (hence all sufficiently large) integers $d \geq 1$. If $A$ is graded this can be restated as $A_d=0$ for sufficiently large integers $d\geq 1$.
\item[(d)] $A$ satisfies the descending chain condition on ideals.
\item[(e)] There exists a descending sequence of ideals
$$ A=\fa_0 \supseteq \fa_1 \supseteq \fa_2 \supseteq  \dots \supseteq  \fa_\ell=0  \text{ such that } \fa_{i-1}/\fa_i\cong\F.$$  Such a sequence of ideals is called a {\em composition series}.
\end{itemize}

Moreover, if $R=\F[x_1,\ldots, x_n]$ is a polynomial ring and $A=R/I$ for some homogeneous ideal $I$ of $R$ then the conditions above are also equivalent to
\begin{itemize}
\item[(f)] For each $0 \leq i \leq n$ there is some integer $p_i$ such that $x_i^{p_i} \in I$.

\item[(g)] If $\F$ is algebraically closed, another equivalent condition is that the vanishing locus of $I$ in projective space is empty.

\end{itemize}
\end{defn}

\subsection{Artinian Gorenstein rings and complete intersections}

\begin{defn}[Socle]\label{def: AG}
For a graded artinian $\F$ algebra the maximal integer $d$ such that $A_d \neq 0$ is called the {\em maximal socle degree} of $A$. The {\em socle} of $A$ is the ideal 
$$(0:_A\fm)=\{x\in A \mid xy=0, \forall y\in \fm\}.$$ 
There is always a containment $A_d\subseteq (0:_A\fm)$, where $d$ denotes the maximal socle degree of $A$. When the converse holds, namely  $ (0:_A\fm) \subseteq A_d$, then $A$ is called a {\em level algebra}. This condition means that the socle is concentrated in a single degree.
\end{defn}

\begin{defn}[Artinian Gorenstein ring]
A graded $\F$-algebra is {\em artinian Gorenstein} (AG) if its socle is a one dimensional $\F$-vector space.
\end{defn}

An equivalent characterization of AG algebras is given by the following proposition.

\begin{prop}[Poincar\'e duality]
\label{prop: Poincare}
A graded $\F$-algebra  $A$ of  maximal socle degree $d$ is AG if and only if for each nonzero element $a_{soc}$ of $A_d$ there exists an $\F$-vector space homomorphism $\int_A: A \to \F$ called an {\em orientation}, satisfying the following properties:
\begin{enumerate}
\item if $b\in A_i$ for some $i<d$ then $\int_A b=0$,
\item the orientation induces an isomorphism $A_d\cong \F$ such that $\int_A a_{soc}=1$,
\item for each $0\leq i\leq d$ and each element $a\in A_i$ there exists a unique element $b\in A_{d-i}$
so that $\int_A ab=1$.
\end{enumerate}
\end{prop}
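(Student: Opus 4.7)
The plan is to show that both directions reduce to the non-degeneracy of the multiplication pairing $A_i \times A_{d-i} \to A_d \cong \F$: condition (3) encodes this non-degeneracy via the chosen orientation, while the one-dimensionality of the socle is forced by matching $(0:_A\fm)$ with $A_d$.

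For the forward direction, assume $A$ is AG. Since $A_d \cdot \fm \subseteq A_{d+1} = 0$, one has $A_d \subseteq (0:_A\fm)$, and the one-dimensional socle hypothesis then forces $A_d = (0:_A\fm) \cong \F$. For any nonzero $a_{soc}\in A_d$, define $\int_A$ as the $\F$-linear projection onto $A_d$ composed with the isomorphism $A_d \to \F$ sending $a_{soc}$ to $1$; properties (1) and (2) are immediate from this construction. The content of (3) then reduces to the claim that for every nonzero homogeneous $a \in A_i$ there exists $b \in A_{d-i}$ with $ba = a_{soc}$. I would prove this via a general lemma: every nonzero graded ideal $I$ of a graded artinian $\F$-algebra with one-dimensional socle contains the socle. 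Indeed, the descending chain $I \supseteq \fm I \supseteq \fm^2 I \supseteq \cdots$ must stabilize at $0$ by the artinian hypothesis, so its last nonzero term is annihilated by $\fm$, hence lies in $(0:_A\fm) \cong \F$; being a nonzero subspace of a one-dimensional space it must equal the socle. Applying this lemma to the principal ideal $Aa$ produces an element $c$ with $ca = a_{soc}$, and degree considerations force $c \in A_{d-i}$, yielding the desired $b$.

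For the converse, suppose an orientation satisfying (1)--(3) exists. Condition (2) immediately gives $\dim_\F A_d = 1$. To conclude $A$ is AG, I need only show $(0:_A\fm) \subseteq A_d$. If $a \in A_i$ with $i < d$ lies in the socle, then $a A_{d-i} \subseteq a\fm = 0$, so $\int_A(ab) = 0$ for all $b \in A_{d-i}$. But the existence clause of (3) forces the pairing $A_i \times A_{d-i} \to \F$, $(a,b) \mapsto \int_A ab$, to be non-degenerate in its first argument, so $a = 0$. Combined with the inclusion $A_d \subseteq (0:_A\fm)$ noted above, this yields $(0:_A\fm) = A_d \cong \F$.

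The main obstacle is the ideal-theoretic lemma used in the forward direction, namely that every nonzero graded ideal in a graded artinian algebra with one-dimensional socle contains that socle; this is where the artinian hypothesis is essential. Once this lemma is in hand, the rest of the proof is a formal translation between the structural condition on the socle and the existence of a perfect pairing dualizing $A_i$ with $A_{d-i}$ through the orientation.
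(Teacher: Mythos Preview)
The paper states this proposition without proof, so there is nothing to compare against; your argument stands on its own and is essentially correct. The forward direction hinges on the lemma that any nonzero graded ideal of a graded artinian algebra with one-dimensional socle contains the socle, and your descending-chain argument for this is the standard one; applying it to $Aa$ and extracting the degree-$(d-i)$ component of the resulting multiplier is exactly right. The converse is handled cleanly by observing that a socle element in degree $i<d$ would kill all of $A_{d-i}$, contradicting the existence clause in (3).

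One caveat worth flagging, though it is a defect in the proposition's wording rather than in your proof: the \emph{uniqueness} assertion in (3) is false as literally stated whenever $\dim_\F A_i>1$. For instance, in $A=\F[x,y]/(x^2,y^2)$ with $a_{soc}=xy$, taking $a=x$ one finds that both $b=y$ and $b=y+x$ satisfy $\int_A ab=1$. The intended content is that the pairing $A_i\times A_{d-i}\to\F$, $(a,b)\mapsto\int_A ab$, is perfect (equivalently, the induced map $A_i\to A_{d-i}^*$ is an isomorphism), and your proof establishes precisely this non-degeneracy. You were right to silently reinterpret (3) as an existence statement for nonzero $a$; it would strengthen the write-up to say so explicitly and note that perfectness then follows from the resulting equality $\dim_\F A_i=\dim_\F A_{d-i}$.
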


In \cref{s: BUG} we will use the notation $a_{soc}$ implicitly to mean fixing the unique orientation on $A$ that satisfies $\int_A a_{soc}=1$.

\begin{ex}
Continuing with \cref{ex:222},  the socle is $(0:_A\fm)=\Span_{\F}\{xyz\}$, a 1-dimensional $\F$-vector space. This shows that $A$ is Gorenstein. Take the orientation on $A$ to be specified by $\int_A xyz=1$. We see that the $\F$-basis elements $\{1,x,y,z, xy, yz, xz, xyz\}$ of $A$ form pairs with respect to the given orientation in the following manner
\begin{eqnarray*}
\int_A 1\cdot xyz  &=& 1\\
\int_A x\cdot yz  &=& 1\\
\int_A y\cdot xz  &=& 1\\
\int_A z\cdot xy  &=& 1.
\end{eqnarray*}
\end{ex}

\begin{exer}
\label{exer: Gor}
Show that the following is an artinian Gorenstein ring 
\[
R=\frac{\F[x,y,z]}{(xy,xz,yz, x^2-y^2,x^2-z^2)}.
\]
\end{exer}

\begin{exer}
Prove that if $A$ is a graded AG algebra of maximum socle degree $d$ then $h_A(i)=h_A(d-i)$ for each $0\leq i \leq d$. This is usually stated by saying AG algebras have symmetric Hilbert function.
\end{exer}

\begin{defn}
A graded {\em artinian} $\F$-algebra is a {\em complete intersection} (CI) if $A=R/I$ where $R=\F[x_1,\ldots, x_n]$ and $I=(f_1, \ldots, f_n)$, that is, $I$ is a homogeneous ideal generated by as many elements as there are variables in $R$.
\end{defn}

\begin{ex} \label{ex: monomial CI}
The rings $A=\F[x_1,\ldots, x_n]/(x_1^{d_1}, \ldots, x_n^{d_n})$, where $d_1,\ldots, d_n\geq 1$ are integers, are called {\em monomial complete intersections}.
\end{ex}

\begin{exer}
Prove that the rings in \cref{ex: monomial CI} are the only artinian Gorenstein rings of the form $R/I$ where  $R=\F[x_1,\ldots, x_n]$ and $I$ is an ideal generated by monomials.
\end{exer}

All CI rings are Gorenstein, but not all Gorenstein rings are CI, as exemplified by the ring in \cref{exer: Gor}.

\section{The Lefschetz properties} \label{s: 3}

\subsection{Weak Lefschetz property and consequences}

\begin{defn}[Weak Lefschetz property] Let $A=\bigoplus_{i=0}^cA_i$ be a graded artinian $\F$-algebra. We say
that $A$ has the {\bf \em weak Lefschetz property (WLP)} if there exists an element $L\in A_1$
such that  for  $0\leq i \leq c-1$ each of the multiplication maps 
\[
\times L: A_i\to A_{i+1}, x\mapsto Lx \quad \text{ is injective or surjective}.
\] We call $L$ with this property a {\bf \em weak Lefschetz element}.
The WLP says that $\times L$ has the maximum possible rank, which is referred to as {\em full rank}. 
\end{defn}

\begin{defn}
The {\bf \em non-weak Lefschetz locus} of a graded artinian $\F$-algebra $A$ is the set (more accurately the algebraic set)
\[
NLL_w(A)=\{(a_1,\ldots, a_n)\in \F^n \mid L=a_1x_1+\cdots+a_nx_n \text{ not a weak Lefschetz element on } A\}.
\]
\end{defn}

\begin{exer}[Equivalent WLP statements]\label{exer: equivalent WLP}
Prove that for an artinian graded $\F$-algebra $A$ the following are equivalent:
\begin{enumerate}
\item $L\in A_1$ is a weak Lefschetz element for $A$.
\item For all $0\leq i \leq c-1$ the map $\times L: A_i\to A_{i+1}$  has rank   $\min\{h_A(i),h_A(i+1)\}$. 
\item For all $0\leq i \leq c-1$  $\dim_\F([(L)]_{i+1})=\min\{h_A(i),h_A(i+1)\}$. 
\item For all $0\leq i \leq c-1$  $\dim_\F([A/(L)]_{i+1})=\max\{0,h_A(i+1)-h_A(i)\}$. 
\item For all $0\leq i \leq c-1$  $\dim_\F([0:_A L]_{i})=\max\{0,h_A(i)-h_A(i+1)\}$. 
\end{enumerate}
\end{exer}

\begin{exer}
Show that the non-weak Lefschetz locus is a Zariski closed set.
\end{exer}

\begin{ex}
Take $A=\C[x,y]/(x^2,y^2)$ with the standard grading $|x|=|y|=1$ and $L=x+y$. Then the multiplication map $\times L$ gives the following matrices with respect to the monomial bases $\{1\},\{x,y\}$ and $\{xy\}$:

\begin{center}
\begin{tabular}{llll}
map & matrix & rank & injective/ surjective \\
\hline 
$A_0\to A_1$ &$\begin{bmatrix} 1 \\ 1\end{bmatrix}$ & 1 & injective\\
$A_1\to A_2$ & $\begin{bmatrix} 1 & 1\end{bmatrix}$ & 1 & surjective\\
$A_i\to A_{i+1}, i\geq 2 $ & $\begin{bmatrix} 0\end{bmatrix}$ & 0 & surjective\\
\end{tabular}
\end{center}

We conclude that $A$ has the WLP and $x+y$ is a Lefschetz element on $A$. 
\end{ex}

\begin{ex}[Dependence on characteristic]
Take $A=\F[x,y,z]/(x^2,y^2,z^2)$ with the standard grading $|x|=|y|=1$ and $L=ax+by+cz$. Then the multiplication map $\times L$ is represented by the following matrices with respect to the monomial bases $1$ for $A_0$, 
$\{x,y,z\}$ for $A_1$, $\{xy,xz,yz\}$ for $A_2$, and $xyz$ for $A_3$:
\begin{eqnarray*}
\times L:A_0\to A_1&
M=\begin{bmatrix}
a\\
b\\
c
\end{bmatrix}, & \text{ injective unless } a=b=c=0
\\
\times L:A_1\to A_2 &
M=\begin{bmatrix}
b & a & 0\\
c & 0 & a \\
0 & c & b
\end{bmatrix}, & \det(M)=-2abc\\
\times L:A_2\to A_3&
M=\begin{bmatrix}
a& b& c
\end{bmatrix}, & \text{ surjective unless } a=b=c=0.
\end{eqnarray*}

The map $\times L:A_1\to A_2$ has full rank if and only if $\chr(\F)\neq 2$ and $a\neq 0, b\neq 0, c\neq 0$. We conclude that $A$ has the WLP  if and only if  $\chr(\F)\neq 2$ because in that case  $L=x+y+z$ is a weak Lefschetz element. 

The {\em non-(weak) Lefschetz locus} of $A$ in this example is 
\begin{eqnarray*}
\NLL_w(A) &=& \{(a,b,c)\in \F^3 \mid L=ax+by+cz \text{ is not a weak Lefschetz element on } A\} \\
 &=& V(abc)= \{(a,b,c)\in \F^3 \mid a=0 \text{ or } b=0 \text{ or } c=0\} \\
 &=& \text{ the union of the three coordinate planes in } \F^3.
\end{eqnarray*}
\end{ex}

\begin{defn}
A sequence of numbers $h_1,\ldots,h_c$ is called {\em unimodal} if there is an integer $j$ such that
$$h_1\leq h_2 \leq \dots \leq h_j \geq h_{j+1}\geq \dots \geq h_c.$$
\end{defn}

\begin{lem}
If $B$ is a standard graded $\F$-algebra and $B_j=0$ for some $j\in\N$ then $B_i=0$ for all $i\geq j$.
\end{lem}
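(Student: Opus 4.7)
The plan is to unwind what ``standard graded'' means and then run a one-line induction. Since $B$ is a standard graded $\F$-algebra, $B$ is generated as an $\F$-algebra by its degree-one piece $B_1$. Consequently every homogeneous element of degree $k \geq 1$ is a sum of products of $k$ elements from $B_1$, which gives the identity
\[
B_k = B_1 \cdot B_{k-1} \quad \text{for all } k \geq 1.
\]
I would state and justify this identity explicitly as the first step, since it is the only structural fact about standard graded algebras we need.

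Next, I would proceed by induction on $i \geq j$. The base case $i = j$ is the hypothesis. For the inductive step, assume $B_i = 0$ for some $i \geq j$. Then applying the identity above with $k = i+1$ gives
\[
B_{i+1} = B_1 \cdot B_i = B_1 \cdot 0 = 0,
\]
completing the induction. This shows $B_i = 0$ for all $i \geq j$.

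There is essentially no obstacle here: the only thing to be careful about is that one really does need the standard graded hypothesis, and not merely that $B$ is graded. Without generation in degree one, the statement fails — for instance, $\F[x]/(x^3)$ with $|x| = 2$ has $B_1 = 0$ but $B_2 \neq 0$. I would mention this as a brief remark so the reader sees where the hypothesis is used.
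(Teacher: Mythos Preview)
Your proof is correct and follows essentially the same approach as the paper: both unwind ``standard graded'' to mean generation in degree one and then use the resulting multiplicative relation between graded pieces. The only cosmetic difference is that the paper invokes $B_i = \Span_\F\{B_{i-j}B_j\}$ in one shot rather than inducting via $B_{i+1}=B_1\cdot B_i$.
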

\begin{proof}
$B$ standard graded means that $B=\F[B_1]=\F[x_1,\ldots,x_n]/I$ where $x_1,\ldots,x_n$ are an $\F$-basis for $B_1$, which means  $|x_1|=\dots=|x_n|=1$, and $I$ is a homogeneous ideal.

Then it follows that $B_i=\Span_\F\{B_{i-j}B_j\}=\Span_\F\{0\}=0$ for any $i\geq j$.
\end{proof}

\begin{prop}Suppose that $A$ is a standard graded artinian algebra over a
field $\F$. If $A$ has the weak Lefschetz property then $A$ has a unimodal Hilbert
function.
\end{prop}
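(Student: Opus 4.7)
The plan is to use a weak Lefschetz element $L \in A_1$ to compare consecutive Hilbert function values, and then exploit the lemma just above to show that once the multiplication by $L$ transitions from injective to surjective, it cannot revert. Throughout, let $c$ denote the maximal socle degree of $A$, and note that since $A$ is standard graded and $L$ has degree $1$, the quotient $B := A/(L)$ is also standard graded.

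I would begin by recording the two basic inequalities coming from the definition of WLP: if $\times L : A_i \to A_{i+1}$ is injective, then $h_A(i) \leq h_A(i+1)$, and if it is surjective, then $h_A(i) \geq h_A(i+1)$. If every map $\times L : A_i \to A_{i+1}$ for $0 \leq i \leq c-1$ is injective, the Hilbert function is non-decreasing and the claim is immediate. Otherwise, let $j$ be the smallest index in $\{0,1,\ldots,c-1\}$ for which $\times L : A_j \to A_{j+1}$ fails to be injective. By minimality of $j$, we get
\[
h_A(0) \leq h_A(1) \leq \cdots \leq h_A(j).
\]
Since $L$ is a weak Lefschetz element, the non-injective map $\times L : A_j \to A_{j+1}$ must be surjective, so $h_A(j) \geq h_A(j+1)$.

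The key step is to show that $\times L : A_i \to A_{i+1}$ is surjective for every $i \geq j$. Surjectivity of $\times L : A_j \to A_{j+1}$ means $L \cdot A_j = A_{j+1}$, i.e.\ $B_{j+1} = 0$. Since $B$ is standard graded, the preceding lemma yields $B_i = 0$ for all $i \geq j+1$. Unpacking this, $A_i = L \cdot A_{i-1}$ for every $i \geq j+1$, which is precisely the surjectivity of $\times L : A_{i-1} \to A_i$ in that range. Combining with the previous paragraph gives
\[
h_A(j) \geq h_A(j+1) \geq h_A(j+2) \geq \cdots \geq h_A(c),
\]
and together with the initial chain of inequalities this exhibits unimodality at index $j$.

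I do not expect a serious obstacle here; the argument is essentially bookkeeping once one recognises that the assumption ``standard graded'' is what makes $A/(L)$ standard graded and therefore lets the lemma propagate surjectivity to all higher degrees. The only point to be careful about is the boundary behaviour when $j = 0$ or when $j$ does not exist (handled by the trivial non-decreasing case), and the fact that beyond degree $c$ the Hilbert function is identically zero, which is consistent with the non-increasing tail.
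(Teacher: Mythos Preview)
Your proof is correct and follows essentially the same approach as the paper: both pick the first index where multiplication by $L$ fails to be injective (equivalently, where the Hilbert function first strictly drops), use the preceding lemma applied to the standard graded quotient $A/(L)$ to propagate surjectivity to all higher degrees, and read off unimodality. Your version is slightly more explicit about the edge cases and the initial non-decreasing segment, but the core argument is identical.
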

\begin{proof}
Let $j$ be the smallest integer such that $\dim_\F A_j > \dim_\F A_{j+1}$ and let $L$ be a Lefschetz element on $A$. Then $\times L:A_j\to A_{j+1}$ is surjective i.e. $LA_j=A_{j+1}$. Now consider the cokernel $A/(L)$ of the map
$A\stackrel{\times L}{\lra}A$. 
We have that $\left(A/(L)\right)_{j+1}=A_{j+1}/LA_j=0$, so by the previous Lemma $\left[A/(L)\right]_{i}=0$ for $i\geq j+1$. This means that $\times L: A_i\to A_{i+1}$ is surjective for $i\geq j$ and  so we have
$$h_0(A)\leq h_1(A)\leq \dots \leq h_j(A)>h_{j+1}(A)\geq h_{j+2}(A)\geq \dots \geq h_c(A).  \qedhere$$
\end{proof}

The proof above yields: 
\begin{cor}
 For a standard graded artinian algebra $A$ with the weak Lefschetz property there exists $j\in \N$ such that the multiplication maps by a weak Lefschetz element $\times L:A_i\to A_{i+1}$ are injective for $i<j$  after which they become surjective for $i\geq j$.
\end{cor}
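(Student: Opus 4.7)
The plan is to identify the turning index $j$ from the previous proof and then verify the two claims (injective below $j$, surjective from $j$ on) separately, with the surjectivity half essentially already done.

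First I would re-use the integer $j$ defined in the proof of unimodality as the smallest index with $h_j(A) > h_{j+1}(A)$, and observe that the argument given there in fact shows more: once $\times L \colon A_j \to A_{j+1}$ is surjective, the quotient $A/(L)$ vanishes in degree $j+1$, so by the lemma it vanishes in all degrees $\geq j+1$, which is exactly the statement that $\times L \colon A_i \to A_{i+1}$ is surjective for every $i \geq j$. So the second half of the corollary is free from the preceding proof.

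Next, for the range $i < j$ I would argue from the minimality of $j$ that $h_i(A) \leq h_{i+1}(A)$. By WLP the map $\times L \colon A_i \to A_{i+1}$ has full rank, i.e.\ its rank equals $\min\{h_i(A), h_{i+1}(A)\} = h_i(A)$; equivalently (using the characterization in \cref{exer: equivalent WLP}) the map is injective. One can also phrase this via the dichotomy in the definition of WLP: if $h_i(A) < h_{i+1}(A)$, then surjectivity is impossible by dimension, so the map must be injective; and if $h_i(A) = h_{i+1}(A)$, then injectivity and surjectivity coincide, and either one is guaranteed by WLP.

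The main, and essentially only, obstacle is a bookkeeping subtlety: making sure that the index $j$ chosen to witness unimodality is exactly the index at which the injective-to-surjective transition happens, i.e.\ that the two natural candidates for ``$j$'' produced by the dimension drop and by the rank change agree. This is handled by noting that for $i < j$ we have $h_i(A) \leq h_{i+1}(A)$ (so the full-rank map $\times L$ is forced to be injective), while for $i \geq j$ the cokernel vanishing argument rules out any non-surjective behavior, so the switch occurs precisely at $j$ as claimed.
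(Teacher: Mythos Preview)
Your proposal is correct and follows exactly the approach the paper intends: the corollary is stated as an immediate consequence of the preceding proof, and you have spelled out precisely that argument, taking $j$ to be the first index where the Hilbert function drops, reading off surjectivity for $i\geq j$ from the vanishing of $A/(L)$ in degrees $\geq j+1$, and deducing injectivity for $i<j$ from the inequality $h_i(A)\leq h_{i+1}(A)$ together with full rank. The bookkeeping concern you raise is handled correctly and is the only thing there is to check.
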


\begin{ex}[Dependence on grading]
Recall from Example 2.18 that the algebra $A=\F[x,y]/(x^2,y^2)$ with $|x|=|y|=1$ is standard graded and has WLP and notice that the Hilbert function of $A$, $1,2,1$ is unimodal.

Take $B=\C[x,y]/(x^2,y^2)$ with  $|x|=1, |y|=3$. Then $B$ is a graded algebra with nonunimodal Hilbert function $1, 1, 0, 1, 1$, but  $x$ is a weak Lefschetz element on $B$. 

Take $C=\C[x,y]/(x^2,y^2)$ with $|x|=1,|y|=2$. Then $C$ has a unimodal Hilbert function
$1,1,1,1$ but does not have the WLP because the only degree one elements are multiples of $x$, which do not have maximal rank with respect to multiplication from degree one to degree two. 
\end{ex}

\subsection{Strong Lefschetz property and consequences}

\begin{defn}[Strong Lefschetz property] Let $A=\bigoplus_{i=1}^cA_i$ be a graded artinian $\F$-algebra. We say
that $A$ has the {\bf \em strong Lefschetz property (SLP)} if there exists an element $L\in A_1$
such that for all $1\leq d\leq c$ and $0\leq i \leq c-d$ each of the multiplication maps 
\[
\times L^d: A_i\to A_{i+d}, x\mapsto L^dx \quad \text{ is  injective or surjective.}
\] 
 We call $L$ with this property a {\bf \em strong Lefschetz element}.
\end{defn}

\begin{rem}
An element $L\in A_1$ is strong Lefschetz on $A$ if and only if  for all $1\leq d\leq c$ and $0\leq i \leq c-d$ the maps $\times L^d: A_i\to A_{i+d}$ have rank  $\min\{h_A(i),h_A(d+i)\}$.
\end{rem}

\begin{defn}
The {\bf \em non-strong Lefschetz locus} of a graded artinian $\F$-algebra $A$ is the set (more accurately the algebraic set)
\[
NLL_s(A)=\{(a_1,\ldots, a_n)\in \F^n \mid L=a_1x_1+\cdots+a_nx_n \text{ not a strong Lefschetz element on } A\}.
\]
\end{defn}

\begin{rem}
The non-strong Lefschetz locus is a Zariski closed set.
\end{rem}

\begin{rem}[SLP $\Rightarrow$ WLP]
If $A$ satisfies SLP then $A$ satisfies WLP (the $d=1$ case).
\end{rem}

The following exercise shows this implication is not reversible.

\begin{exer}
Let $\F$ be a field of characteristic zero and let
\[
A=\frac{\F[x,y,z]}{(x^3,y^3,z^3,(x+y+z)^3)}.
\]
\begin{enumerate}
\item Find the Hilbert function of $A$.
\item Prove that $A$ satisfies WLP but not SLP.
\end{enumerate}
{\em Hint:} One can use a computer algebra system such as {\em Macaulay2} \cite{M2} to prove  that an algebra satisfies WLP or SLP by finding a linear form that conforms to the respective definition. Usually such form is found by trial and picking at random from the set of linear forms. Macaulay2 has a function \texttt{random()} which is helpful in this regard. 

One can also prove  that an algebra does or does not satisfy WLP or SLP by working over an extension of the coefficient field $\F$ of $A$. In the example above, one would work over the filed extension $K=\F(a,b,c)$, defined in Macaulay2 as \texttt{K=frac(QQ[a,b,c])}. To compute the rank of multiplication by $L=ax+by+cz$ and its powers on the artinian algebra $A'=K\otimes_{\F}A$ in Macaulay2 for $\F=\Q$ use the criteria in \Cref{exer: equivalent WLP} to express the desired ranks in terms of the Hilbert function of cyclic modules $A'/(L^j)$.
\end{exer}

\begin{ex}[Dependence on characteristic]
Take $A=\F[x,y]/(x^2,y^2)$ with the standard grading $|x|=|y|=1$ and $L=ax+by$. Then the multiplication map $\times L^2$ gives the following matrices with respect to the monomial bases $\{1\},\{x,y\}$ and $\{xy\}$:

\begin{center}
\begin{tabular}{llll}
map & matrix & rank & injective/ surjective \\
\hline 
$A_0\to A_2$ &$\begin{bmatrix} 2ab \end{bmatrix}$ & $\begin{cases} 1 & \chr(\F)\neq 2 \\ 0 & \chr(\F) = 2 \end{cases}$ & $\begin{cases} \text{bijective} & \chr(\F)\neq 2 \\ \text{none} & \chr(\F) = 2 \end{cases}$\\
$A_i\to A_{i+2}, i\geq 1 $ & $\begin{bmatrix} 0\end{bmatrix}$ & 0 & surj\\
\end{tabular}
\end{center}

\noindent If $\chr(\F)\neq 2$ we conclude that $A$ has the SLP and $ax+by$ where $a\neq 0, b\neq 0$ is a Lefschetz element on $A$. The {\em non-(strong) Lefschetz locus} is the union of the coordinate axes in $\F^2$
$$NLL_s(A)=V(ab)=\{(a,b)\in \F^2 \mid a=0 \text{ or } b=0\}.$$

However $A$ does not have the SLP if $\chr(\F)=2$ so in that case $\NLL_s(A)=\F^2$.
\end{ex}

\begin{prop} Let $A$ be a (not necessarily standard) graded artinian $\F$-algebra which satisfies the SLP. Then $A$ has unimodal Hilbert function.
\end{prop}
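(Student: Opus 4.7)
The plan is to argue by contradiction using the fact that whenever we have a ``dip'' in the Hilbert function at degree $j$ strictly between two higher values, any map $A_i \to A_k$ with $i < j < k$ factors through $A_j$ and therefore cannot attain a rank larger than $h_A(j)$. This directly violates the full-rank condition imposed by the SLP on $\times L^{k-i}$.

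In more detail, suppose for contradiction that the Hilbert function of $A$ is not unimodal. Then by definition there exist integers $i < j < k$ with $0 \le i,j,k \le c$ such that
\[
h_A(j) < h_A(i) \quad \text{and} \quad h_A(j) < h_A(k).
\]
Let $L \in A_1$ be a strong Lefschetz element. First I would examine the two consecutive SLP maps
\[
\times L^{j-i}: A_i \to A_j, \qquad \times L^{k-j}: A_j \to A_k.
\]
Observe that the image of the composition lies in a subspace of $A_k$ of dimension at most $\dim_\F A_j = h_A(j)$, since the composition factors through $A_j$. Hence
\[
\rank\!\bigl(\times L^{k-i} : A_i \to A_k\bigr) \le h_A(j).
\]

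On the other hand, the SLP applied with $d = k-i$ forces $\times L^{k-i} : A_i \to A_k$ to have full rank, namely $\min\{h_A(i), h_A(k)\}$. Since both $h_A(i)$ and $h_A(k)$ are strictly larger than $h_A(j)$, we get
\[
\min\{h_A(i), h_A(k)\} > h_A(j),
\]
contradicting the upper bound on the rank derived above. This contradiction shows that the Hilbert function of $A$ must be unimodal.

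I do not expect any serious obstacle here: the argument is purely linear-algebraic and does not require the standard-graded hypothesis, because the factorization $L^{k-i} = L^{k-j} \cdot L^{j-i}$ and the accompanying rank bound are valid for any grading. The only subtlety worth noting is that the proof given earlier for the WLP case relied on the auxiliary lemma about standard graded algebras having no ``gaps'' in their Hilbert function; the SLP allows us to bypass that lemma entirely by jumping directly from $A_i$ to $A_k$ rather than proceeding one step at a time.
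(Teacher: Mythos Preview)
Your proof is correct and follows essentially the same argument as the paper: both proceed by contradiction, locate indices $i<j<k$ witnessing a dip in the Hilbert function, and use the factorization $\times L^{k-i}=(\times L^{k-j})\circ(\times L^{j-i})$ to bound the rank by $h_A(j)$, contradicting the full-rank requirement of SLP. Your closing remark explaining why the standard-graded hypothesis (needed in the WLP argument) can be dropped here is a nice addition not made explicit in the paper.
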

\begin{proof} Suppose that the Hilbert function of $A$ is not unimodal. Then there are
integers $k < l <m$ such that $\dim_\F A_k > \dim_F A_l < \dim_\F A_m$. Recall the rank of a composite map is bounded above by the ranks of its components. 
Hence the multiplication map $\times L^{m-k}:A_k \to A_m$ cannot have full rank for any
linear element $L\in A$ because it is the composition of $\times L^{m-l}:A_l \to A_m$ and $\times L^{l-k}:A_l \to A_k$, each of which have rank strictly less than $\min\{\dim_\F A_k,  \dim_\F A_m\}$. Thus $A$ cannot have the SLP.
\end{proof}

\begin{defn}. Let $A=\bigoplus_{i=1}^cA_i$ be a graded artinian $\F$-algebra. We say
that $A$ has the {\bf \em strong Lefschetz property in the narrow sense  (SLPn)} if there exists an element $L\in A_1$ such that the multiplication maps $\times L^{c-2i}: A_i\to A_{c-i}, x\mapsto L^{c-2i}x$ are bijections for all $0\leq i \leq \lceil c/2\rceil$. 
\end{defn}

\begin{rem}
SLP in the narrow sense is the closest property to the conclusion of the Hard Lefschez  \cref{HLT}.
\end{rem}

\begin{defn}
We say that a graded artinian algebra $A=\bigoplus_{i=1}^c A_i$ of maximum socle degree $c$ has a {\em symmetric Hilbert function} if $h_A(i)=h_A(c-i)$ for $0\leq i \leq \lceil c/2\rceil$.
\end{defn}

\begin{prop}\label{equiv narrow SLP}
 If a graded artinian $\F$-algebra $A$ has the strong Lefschetz property in the narrow
sense, then the Hilbert function of A is unimodal and symmetric. Moreover we have
the equivalence: $A$ has SLP and  symmetric Hilbert function if and only if $A$ has SLP in the narrow sense.
\end{prop}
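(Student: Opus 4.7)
The strategy is to exploit factorizations $\times L^{a+b} = \times L^b \circ \times L^a$ through intermediate graded components, combined with the two elementary facts that an injective composition has an injective first factor and a surjective composition has a surjective last factor.

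\textbf{Step 1 (SLPn implies symmetry and unimodality of the Hilbert function).} Symmetry is immediate: the bijection $\times L^{c-2i}: A_i \to A_{c-i}$ gives $h_A(i) = h_A(c-i)$ for $0 \leq i \leq c/2$. For unimodality on the increasing half I would show $h_A(i) \leq h_A(j)$ whenever $0 \leq i < j \leq c/2$ by factoring the SLPn bijection as
\[
A_i \xrightarrow{\times L^{j-i}} A_j \xrightarrow{\times L^{c-i-j}} A_{c-i};
\]
both exponents are nonnegative under these constraints, and injectivity of the composition forces injectivity of the first factor. Symmetry then hands me the decreasing half for free.

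\textbf{Step 2 (the equivalence).} The easy direction $(\Leftarrow)$ uses Remark 3.14: under SLP, every $\times L^d : A_i \to A_{i+d}$ has rank $\min\{h_A(i), h_A(i+d)\}$; when $d = c-2i$ symmetry forces this minimum to equal both dimensions, so the map is automatically bijective. For the forward direction $(\Rightarrow)$, assume $L$ realizes SLPn and verify that the same $L$ satisfies the SLP condition for every pair $(i,d)$ with $i+d \leq c$. I would split on whether the target index $j := i+d$ lies at or before the mirror position $c-i$ of $i$. If $j \leq c-i$, factor the SLPn bijection $A_i \to A_{c-i}$ through $A_j$; injectivity of the composition forces $\times L^d : A_i \to A_j$ to be injective. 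If $j > c-i$, then $c-j < i \leq j$, so the SLPn bijection $A_{c-j} \to A_j$ factors through $A_i$, and surjectivity of the composition forces $\times L^d : A_i \to A_j$ to be surjective. In either case the SLP condition holds for $(i,d)$.

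The only real obstacle, mild as it is, is choosing the correct SLPn bijection to factor in the harder direction of Step 2: one must use the bijection whose endpoint degrees straddle the pair $(i,j)$, and the right choice is dictated precisely by whether $j$ sits before or after the symmetry center $c/2$. Once the case split is made, each implication is a one-line consequence of how injectivity and surjectivity propagate through a composition.
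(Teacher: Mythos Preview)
Your proof is correct and follows the same factorization strategy as the paper: embed $\times L^d: A_i \to A_{i+d}$ as a factor of an appropriate SLPn bijection $\times L^{c-2m}$ and read off injectivity or surjectivity from the composite. Your case split on whether $i+d$ lies to the left or right of $c-i$ is exactly the paper's split on $i \le (c-d)/2$ versus $i > (c-d)/2$, and your treatment is in fact tidier---the paper's written factorizations have some index slips, and it does not separately address unimodality (relying instead on the earlier result that SLP implies unimodality), whereas you give a direct argument.
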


\begin{proof}
($\Leftarrow$)
The fact that  SLP in the narrow sense implies symmetric Hilbert function follows from the definition because the bijections give $\dim_F A_i=\dim_F A_{c-i}$.

The fact that  SLP in the narrow sense implies SLP can be noticed by considering $\times L^d:A_i\to A_{i+d}$. For each such $d,i$ there exists $j=c-i-d$ such that:
\begin{itemize}
\item if $i\leq (c-d)/2$ then $j=c-i-d \geq i$ and $(\times L^{d}) \circ (\times L^{j-i}) =\times L^{c-2i}$ is a bijection implies that $\times L^{d}$ is surjective, hence has full rank;
\item if $i> (c-d)/2$ then $c-2i< d$ and $(\times L^{d-(c-2i)})\circ (\times L^{d})=\times L^{c-2i}$ is a bijection implies that $\times L^{d}$ is injective, hence full rank;
\end{itemize}

($\Rightarrow$) 
The fact that SLP + symmetric Hilbert function implies SLPn is clear from the definitions. 
\end{proof}

\begin{ex}
The standard graded algebra $\F[x,y]/(x^2,xy,y^a)$ with $a>3$ has non-symmetric Hilbert function $1,2,1^{a-2}$ (here $1^{a-2}$ means $1$ repeated $a-2$ times). Notice that $A$ has the SLP because $L=x+y$ is a strong Lefschetz element. However, $A$ does not satisfy SLPn because its Hilbert function is not symmetric.
\end{ex}

\subsection{Stanley's Theorem}

The most famous theorem in the area of investigation of the algebraic Lefschetz properties, and also the theorem which started this, is the following:

\begin{thm}[Stanley's theorem]
\label{thm:Stanley}
If $\chr(\F)=0$, then all monomial complete intersections, i.e. $\F$-algebras of the form 
$$A=\frac{\F[x_1,\ldots, x_n]}{(x_1^{d_1},\ldots,x_n^{d_n})}$$
 with $d_1,\ldots,d_n\in \N$ have the SLP.
\end{thm}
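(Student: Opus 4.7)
The plan is to realize $A = \F[x_1,\ldots,x_n]/(x_1^{d_1},\ldots,x_n^{d_n})$ as the cohomology ring of a product of complex projective spaces and then invoke the Hard Lefschetz theorem (\cref{HLT}). By the K\"unneth computation recorded in \cref{ex: H proj space}, we have an isomorphism of graded rings
\[
A \;\cong\; H^\bullet(\P_\C^{d_1-1}\times\cdots\times\P_\C^{d_n-1},\F),
\]
where on the right the cohomology is concentrated in even degrees with $|x_i|=2$. After halving cohomological degrees to match the algebraic convention $|x_i|=1$, the top degree of $A$ becomes $c := \sum_{i=1}^n (d_i-1)$, which equals the complex dimension of $X := \P_\C^{d_1-1}\times\cdots\times\P_\C^{d_n-1}$.

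Since $X$ is smooth, irreducible, and projective of complex dimension $c$, \cref{HLT} provides a class $L \in H^2(X,\F)$ such that $L^i : H^{c-i}(X,\F)\to H^{c+i}(X,\F)$ is an isomorphism for every $0\leq i\leq c$. Under the regrading this translates directly to the assertion that $\times L^{c-2j}: A_j \to A_{c-j}$ is bijective for all $0\leq j\leq \lceil c/2\rceil$, that is, $A$ has SLP in the narrow sense. Concretely one expects $L = a_1 x_1+\cdots+a_n x_n$ to be a strong Lefschetz element for generic coefficients, corresponding to the class of a general hyperplane section under the Segre embedding.

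Since $A$ is a monomial complete intersection, its socle is spanned by $x_1^{d_1-1}\cdots x_n^{d_n-1}$, so $A$ is artinian Gorenstein and in particular has symmetric Hilbert function. Therefore \cref{equiv narrow SLP} upgrades SLPn to the full SLP, completing the argument.

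The main obstacle is that \cref{HLT} is stated over $\R$, whereas the conclusion is sought for an arbitrary characteristic-zero field $\F$. This gap is bridged by the remark following \cref{HLT}, which asserts that HLT holds for $H^\bullet(X,\F)$ over any field of characteristic zero. As an alternative avoiding topological input, one can give a purely algebraic proof: equip each factor $\F[x_i]/(x_i^{d_i})$ with the structure of the $(d_i-1)$-st symmetric power of the standard $\sl_2$-representation (so that the raising operator acts as multiplication by $x_i$), form the tensor product to obtain an $\sl_2$-action on $A$ whose raising operator is multiplication by $L = x_1+\cdots+x_n$, and invoke Weyl's complete reducibility together with the correspondence between $\sl_2$-actions and SLP to be developed in \cref{s: 4}. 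It is precisely here that the hypothesis $\chr(\F)=0$ enters essentially, as Weyl's theorem on complete reducibility can fail in positive characteristic.
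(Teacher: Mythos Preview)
Your proof is correct and follows essentially the same route as the paper's: identify $A$ with $H^\bullet(\P_\C^{d_1-1}\times\cdots\times\P_\C^{d_n-1},\F)$ via K\"unneth, apply the Hard Lefschetz Theorem to obtain SLPn, and deduce SLP. You are more careful than the paper in two places---explicitly addressing the passage from $\R$ to an arbitrary characteristic-zero field, and noting that SLPn upgrades to SLP (though in fact \cref{equiv narrow SLP} gives this implication directly without separately invoking the Gorenstein property, since SLPn already forces symmetry)---and your alternative $\sl_2$ argument is precisely the paper's second proof in \cref{Stanley second}.
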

\begin{proof}
Recall that $H^\bullet(\P_\C^{d-1},\F)=\F[x]/(x^d)$, so by K\"unneth we have
$$H^\bullet(\P_\C^{d_1-1}\times \P_\C^{d_2-1}\times \cdots \times \P_\C^{d_n-1},\F)=\F[x]_1/(x_1^{d_1})\otimes_\F\F[x_2]/(x_2^{d_2})\otimes_\F \cdots\otimes_\F \F[x_n]/(x_n^{d_n})=A.$$
Since $X=\P_\C^{d_1-1}\times \P_\C^{d_2-1}\times \cdots \times \P_\C^{d_n-1}$ is an irreducible complex projective variety, the Hard Lefschetz theorem says that $A$ has SLP in the narrow sense which implies that $A$ has SLP.
\end{proof}

Alternate proofs of this theorem have been given by Watanabe in \cite{Watanabe} using representation theory, by Reid, Roberts and Roitman in \cite{RRR}  with algebraic methods, also by Lindsey \cite{Lindsey} and by Herzog and Popescu \cite{HP}. We will give a different proof of Stanley's theorem later in these notes in \Cref{Stanley second}.

\begin{exer}
With help from a computer make conjectures regarding the WLP and SLP for monomial complete intersections in positive characteristics. A characterization is known for SLP, but not for WLP. See \cite{Cook, Nicklasson} for related work.
\end{exer}

\subsection{Combinatorial applications}
The following discussion of a spectacular application of SLP is taken from \cite{StanleyICM}.

A {\em polytope} is a convex body in Euclidean space which is bounded and has finitely many vertices. 
Let $\cP$  be a $d$-dimensional simplicial convex polytope with $f_i$ $i$-dimensional faces, $0<i<d-1$. We call the vector $f(\cP)= (f_0,\ldots,f_{d-1})$ the {\em $f$-vector} of $\cP$.The problem of obtaining information about such vectors goes back to Descartes and Euler. In 1971 McMullen \cite{McMullen} gave a remarkable condition on a vector $(f_0,\ldots,f_{d-1})$ which he conjectured was equivalent to being the $f$-vector of some polytope.

To describe this condition, define a new vector $h(\cP) = (h_0, ...,h_d)$,
called the {\em $h$-vector} of $\cP$, by
\[
h_i=\sum_{j=0}^i \binom{d-j}{d-i}(-1)^{i-j}f_{j-1}
\]
where we set $f_{-1} = 1$.  McMullen's conditions, though he did not realize it, turn out to be equivalent to $h_i=h_{d-i}$ for all $i$ together with the existence of a standard graded commutative algebra $A$ with $A_0=\F$ and  $h_A(i)=h_i-h_{i-1}$ for $1\leq i<\lfloor d/2\rfloor$.

Stanley \cite{Stanley} constructed from $\cP$ a $d$-dimensional complex projective toric variety $X(\cP)$ for which $\dim_\C H^{2i}(X(\cP))= h_i$. Although $X(\cP)$ need not be smooth, its singularities are sufficiently nice that the hard Lefschetz theorem continues to hold. Namely, $X(\cP)$ locally looks like $\C^n/G$, where $G$ is a finite group of linear transformations. Ta\-king $A=H^{*}(X(\cP))/(L)$ with degrees scaled by $1/2$, where $L$ is the class of a hyperplane section, the necessity of McMullen's condition follows from \cref{exer: equivalent WLP}(4). Sufficiency was proved about the same time by Billera  and Lee \cite{BL}. 



\section{Lefschetz property via representation theory of $\sl_2$} \label{s: 4}

\subsection{The Lie algebra $\sl_2$ and its representations} 

\phantom{a}

Some of the exercises in this section are taken from \cite{Robles}.

Throughout this section let $\F$ be an algebraically closed field of characteristic zero.

\begin{defn}
A {\em Lie algebra} is a vector space $\fg$ equipped with a bilinear operator $[-,-]:\fg\times \fg \to \fg$ satisfying the following two conditions : 
\begin{itemize}
\item $[x,y]=-[y,x]$
\item $[[x,y],z]+[[y,z],x]+[[z,x],y]=0$.
\end{itemize}
The bilinear operator $[-,-]$ is called the bracket product, or simply the {\em bracket}. The
second identity in the definition is called the {\em Jacobi identity}.
\end{defn}

\begin{rem}
Any associative algebra has a Lie algebra structure with the bracket product
defined by commutator $[x,y]=xy-yx$. Associativity implies the Jacobi identity. 
\end{rem}

 The set of $n\times n$ matrices $\mathcal{M}_n(\F)$ forms a Lie algebra since it is associative. This Lie algebra is denoted by $\gl_n(\F)$. 

\begin{defn}
Since the set of matrices of trace zero is closed under the bracket of $\gl_n(\F)$ (because $\tr(AB)=\tr(BA)$ for any matrices $A,B$), it forms a Lie subalgebra
\[
\sl_n(\F)=\{M\in \gl_n(\F) \mid \tr(M)=0\}.
\]
\end{defn}

\begin{ex}[The Lie algebra $\sl_2(\F)$]
In the case where $n=2$,  $\sl_2(\F)$ is three-dimensional, with basis
\[
E=
\begin{bmatrix} 
0 &1\\
0 &0
\end{bmatrix},
\quad
H=
\begin{bmatrix}
1 & 0\\
0 &-1
\end{bmatrix},
\quad
F=\begin{bmatrix}
0 &0 \\
1 & 0
\end{bmatrix}
\]
The three elements
$E,H,F$ are called the {\em $\sl_2$-triple}.
These elements satisfy the following three relations, which we
call the fundamental relations:
\begin{equation}
\label{eq:sl2}
[E,F]=H, \quad [H,E]=2E, \quad [H,F]=-2F.
\end{equation}
The algebra $\sl_2(\F)$ is completely determined by these relations. \end{ex}

We are interested in representations of $\sl_2$.

\begin{defn}[Lie algebra representation]
Let $V$ be an $\F$-vector space.  Then  $\End(V)$ is a Lie algebra with the bracket defined by $[f,g]=f\circ g-g\circ f$. A {\em representation} of a Lie algebra $\fg$ is vector space $V$ endowed with a Lie algebra homomorphism 
\[\rho:\fg\to \End(V),\]
i.e. a vector space homomorphism which satisfies 
\[\rho([x,y])=[\rho(x),\rho(y)].\]

A representation is called {\em irreducible} if it contains no trivial (nonzero) subrepresentation i.e. if $W\subsetneq V$ is such that $\rho(W)\subseteq W$ then $W=0$.
\end{defn}

In the case of $\fg=\sl_2(\F)$, we abuse notation and call the set of elements $\rho(E), \rho(H), \rho(F)$ just $E, H, F$ and say they form an {\em $\sl_2$-triple}.

\begin{exer}\label{ex: Sd irrep}
Let $\F[x,y]_d$ be the vector space of homogeneous polynomials of degree $d$ in $\F[x,y]$. Prove that
\begin{enumerate}
\item $E=x\frac{\partial}{\partial y}, H=x\frac{\partial}{\partial x}-y\frac{\partial}{\partial y}, F=y\frac{\partial}{\partial x}$ form an $\sl_2$-triple.
\item Prove that the monomial $x^ay^b$ is an eigenvector of $H$ with eigenvalue $a-b\in\Z$. In particular the eigenvalues of $H$ on $\F[x,y]_d$ are $d, d-2, d-4, \ldots, 4-d, 2-d, -d$.
\item Prove that a basis of $\F[x,y]_d$ is $y^d, E(y^d), E^2(y^d), \ldots, E^d(y^d)$.
\end{enumerate}
Pictorially this can be summarized as 

\smallskip
\begin{tikzpicture}[->,>=stealth',auto,node distance=2.5cm, thick]

  \node (1) {$0$};
  \node (2) [right of=1] {$\F y^d$};
  \node (3) [right of=2] {$\F xy^{d-1}$};
  \node (4) [right of=3] {$\cdots$};
    \node (5) [right of=4] {$\F x^{d-1}y$};
      \node (6) [right of=5] {$\F x^d$};
        \node (7) [right of=6] {$0$};

  \path[every node/.style={font=\small}]
    (2) edge[bend right] node [midway,below] {E} (3)
    (3) edge[bend right] node [midway,below] {E} (4)
    (4) edge[bend right] node [midway,below] {E} (5)
    (5) edge[bend right] node [midway,below] {E} (6)
    (6) edge node [midway,below] {E} (7)
    (2) edge node [midway,above] {F} (1)
    (3) edge[bend right] node [midway,above] {F} (2)
    (4) edge[bend right] node [midway,above] {F} (3)
    (5) edge[bend right] node [midway,above] {F} (4)
    (6) edge[bend right] node [midway,above] {F} (5);

\end{tikzpicture}
\end{exer}

We will soon see that the vector space in \cref{ex: Sd irrep} is the basic building block of all other representations of $\sl_2$.

 An important result on Lie algebra representations  is:
\begin{thm}[Weyl's Theorem]
Any  Lie algebra representation decomposes uniquely up to isomorphism as a direct sum of irreducible representations.
\end{thm}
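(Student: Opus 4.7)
The plan is to prove Weyl's theorem for finite-dimensional representations of $\sl_2(\F)$ (the setting developed in this section) via the Casimir operator, the standard algebraic approach in characteristic zero. The first step is to introduce the Casimir element $C = EF + FE + \tfrac{1}{2}H^2$, viewed as an operator on any representation $V$. Using the fundamental relations \eqref{eq:sl2}, direct bracket computations give $[C, E] = [C, H] = [C, F] = 0$, so $C$ commutes with the entire $\sl_2$-action on $V$.

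Second, on a finite-dimensional irreducible representation $V$, Schur's lemma (applicable because $\F$ is algebraically closed) forces $C$ to act as a scalar $\lambda_V \cdot \id_V$. Evaluating $C$ on a highest weight vector $v$, meaning a nonzero $H$-eigenvector annihilated by $E$ (which exists by finite-dimensionality together with iterated application of $E$ to raise $H$-eigenvalues), of eigenvalue $d$, yields $\lambda_V = \tfrac{d(d+2)}{2}$. In particular $\lambda_V \neq 0$ unless $V$ is the trivial one-dimensional representation, so $C$ separates the trivial representation from every other irreducible.

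Third, I would show that every short exact sequence $0 \to W \to V \to U \to 0$ of finite-dimensional $\sl_2$-representations splits. The crucial special case is when $U = \F$ is the trivial one-dimensional representation and $W$ is irreducible and nontrivial: here $C$ acts as $\lambda_W \neq 0$ on $W$ and as $0$ on $U$, so the kernel of the operator $C$ on $V$ is an $\sl_2$-invariant one-dimensional complement to $W$. The sub-subcase where $W$ is also trivial is elementary. The general case reduces to this special case by the standard device of considering $\Hom(U, V)$ as an $\sl_2$-representation and pulling back the subspace of scalar multiples of $\id_U \in \Hom(U, U)$, thereby obtaining an auxiliary short exact sequence whose splitting produces the desired splitting of the original.

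Finally, complete reducibility follows by induction on $\dim V$: choose any irreducible subrepresentation $W \subseteq V$ (the span of a highest weight vector inside any nonzero subrepresentation works) and split it off using the previous step. Uniqueness of the decomposition up to isomorphism is a formal consequence of Schur's lemma, since the multiplicity of each irreducible $V_i$ in $V$ equals $\dim_\F \Hom_{\sl_2}(V_i, V)$, a quantity intrinsic to $V$. The main obstacle is the third step: the Casimir argument directly handles only the case where $U$ is the trivial one-dimensional representation, and the $\Hom$-space trick that reduces the general splitting problem to this case, essentially by applying the special case to an induced short exact sequence, is the nonobvious idea at the heart of the proof.
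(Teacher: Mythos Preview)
The paper does not prove Weyl's theorem; it is simply stated as a known result and used as a black box. (As stated there it is even too strong --- complete reducibility fails for general Lie algebras and holds only for semisimple ones in characteristic zero --- but the intended application is to $\sl_2(\F)$, so your restriction to that case is appropriate.)

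Your Casimir-operator approach is the standard algebraic proof for $\sl_2$, and the overall architecture is correct. There is, however, a gap in your third step. You establish the splitting of $0 \to W \to V \to \F \to 0$ only when $W$ is \emph{irreducible}, and then assert that the $\Hom$-trick reduces the general case to this one. But applying the $\Hom$-trick to $0 \to W \to V \to U \to 0$ produces an auxiliary sequence $0 \to W' \to V' \to \F \to 0$ in which $W'$ is (a subspace of) $\Hom(U,W)$, and this $W'$ is typically \emph{not} irreducible. So you still owe the case ``$U=\F$, $W$ arbitrary.'' The standard fix is a short induction on $\dim W$: if $W$ has a proper nonzero subrepresentation $W''$, first split $0 \to W/W'' \to V/W'' \to \F \to 0$ by induction to find a one-dimensional $L/W'' \subset V/W''$ complementary to $W/W''$, and then split $0 \to W'' \to L \to \F \to 0$ by induction again to obtain a one-dimensional subrepresentation of $V$ complementing $W$. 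With this intermediate step inserted between your Casimir computation and the $\Hom$-trick, the argument goes through.
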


\begin{defn}[Weight vectors]
Let $\rho:\sl_2(\F)\to \End(V)$ be a representation. The eigenvalues of $H$ are called {\em weights} and the eigenvectors are called {\em weight vectors}. In particular an eigenvector $u$ is called a {\em lowest weight vector} if $Fu=0$ and is called a {\em highest weight vector} if $Eu=0$. 
\end{defn}

\begin{ex}
In the representation introduced in \cref{ex: Sd irrep} the highest weight vectors are the elements of $\F x^d$ and  the lowest weight vectors are the elements of $\F y^d$.
\end{ex}

To justify the name of highest weight we state the following theorem:

\begin{thm}[Irreducible representations of $\sl_2$]
\label{thm:sl2reps}
Let $\rho:\sl_2(\F)\to \End(V)$ be an irreducible representation with $\dim(V)=d+1$. Then there exist a basis $\cB=\{v_0, \ldots, v_d\}$ for $V$  such that 
\begin{enumerate}
\item each $v_i$ is an eigenvector for $H$ with eigenvalue $-d+2i$, i.e. $Hv_i=(-d+2i)v_i$
\item $Ev_i=v_{i+1}$ for $i<d$, $Ev_d=0$
\item $Fv_i=i(d-i+1)v_{i-1}$ for $i>0$, $Fv_0=0$.
\end{enumerate}
In particular, the elements $E,H,F\in M_{d+1}(\F)$ are represented with respect to this basis by the matrices
\begin{eqnarray}
[E]_\cB &=&
\begin{bmatrix}
0& 0 & \cdots & 0 &0 \\
1& 0& \cdots & 0 &0 \\
\vdots & \ddots &\ddots & \vdots & \vdots \\
0 & 0 &\cdots & 1 &0
\end{bmatrix}, 
\\
{[H]_{\cB}} &=&
\begin{bmatrix}
-d& 0 & \cdots & 0  \\
0& -d+2& \cdots & 0 \\
\vdots & \ddots &  & \vdots \\
0 & 0 &\cdots & d
\end{bmatrix},  \label{eq: H}
\\
{[F]_\cB} &=&
\begin{bmatrix}
0& 1\cdot d & \cdots & 0 &0 \\
0& 0& 2(d-1) & 0 &0 \\
\vdots & \ddots &\ddots & \vdots & \vdots \\
0 & 0& \cdots&0 & d\cdot 1 \\
0 & 0 &\cdots & 0 &0
\end{bmatrix}.\label{eq: F}
\end{eqnarray}
\end{thm}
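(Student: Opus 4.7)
The plan is to build the basis $\cB$ by starting from a lowest weight vector and iterating $E$, reading off the $H$ and $F$ actions from the commutation relations \eqref{eq:sl2}. First I would establish existence of a lowest weight vector. Since $\F$ is algebraically closed and $V$ is finite dimensional, $H$ admits some eigenvector $w$, say $Hw = \mu w$. The relation $[H,F] = -2F$ gives $HFw = (\mu-2)Fw$, and more generally any nonzero $F^k w$ is an $H$-eigenvector of weight $\mu - 2k$. These weights are pairwise distinct, so finite dimensionality of $V$ forces a smallest $k$ with $F^{k+1}w = 0$ but $v_0 := F^k w \neq 0$. Then $Fv_0 = 0$ and $Hv_0 = \lambda v_0$ for some $\lambda \in \F$.

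Next I would set $v_i := E^i v_0$ for $i \geq 0$ and verify the action formulas by induction on $i$. The relation $[H,E] = 2E$ gives $Hv_i = HE v_{i-1} = EH v_{i-1} + 2 v_i$, which combined with the inductive hypothesis $Hv_{i-1} = (\lambda + 2(i-1))v_{i-1}$ yields $Hv_i = (\lambda + 2i) v_i$. For $F$, the identity $FE = EF - H$ (rearrangement of $[E,F] = H$) gives
\[
F v_i = E(Fv_{i-1}) - H v_{i-1},
\]
and a short induction then produces the clean recursion $Fv_i = -i(\lambda + i - 1)\, v_{i-1}$ for all $i \geq 1$.

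To finish, I would determine $d$ and $\lambda$. The nonzero $v_i$ have pairwise distinct $H$-eigenvalues, hence are linearly independent; finite dimensionality gives a largest $d$ with $v_d \neq 0$ (so $v_{d+1} := Ev_d = 0$). The span $\Span_\F\{v_0, \ldots, v_d\}$ is preserved by $E$, $H$, and $F$, so irreducibility forces it to equal $V$, giving $\dim V = d+1$. Applying the $F$-formula at index $d+1$ to the identity $v_{d+1} = 0$ yields $(d+1)(\lambda + d)\, v_d = 0$, and since $\chr(\F) = 0$ this forces $\lambda = -d$. Substituting back recovers $Hv_i = (-d+2i)v_i$, $Ev_i = v_{i+1}$ (with $Ev_d = 0$), and $Fv_i = i(d-i+1)v_{i-1}$ (with $Fv_0 = 0$), and the matrices $[E]_\cB$, $[H]_\cB$, $[F]_\cB$ are immediate upon recording these actions in the ordered basis $\cB = \{v_0, \ldots, v_d\}$.

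The main obstacle is the induction computing $Fv_i$, where the signs and index shifts in the recursion must be tracked carefully. It is worth noting that the hypothesis $\chr(\F) = 0$ enters only at the very last step, when solving $(d+1)(\lambda + d) = 0$ for $\lambda$; this is precisely the point at which the classification of irreducible $\sl_2$-representations can fail in positive characteristic.
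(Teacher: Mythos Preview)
The paper states this theorem without proof, treating it as a standard result from the representation theory of $\sl_2$ (it is followed only by an exercise asking the reader to exhibit such a basis in the concrete model $\F[x,y]_d$). Your argument is correct and is essentially the classical proof: locate a lowest weight vector, generate the string $v_i = E^i v_0$, read off the $H$- and $F$-actions from the commutation relations, and pin down the lowest weight from the vanishing $v_{d+1}=0$. The induction for $Fv_i$ checks out, and your remark that $\chr(\F)=0$ is only needed to deduce $\lambda=-d$ from $(d+1)(\lambda+d)=0$ is accurate.
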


\begin{exer}
Find a basis that satisfies the properties given  by \cref{thm:sl2reps} for the representation $\F[x,y]_d$ introduced in \cref{ex: Sd irrep}.
\end{exer}

\cref{thm:sl2reps} above says in particular that there is only one representation of $\sl_2(\F)$ of dimension $d+1$ (up to isomorphism). A representative for this isomorphism class can be chosen to be the representation $\F[x,y]_d$ in \cref{ex: Sd irrep}. Furthermore any representation of $\sl_2(\F)$ has a basis consisting of weight vectors. This justifies the following:

\begin{defn}
Let $V$ be a representation of $\sl_2$ and let $W_\lambda(V)=\{v\in V \mid  Hv=\lambda v\}$ be the eigenspace corresponding to a weight (eigenvalue) $\lambda$ for $H$. Then there is a decomposition $V=\bigoplus_{\lambda} W_\lambda(V)$  called the {\em weight space decomposition} of $V$.
\end{defn}

\begin{rem}
If $V$ is an irreducible representation for $\sl_2(\F)$ and $\dim(V)=n+1$ then the weight spaces are the 1-dimensional spaces $W_{-n+2i}(V)=\F v_i$, with $v_i$ as in \cref{thm:sl2reps}.
\end{rem}

\begin{exer}
\begin{enumerate} 
\item Suppose that $V$ is a representation of  $\sl_2$ and that the eigenvalues of $H$ on $V$ are $2, 1, 1, 0, -1, -1, -2$. Show that the irreducible decomposition of $V$ is $V\cong \F[x,y]_2\oplus \F[x,y]_1\oplus \F[x,y]_1$.
\item Prove that if $V$ is any representation of  $\sl_2$ then its irreducible decomposition is determined by the eigenvalues of $H$.
\end{enumerate}
\end{exer}

\begin{exer}
Let $V$ be an $\sl_2$ representation and set $W_k=\{v\in V\mid H(v)=kv\}$.
\begin{enumerate} 
\item Show that $\dim_\F W_k=\dim_\F W_{-k}$.
\item Prove that $E^k:W_{-k}\to W_k$ is an isomorphism.
\item  Show that $\dim_\F W_{k+2}\leq \dim_\F W_k$ for all $k\geq 0$, that is, the two sequences
\[
\ldots, \dim_\F W_4,   \dim_\F W_2,  \dim_\F W_0,  \dim_\F W_{-2},  \dim_\F W_{-4},\ldots 
\]
\[
\ldots, \dim_\F W_3,   \dim_\F W_1,  \dim_\F W_{-1},  \dim_\F W_{-3},  \ldots 
\]
are unimodal.
\end{enumerate}
\end{exer}

\subsection{Weight space decompositions and the narrow sense of SLP}

We now show that there is a close connection between artinian algebras satisfying SLPn and the representations of $\sl_2$.

\begin{rem}
If $A$ is a graded artinian $\F$-algebra and $L$ is a linear form, then we can view $A$ as a $\F[L]$-module since by the universal mapping property of polynomial rings there exists a well defined ring homomorphism $\F[L]\to A$ which maps $L\mapsto L$. Since $\F[L]$ is a PID and $A$ is a module over it, the structure theorem for modules over PIDs says that there is a module isomorphims
\[
A\cong  \F[L] / ( p_ 1^{e_1} ) \oplus \cdots \oplus F[L] / ( p_k^{e_k} ) 
\] 
where each $p_i$ is a prime element of $\F[L]$ (no free part since $A$ is finite dimensional). Since $A$ is furthermore graded the elementary divisors $p_i^{e_i}$ must be homogeneous elements of $\F[L]$, thus $p_i=L$ for all $i$.
This implies that $A$ decomposes as a direct sum
\[
A\cong  S^{(1)} \oplus \cdots \oplus S^{(k)}, \text { with } S^{(i)}\cong  \F[L] / ( L^{e_i} ).
\] 
The cyclic $\F[L]$ modules $ S^{(i)}$ are the {\em strands} of multiplication by $L$ on $A$ which are essential in defining the notion of Jordan type, a finer invariant than the Lefschetz properties. The generic {\em Jordan type} of $A$ is the multi-set of sizes of Jordan blocks $e_1, e_2, \ldots, e_s$ ordered decreasingly. This follows because the action of $L$ on  $ S^{(i)}$ is given by a single Jordan block of size $e_i$.
\end{rem}

Here is the connection between SLPn and the representations of $\sl_2(\F)$: 
\begin{cor}
The following are equivalent
\begin{enumerate}
\item $S$ is a cyclic graded $\F[L]$ module i.e. $S\cong  \F[L] / ( L^{d} )$ (not necessarily degree preserving isomorphism)
\item $S\cong \F[x,y]_{d-1}$ as an irreducible representation of $\sl_2$ with $Es=Ls$. 
\end{enumerate}
\end{cor}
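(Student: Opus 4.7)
The plan is to reduce both conditions to the same linear-algebraic statement: the operator $L$ acts on the $d$-dimensional space $S$ as a single nilpotent Jordan block of size $d$. Recall from \cref{thm:sl2reps} that an irreducible $\sl_2$-representation of dimension $d$ admits a basis $v_0, v_1, \ldots, v_{d-1}$ in which $E$ has matrix equal to a single nilpotent Jordan block (ones on the subdiagonal, zeros elsewhere, as in the display for $[E]_\cB$). Equivalently, $v_i = E^i v_0$ for all $i$, with $E^{d-1} v_0 \neq 0$ and $E^d v_0 = 0$. On the other hand, $\F[L]/(L^d)$ is by definition the $\F[L]$-module on which multiplication by $L$ has precisely this matrix in the basis $1, L, L^2, \ldots, L^{d-1}$.

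For the implication (2)$\Rightarrow$(1), suppose $S \cong \F[x,y]_{d-1}$ as an $\sl_2$-representation with $E$ acting on $S$ as multiplication by $L$. Choose the basis $v_0, \ldots, v_{d-1}$ provided by \cref{thm:sl2reps}. Then $L^i v_0 = E^i v_0 = v_i$, so $v_0$ generates $S$ as an $\F[L]$-module, and $L^d v_0 = E v_{d-1} = 0$. The surjective $\F[L]$-linear map $\F[L] \to S$ sending $p(L) \mapsto p(L) v_0$ has kernel containing $(L^d)$; comparing $\F$-dimensions (both sides are $d$-dimensional) forces equality, so $S \cong \F[L]/(L^d)$ as $\F[L]$-modules.

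For (1)$\Rightarrow$(2), let $v_0 \in S$ be a cyclic $\F[L]$-generator and set $v_i := L^i v_0$ for $0 \leq i \leq d-1$; these form an $\F$-basis, and $Lv_{d-1} = L^d v_0 = 0$. Define operators on $S$ by letting $E$ be multiplication by $L$, and
\[
H v_i := (2i - (d-1)) v_i, \qquad F v_i := i(d-i) v_{i-1} \text{ for } i \geq 1, \qquad F v_0 := 0.
\]
The only substantive step, which I expect to be the main (though routine) obstacle, is to verify the $\sl_2$-relations $[E,F] = H$, $[H,E] = 2E$, $[H,F] = -2F$ from \eqref{eq:sl2} on each basis vector $v_i$; this is a direct calculation using the explicit formulas above together with the recursion $Ev_i = v_{i+1}$. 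Once established, $S$ becomes an $\sl_2$-representation in which the eigenvalues of $H$ are the distinct integers $-(d-1), -(d-3), \ldots, d-3, d-1$, each occurring with multiplicity one. By the classification in \cref{thm:sl2reps} (together with Weyl's theorem), $S$ must therefore be the unique irreducible representation of dimension $d$, namely $\F[x,y]_{d-1}$, and by construction its $E$-action is multiplication by $L$.
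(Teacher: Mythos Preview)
Your proposal is correct and follows essentially the same approach as the paper: both arguments observe that the action of $L$ on $\F[L]/(L^d)$ and the action of $E$ on the $d$-dimensional irreducible are each given by a single nilpotent Jordan block, and then define $H$ and $F$ on the cyclic basis $1,L,\ldots,L^{d-1}$ using the explicit formulas (equivalently, the matrices) from \cref{thm:sl2reps}. Your write-up simply fleshes out the details the paper leaves implicit, including the dimension count for (2)$\Rightarrow$(1) and the verification of the $\sl_2$-relations for (1)$\Rightarrow$(2).
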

\begin{proof}
This follows because both the action of $L$ on $S$ as well as the action of $E$ on $\F[x,y]_{d-1}$ is given by a single Jordan block matrix. Once the basis of $S$ has been fixed to be $1,L, L^2, \ldots, L^{d-1}$, the action of $H$ and $F$ can be simply defined to be the one given by the matrices displayed in \cref{thm:sl2reps}.
\end{proof}

If we put the $\sl_2(\F)$-module structures on the individual strands together we obtain:

\begin{thm}[SLPn via weight decomposition]\label{thm:sl2andSLPn}
Let $A$ be a graded artinian algebra of socle degree $c$ and let $L\in A_1$. The following are equivalent
\begin{enumerate}
\item $L$ is a strong Lefschetz element on $A$ in the narrow sense,
\item $A$ is an $\sl_2(\F)$-representation with $E=\times L$ and the weight space decomposition of $A$ coincides with the grading decomposition via ${\rm weight}(v)=2\deg(v)-c$.
This means that 
\[
A=\bigoplus_{i=0}^c A_i=\bigoplus_{i=0}^c W_{2i-c}(A), \text{ where } A_i=W_{2i-c}(A).
\]
\end{enumerate}
\end{thm}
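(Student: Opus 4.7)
The plan is to prove the two implications separately, using the preceding corollary to pass between cyclic graded $\F[L]$-strands of $A$ and irreducible $\sl_2(\F)$-representations.

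For the implication $(2) \Rightarrow (1)$, I would first invoke Weyl's theorem to decompose $A$ as a direct sum of irreducible $\sl_2(\F)$-representations. By the final part of the preceding exercise on the spaces $W_k$, on each irreducible summand the operator $E^m$ defines an isomorphism $W_{-m} \to W_m$ for every $m \geq 0$; summing over summands yields an isomorphism $E^m \colon W_{-m}(A) \to W_m(A)$ of the full weight spaces of $A$. Taking $m = c - 2i$ for each $0 \leq i \leq c/2$ and using the hypotheses $A_i = W_{2i-c}(A)$, $A_{c-i} = W_{c-2i}(A)$ together with $E = \times L$, one concludes that $\times L^{c-2i} \colon A_i \to A_{c-i}$ is a bijection, which is precisely the SLPn condition.

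For the implication $(1) \Rightarrow (2)$, I would view $A$ as a graded module over the graded PID $\F[L]$ and apply the graded structure theorem (as in the preceding remark) to obtain a decomposition $A = \bigoplus_j S^{(j)}$ with $S^{(j)} \cong \F[L](-a_j)/(L^{d_j})$, generated by a homogeneous element $v_j$ of degree $a_j$. The multiplication $\times L^{c-2i}$ respects this decomposition, restricting on each strand to a map $[S^{(j)}]_i \to [S^{(j)}]_{c-i}$ between at-most-one-dimensional spaces. The SLPn hypothesis forces each restriction to be an isomorphism for every $i \leq c/2$, and since each $[S^{(j)}]_i$ is zero or one-dimensional, this amounts to requiring that the support of $S^{(j)}$, namely the integer interval $[a_j, a_j + d_j - 1]$, be invariant under the involution $i \mapsto c - i$. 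An integer interval has this invariance precisely when it is symmetric about $c/2$, forcing $d_j = c - 2 a_j + 1$.

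With the strands shown to be symmetric, I would apply the preceding corollary to identify each $S^{(j)}$ with the irreducible $\sl_2(\F)$-representation $\F[x,y]_{d_j - 1}$ via the basis $v_j, L v_j, \ldots, L^{d_j - 1} v_j$, so that $E$ acts as $\times L$. The $H$-weight of $L^k v_j$ on this strand equals $-(d_j - 1) + 2k = 2(a_j + k) - c = 2\deg(L^k v_j) - c$, hence the weight decomposition of $S^{(j)}$ coincides with its grading via $\mathrm{weight} = 2\deg - c$. Summing strand-level actions yields an $\sl_2(\F)$-action on $A$ with the required compatibility. The principal obstacle is the symmetry claim for the strands: the graded cyclic decomposition itself is standard, but pinning down $d_j = c - 2 a_j + 1$ from the SLPn hypothesis via the strandwise restrictions of $\times L^{c-2i}$ is the substantive step, after which the identification with $\F[x,y]_{d_j - 1}$ from the preceding corollary furnishes the $\sl_2$-action essentially for free.
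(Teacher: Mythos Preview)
Your proposal is correct and follows essentially the same strand/Jordan-block decomposition strategy as the paper. The main difference is that your argument is more self-contained: for $(1)\Rightarrow(2)$ you prove the centering of the strands (i.e., $d_j=c-2a_j+1$) directly from the SLPn hypothesis via the strandwise restrictions of $\times L^{c-2i}$, whereas the paper outsources this to \cite[Proposition~2.38]{IMM}; and for $(2)\Rightarrow(1)$ you invoke the exercise showing $E^m\colon W_{-m}\to W_m$ is an isomorphism, while the paper passes through the Jordan degree type and cites \cite[Proposition~3.64(2)]{book}. Your route is slightly more elementary and avoids the external references, at the cost of spelling out the symmetry-of-support argument that the paper's proof sketch suppresses.
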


\begin{proof}[Proof sketch]

Suppose $L$ is a strong Lefschetz element on $A$ in the narrow sense. We construct an $\sl_2(\F)$ triple in $\End_\F(A)$ as follows: let $E=\times L:A\to A$. Consider the Jordan decomposition of $A$ with respect to the endomorphism $E$ written as $A=\bigoplus V_i$, that is, let the subspaces $V_i$ be the eigenspaces of the operator $E$. For each $V_i$, let  $F_i,H_i:V_i\to V_i$ to be the endomorphisms of $V_i$ given with respect to the basis in which $E\big |_{V_i}$ is in Jordan form by the matrices in \eqref{eq: H} and \eqref{eq: F}, respectively, where $d=\dim(V_i)-1$. Setting $H=\bigoplus H_i$ and $F=\bigoplus F_i$, one can check $E, H, F$ is an $\sl_2(\F)$ triple. Furthermore, from the properties of Jordan type one knows that the Jordan blocks of $L$ are centered around the middle degree of $A$; see \cite[Proposition 2.38]{IMM}. It follows that if $v$ is an eigenvector of weight $2k-d$ it is in degree $ (c-d)/2 +k$ (note that $c\equiv d\pmod{2}$). Substituting $i=(c-d)/2 +k$   it follows that $W_{2i-c}(A)=A_i$.

Conversely, suppose $A$ is an $\sl_2(\F)$-representation with $E=\times L$. Then one can use the information about the grading to verify that the Jordan blocks are centered around degree $\lfloor c/2\rfloor$. Thus the Jordan degree type is the transpose of the Hilbert function of $A$. By \cite[Proposition 3.64 (2)]{book} it follows that $L$ is a strong Lefschetz element on $A$ in the narrow sense.
\end{proof}

\subsection{Tensor products}\label{s: 5}
From \cref{thm:sl2andSLPn}, we can deduce how SLPn behaves when we take tensor products. We need the following lemma.

\begin{lem}
If $\F$ is an algebraically closed field of characteristic zero and $A,A'$ are associative algebras which are representations of $\sl_2(\F)$ , then so is $A\otimes_\F A'$ with the action $g\cdot(v\otimes v')=(gv)\otimes v' + v\otimes (gv')$. If $v,v'$ are weight vectors then $v\otimes v'$ is also a weight vector with $\weight(v\otimes v')=\weight(v)+\weight(v')$.
\end{lem}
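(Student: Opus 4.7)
The plan is to verify directly that the given formula defines a Lie algebra homomorphism $\rho: \sl_2(\F) \to \End_\F(A \otimes_\F A')$, and then observe that the weight statement is immediate from the formula applied to $H$. The hypothesis that $A$ and $A'$ are associative algebras is only used to guarantee that $A \otimes_\F A'$ inherits the usual tensor-product algebra structure; for the $\sl_2$-action alone, the argument works for any two $\sl_2$-representations.

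First I would confirm well-definedness: for fixed $g \in \sl_2(\F)$, the assignment $(v, v') \mapsto (gv) \otimes v' + v \otimes (gv')$ is $\F$-bilinear in $(v, v')$, so by the universal property of the tensor product it descends to an $\F$-linear endomorphism of $A \otimes_\F A'$. The dependence on $g$ is manifestly $\F$-linear as well, giving a linear map $\rho: \sl_2(\F) \to \End_\F(A \otimes_\F A')$.

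The core step is to check $\rho([g, h]) = [\rho(g), \rho(h)]$, which by bilinearity reduces to an identity on pure tensors. Expanding $g \cdot (h \cdot (v \otimes v'))$ twice via the defining formula produces four terms
\[
(ghv)\otimes v' + (hv) \otimes (gv') + (gv) \otimes (hv') + v \otimes (ghv'),
\]
and the analogous expansion of $h \cdot (g \cdot (v \otimes v'))$ shares the two mixed cross terms. These cancel upon subtraction, leaving $([g,h]v) \otimes v' + v \otimes ([g,h]v')$, which is precisely $[g,h] \cdot (v \otimes v')$. This is the standard fact that tensor products of Lie-algebra representations are again representations via the Leibniz-like coproduct on the universal enveloping algebra, but I would present it as a direct calculation rather than invoke that terminology.

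For the weight statement, if $Hv = \weight(v)\, v$ and $Hv' = \weight(v')\, v'$, then the formula gives
\[
H \cdot (v \otimes v') = (Hv) \otimes v' + v \otimes (Hv') = \bigl(\weight(v) + \weight(v')\bigr)\,(v \otimes v'),
\]
so $v \otimes v'$ is an $H$-eigenvector with weight $\weight(v) + \weight(v')$. There is no genuine obstacle here; the only care required is bookkeeping in the bracket computation, and nothing uses the algebra structure on $A$ or $A'$ or the assumption that $\F$ is algebraically closed of characteristic zero.
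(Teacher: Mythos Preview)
Your proof is correct and more complete than the paper's: the paper explicitly says ``We show the statement about weights only'' and gives exactly the computation you give for $H\cdot(v\otimes v')$, omitting the verification that $\rho$ is a Lie algebra homomorphism. Your bracket calculation fills that gap and is the standard argument, so there is nothing to add.
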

\begin{proof}
We show the statement about weights only: say $\weight(v)=\lambda$ and  $\weight(v')=\lambda'$ so that
$Hv=\lambda v, Hv'=\lambda v'$. Then 
\[
H(v\otimes v')=(Hv)\otimes v' + v\otimes (Hv')=\lambda v\otimes v'+v\otimes \lambda'v'=(\lambda+\lambda')v\otimes v'
\]
shows that $v\otimes v'$ is a weight vector with weight $\lambda+\lambda'$.
\end{proof}

\begin{thm}
\label{thm:tensor} 
Let $\F$ be an algebraically closed  field of characteristic zero.
If $L$ is a strong Lefschetz element in the narrow sense on $A$ and if $L'$ is a strong Lefschetz element in the narrow sense on $A'$ then $L\otimes 1+1\otimes L'$ is a strong Lefschetz element in the narrow sense on $A\otimes_\F A'$.
\end{thm}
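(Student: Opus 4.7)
The plan is to apply \cref{thm:sl2andSLPn} in both directions: the hypotheses endow $A$ and $A'$ with $\sl_2(\F)$-representation structures, the preceding lemma transfers this to $A\otimes_\F A'$, and then the second direction of \cref{thm:sl2andSLPn} extracts the desired SLPn conclusion.

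First I would unpack the hypotheses. Let $c$ and $c'$ denote the socle degrees of $A$ and $A'$. By \cref{thm:sl2andSLPn}, there is an $\sl_2(\F)$-triple $(E,H,F)$ acting on $A$ with $E=\times L$ and $A_i=W_{2i-c}(A)$, and similarly a triple $(E',H',F')$ on $A'$ with $E'=\times L'$ and $A'_j=W_{2j-c'}(A')$. The preceding lemma makes $A\otimes_\F A'$ into an $\sl_2(\F)$-representation under the derivation-type action $g\cdot(v\otimes v')=(gv)\otimes v'+v\otimes(gv')$. A direct check on pure tensors shows that the image of the $\sl_2$-generator $E$ under this action is precisely multiplication by $L\otimes 1+1\otimes L'$:
\begin{equation*}
E(v\otimes v')=(Lv)\otimes v'+v\otimes(L'v')=(L\otimes 1+1\otimes L')(v\otimes v').
\end{equation*}

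Next I would verify that the weight space decomposition of $A\otimes_\F A'$ matches its grading decomposition in the way required by \cref{thm:sl2andSLPn}. The socle degree of $A\otimes_\F A'$ equals $c+c'$ because Hilbert series are multiplicative under tensor product. If $v\in A_i$ and $v'\in A'_j$, then $v$ has $H$-weight $2i-c$ and $v'$ has $H'$-weight $2j-c'$, so by the lemma $v\otimes v'$ is a weight vector of weight $(2i-c)+(2j-c')=2(i+j)-(c+c')$. Since $v\otimes v'$ lies in degree $i+j$ of $A\otimes_\F A'$, summing the identifications $A_i\otimes A'_j=W_{2i-c}(A)\otimes W_{2j-c'}(A')$ over all pairs with $i+j=k$ yields
\begin{equation*}
(A\otimes_\F A')_k=\bigoplus_{i+j=k}A_i\otimes A'_j=\bigoplus_{i+j=k}W_{2i-c}(A)\otimes W_{2j-c'}(A')=W_{2k-(c+c')}(A\otimes_\F A'),
\end{equation*}
where the last equality uses that weight spaces of a tensor product $\sl_2$-representation decompose as the sum over partitions of the weight. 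Applying \cref{thm:sl2andSLPn} in the reverse direction then concludes that $L\otimes 1+1\otimes L'$ is a strong Lefschetz element in the narrow sense on $A\otimes_\F A'$.

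The main obstacle is essentially bookkeeping rather than a conceptual difficulty: one must carefully track the parity conventions ($c+c'$ as the new socle degree, the additive behavior of weights under tensor product) and confirm that the tensor-weight decomposition identity above is actually an equality and not merely an inclusion. Both of these reduce to the standard fact that $W_\lambda(V\otimes V')=\bigoplus_{\mu+\mu'=\lambda}W_\mu(V)\otimes W_{\mu'}(V')$ for finite-dimensional $\sl_2$-representations, which follows from the simultaneous diagonalizability of $H$ on each factor in characteristic zero.
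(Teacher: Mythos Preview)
Your proposal is correct and follows essentially the same approach as the paper: use \cref{thm:sl2andSLPn} to equip $A$ and $A'$ with $\sl_2$-structures, invoke the preceding lemma to get the tensor $\sl_2$-structure with additive weights, verify that $E$ acts as multiplication by $L\otimes 1+1\otimes L'$, match the grading with the weight decomposition via $(A\otimes_\F A')_k=W_{2k-(c+c')}(A\otimes_\F A')$, and apply \cref{thm:sl2andSLPn} in the reverse direction. The only cosmetic difference is the order in which you check the $E$-action versus the weight-grading match, and your explicit remark that the tensor weight-space identity follows from simultaneous diagonalizability of $H$, which the paper leaves implicit.
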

\begin{proof}
By  \cref{thm:sl2andSLPn} we have that if $c, c'$ are the socle degrees of $A,A'$, respectively, then $A_i=W_{2i-c}(A)$ and $A'_j=W_{2j-c'}(A')$, so
\begin{equation}\label{eq: 1}
A=\bigoplus_{i=0}^c A_i=\bigoplus_{i=0}^c W_{2i-c}(A) \quad \text{and} \quad A'=\bigoplus_{j=0}^{c'} A'_j=\bigoplus_{j=0}^{c'} W_{2j-c'}(A')
\end{equation}
imply 
\begin{equation}\label{eq: 2}
A\otimes_\F A'=\bigoplus_{i=0, j=0}^{c,c'} A_i\otimes_\F A'_j=\bigoplus_{i=0, j=0}^{c,c'} W_{2i-c}(A)\otimes_\F W_{2j-c'}(A').
\end{equation}
From the fact that $\deg(v\otimes v')=\deg(v)+\deg(v')$ and \eqref{eq: 1} we deduce that 
\[
(A\otimes_\F A')_k=\bigoplus_{i=0}^c A_i\otimes_F A'_{k-i}.
\]
Note that the maximum socle degree of $A\otimes_\F A'$ is $c+c'$. From the identity 
$$\weight(v\otimes v')=\weight(v)+\weight(v')$$ and \eqref{eq: 2} we deduce that
\[
W_{2k-c-c'}(A\otimes_\F A')=\bigoplus_{i=0}^c W_{2i-c}(A)\otimes_\F W_{2(k-i)-c'}(A')=\bigoplus_{i=0}^c A_i\otimes_\F A'_{k-i}.
\]
Comparing, we see that $(A\otimes_\F A')_k=W_{2k-c-c'}(A\otimes_\F A')$, where the weight spaces of $A\otimes_\F A'$ correspond to the action 
\[
E(v\otimes v')=Ev\otimes v'+v\otimes Ev'=Lv\otimes v'+v\otimes L'v'=(L\otimes 1+1\otimes L')v\otimes v'.
\]
\cref{thm:sl2andSLPn} gives that $L\otimes 1+1\otimes L'$ is a strong Lefschetz element on $A\otimes_\F A'$.
\end{proof}

A corollary of \cref{thm:tensor} is the following

\begin{cor}[Tensor product preserves SLPn]
If $\F$ is a  field of characteristic zero and $A,A'$ are graded artinian $\F$-algebras which satisfy SLPn, then $A\otimes_\F A'$ also satisfies SLPn.
\end{cor}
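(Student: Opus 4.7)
The strategy is to reduce to the algebraically closed case handled by Theorem~\ref{thm:tensor} via extension of scalars, and then descend back to $\F$ via an openness argument valid over any infinite field.

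Let $L\in A_1$ and $L'\in A'_1$ be strong Lefschetz elements in the narrow sense, and set $\overline{A}=A\otimes_\F\overline{\F}$ and $\overline{A'}=A'\otimes_\F\overline{\F}$, where $\overline{\F}$ is an algebraic closure of $\F$. Because extension of scalars preserves ranks of linear maps between finite-dimensional $\F$-vector spaces, $L\otimes 1$ and $L'\otimes 1$ remain strong Lefschetz in the narrow sense on $\overline{A}$ and $\overline{A'}$ respectively. Since $\overline{\F}$ is algebraically closed of characteristic zero, Theorem~\ref{thm:tensor} applies and yields that $L\otimes 1+1\otimes L'$ is a strong Lefschetz element in the narrow sense on $(A\otimes_\F A')\otimes_\F\overline{\F}\cong \overline{A}\otimes_{\overline{\F}}\overline{A'}$.

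It remains to descend to $\F$. Since $A$ and $A'$ satisfy SLPn, they have symmetric Hilbert functions by Proposition~\ref{equiv narrow SLP}, and an elementary convolution computation shows $A\otimes_\F A'$ then has symmetric Hilbert function as well; let $c$ denote its socle degree. Fix an $\F$-basis $x_1,\ldots, x_n$ of $(A\otimes_\F A')_1$, and for the generic linear form $\ell=a_1 x_1+\cdots+a_n x_n$ write $M_i(\ell)$ for the matrix, in a fixed homogeneous basis, of the multiplication
\[
\times \ell^{c-2i}:(A\otimes_\F A')_i\longrightarrow (A\otimes_\F A')_{c-i}, \qquad 0\leq i\leq \lfloor c/2\rfloor,
\]
which is square by symmetry. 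Its entries are polynomials in $a_1,\ldots, a_n$ with coefficients in $\F$, and $\ell$ is strong Lefschetz in the narrow sense precisely when
\[
f(a_1,\ldots, a_n):=\prod_{i=0}^{\lfloor c/2\rfloor}\det M_i(\ell)
\]
is nonzero. The previous paragraph provides an $\overline{\F}$-point at which $f$ does not vanish, so $f\in \F[a_1,\ldots, a_n]$ is not the zero polynomial. Since $\F$ has characteristic zero and is therefore infinite, $f$ must admit an $\F$-point at which it is nonzero, and the corresponding linear form is a strong Lefschetz element in the narrow sense on $A\otimes_\F A'$.

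The algebraic substance lives entirely in Theorem~\ref{thm:tensor}; the only delicate point here is the descent step, whose content is the observation that SLPn is cut out by a Zariski-open condition on $A_1$ defined over the base field $\F$, so non-emptiness of the narrow Lefschetz locus over $\overline{\F}$ propagates to non-emptiness over the infinite field $\F$.
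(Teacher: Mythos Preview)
Your proof is correct, and in fact the paper gives no explicit proof at all, simply stating the corollary as an immediate consequence of Theorem~\ref{thm:tensor}. Your extension-of-scalars reduction is exactly the right way to fill in the gap between the algebraically closed hypothesis of the theorem and the arbitrary characteristic-zero field in the corollary.

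That said, your descent step is more elaborate than necessary. You already observed that ranks of linear maps between finite-dimensional vector spaces are preserved under field extension. But the element $L\otimes 1+1\otimes L'$ produced by Theorem~\ref{thm:tensor} over $\overline{\F}$ is the image of the element $L\otimes 1+1\otimes L'\in (A\otimes_\F A')_1$, which is already defined over $\F$. Hence the multiplication maps $\times(L\otimes 1+1\otimes L')^{c-2i}$ on $A\otimes_\F A'$ have the same ranks before and after base change, so this very element is a narrow-sense strong Lefschetz element over $\F$. In other words, the ``$\overline{\F}$-point'' at which your polynomial $f$ does not vanish is in fact an $\F$-point, and the Zariski-openness argument to locate a different $\F$-point is superfluous. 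Your argument is not wrong, just longer than it needs to be; the direct route also has the virtue of exhibiting an explicit Lefschetz element.
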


From the above corollary one can easily deduce Stanley's theorem applying induction on the embedding dimension $n$.

\begin{cor}[Stanley's Theorem - second proof]\label{Stanley second}
If $\F$ has characteristic 0, then the algebra $A=\F[x_1,\ldots,x_n]/(x_1^{d_1}, \ldots, x_n^{d_n})=\F[x_1]/(x_1^{d_1}) \otimes_\F \cdots \otimes_\F \F[x_n]/(x_n^{d_n}) $ satisfies SLP in the narrow sense.
\end{cor}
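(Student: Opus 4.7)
The plan is to proceed by induction on the number of variables $n$, using the tensor product decomposition already displayed in the statement together with the preceding corollary that tensor products of SLPn algebras are SLPn.

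For the base case $n=1$, I would show directly that $R=\F[x]/(x^d)$ satisfies SLPn with Lefschetz element $L=x$. Here the socle degree is $c=d-1$ and each graded piece $R_i$ is one-dimensional with basis $\{x^i\}$ for $0\le i\le c$. The map $\times L^{c-2i}\colon R_i\to R_{c-i}$ sends $x^i$ to $x^{c-i}$, a basis vector of $R_{c-i}$, so it is a bijection. This is essentially a one-line check and requires no hypothesis on $\chr(\F)$.

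For the inductive step, assuming the result for $n-1$, I would write
\[
A=\F[x_1,\ldots,x_n]/(x_1^{d_1},\ldots,x_n^{d_n})\cong \underbrace{\F[x_1,\ldots,x_{n-1}]/(x_1^{d_1},\ldots,x_{n-1}^{d_{n-1}})}_{=:A'}\otimes_\F \F[x_n]/(x_n^{d_n}),
\]
where the isomorphism is the standard identification of a quotient of a polynomial ring by a sum of ideals in disjoint variables with a tensor product. By the inductive hypothesis $A'$ satisfies SLPn, and by the base case so does $\F[x_n]/(x_n^{d_n})$. The corollary on tensor products preserving SLPn (which is where the characteristic zero hypothesis enters, via the $\sl_2$ representation theory underlying \cref{thm:sl2andSLPn} and \cref{thm:tensor}) then immediately gives that $A$ satisfies SLPn.

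There is no serious obstacle here: the content of the argument has been pushed entirely into the tensor product theorem, so the deduction is essentially formal. The only thing worth noting explicitly is the choice of Lefschetz element produced by induction, namely $L=x_1\otimes 1\otimes\cdots\otimes 1+1\otimes x_2\otimes\cdots\otimes 1+\cdots+1\otimes\cdots\otimes 1\otimes x_n$, i.e.\ $L=x_1+\cdots+x_n$ under the isomorphism above, which is the explicit strong Lefschetz element in the narrow sense coming from iterating \cref{thm:tensor}.
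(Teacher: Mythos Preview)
Your proof is correct and follows exactly the approach the paper indicates: induction on $n$ using the corollary that SLPn is preserved under tensor products over a field of characteristic zero. The paper in fact gives no details beyond the phrase ``applying induction on the embedding dimension $n$'', so your write-up, including the base case verification and the identification of $L=x_1+\cdots+x_n$ as the resulting Lefschetz element, is a faithful and more explicit rendering of the intended argument.
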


\begin{rem} \hfill
\begin{enumerate}
\item
While the symmetric unimodality of Hilbert functions  is preserved under taking tensor product, just unimodality is not.  For example for 
\[
A=\F[x,y,z]/(x^2,xy,y^2,xz,yz,z^5)
\] with Hilbert function $1, 3, 1, 1, 1$ we have that the Hilbert function of $A\otimes_\F A$ is $1, 6, 11, 8, 9, 8, 3, 2, 1$.
\item
While the SLPn is preserved under taking tensor product, the SLP (not in the narrow sense) is not preserved by tensor product. In the example above $A$ has SLP but since its Hilbert function is not unimodal, $A\otimes_\F A$ cannot have the SLP.
\end{enumerate}
\end{rem}

The issue in part 2 of the remark is remedied by restricting to Gorenstein algebras, which have symmetric Hilbert function. Recall that for algebras with symmetric Hilbert function the SLP is equivalent to SLPn by \Cref{equiv narrow SLP}. Thus we have:

 \begin{cor}[{\cite[Theorem 6.1]{MW}}]
If $\F$ is a  field of characteristic zero and $A,A'$ are graded artinian Gorenstein $\F$-algebras which satisfy SLP, then $A\otimes_\F A'$ also satisfies SLP.
\end{cor}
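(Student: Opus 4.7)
The plan is to reduce SLP to the narrow-sense SLPn for both $A$ and $A'$, apply the preceding corollary that tensor product preserves SLPn, and then descend back to SLP. The reduction is powered by the Gorenstein hypothesis, which forces symmetry of the Hilbert function.

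First I would note that since $A$ and $A'$ are artinian Gorenstein, Poincaré duality (\cref{prop: Poincare}) pairs the graded piece $A_i$ perfectly with $A_{c-i}$ via the orientation, forcing $h_A(i)=h_A(c-i)$, and analogously for $A'$; this is the exercise immediately following \cref{prop: Poincare}. By \cref{equiv narrow SLP}, for an algebra with symmetric Hilbert function SLP is equivalent to SLPn, so the hypothesis that $A$ and $A'$ satisfy SLP is automatically upgraded to SLPn for both.

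Next I would invoke the ``Tensor product preserves SLPn'' corollary to conclude that $A\otimes_\F A'$ satisfies SLPn, which is legitimate since $\F$ has characteristic zero. Finally, SLPn trivially implies SLP: the central multiplication maps $\times L^{c+c'-2i}:(A\otimes_\F A')_i \to (A\otimes_\F A')_{c+c'-i}$ being bijections forces the intermediate powers $\times L^d$ to be either injective or surjective by factoring them through one of these bijections, exactly as in the $(\Leftarrow)$ direction of the proof of \cref{equiv narrow SLP}. Hence $A\otimes_\F A'$ satisfies SLP.

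There is essentially no obstacle: the entire argument is the assembly of three prior facts (Gorenstein $\Rightarrow$ symmetric Hilbert function, \cref{equiv narrow SLP}, and the tensor corollary), and the Gorenstein assumption is doing precisely what is needed to sidestep the failure noted in the preceding remark (where tensoring broke unimodality and hence SLP). It is worth noting that $A\otimes_\F A'$ is itself artinian Gorenstein, with one-dimensional socle generated by $a_{\mathrm{soc}}\otimes a'_{\mathrm{soc}}$ and orientation $\int_{A\otimes A'}=\int_A\otimes \int_{A'}$, so the result can equivalently be read as saying that SLP is preserved by tensor product within the class of AG algebras over a characteristic zero field.
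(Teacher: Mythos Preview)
Your proposal is correct and matches the paper's approach exactly: the paper's argument is the single sentence preceding the corollary, namely that Gorenstein algebras have symmetric Hilbert function, so by \cref{equiv narrow SLP} SLP and SLPn coincide for them, and then the previous corollary on tensor products preserving SLPn applies. Your write-up simply spells out this chain of implications in full, and your closing remark that $A\otimes_\F A'$ is itself AG is a pleasant bonus observation not needed for the proof.
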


\section{Gorenstein rings via Macaulay inverse systems} \label{s: 6}

The description of the dual ring of the polynomial ring in \cref{DUAL} is taken from \cite{DNS}. The material in \cref{MIS} follows Eisenbud's Commutative Algebra book \cite{Eisenbud}  and Geramita's lectures \cite[Lecture~9]{Ger95}. The material on Hessians in \cref{HESS} follows \cite{MW}.

\subsection{The graded dual of the polynomial ring}\label{DUAL}

Recall the notion of a dual for an $\F$-vector space:

\begin{defn}
Let $V$ be an $\F$-vector space. Its dual is 
\[V^*=\Hom_\F(V,\F)=\{\varphi: V\to \F \mid \varphi \text{ is } \F-\text{linear}\},
\] the vector space of linear functionals on $V$.  
\end{defn}

\begin{exer}\label{exer: double dual}
If $V$ is a finite dimensional vector space, there  is a natural isomorphism of vector spaces $V\cong V^{**}$.
\end{exer}

We extend this idea to construct duals of rings and modules.

\begin{defn}[Divided power algebra]
\label{def:gradedhom}
Say $R=\F[x_1,\ldots,x_n]$ is the polynomial ring.  Let
$$R^*:=\Hom_\F^{\rm gr}(R,\F)=\bigoplus_{i\geq 0} \Hom_\F(R_i,\F).$$

 We use a standard shorthand for monomials: if $\mathbf{a}=(a_1,\ldots,a_n)\in \Z^{n}_{\ge 0}$, then $x^\mathbf{a}=x_1^{a_1}\cdots x_n^{a_n}$ is the corresponding monomial in $R$. If $x^\mathbf{a}$ is in $R_d$, we write $X^{[\mathbf{a}]}$ for the functional (in $R^*_d$) on $R_d$ which sends $x^{\mathbf{a}}$ to $1$ and all other monomials in $R_d$ to $0$. We'll make the convention from now on to write $X_i$ for the duals of the elements $x_i$ in $R_1^*$.  As a vector space, $R^*$ is isomorphic to a polynomial ring in the $n$ variables $X_1, \ldots, X_n$.  However, as we recall shortly, $R^*$ has the multiplicative structure of a \textit{divided power algebra}.  For this reason, we call $X^{[\mathbf{a}]}$ a \textit{divided} monomial and we write $R^*=\F[X_1,\ldots, X_n]_{DP}$. 
  \end{defn}

 The ring $R$ acts on $R^*$ by \textit{contraction}, which we denote by $\contract$.  That is, if $x^{\mathbf{a}}$ is a monomial in $R$ and $X^{[\mathbf{b}]}$ is a divided monomial in $R^*$, then
\[
x^{\mathbf{a}}\contract X^{[\mathbf{b}]}=\begin{cases} X^{[\mathbf{b}-\mathbf{a}]} & \text{ if } \mathbf{b} \ge \mathbf{a},\\
0& \text{ otherwise}.
\end{cases}
\]
This action is extended linearly to all of $R$ and $R^*$.  This action of $R$ on $R^*$ gives a perfect pairing of vector spaces $R_d\times R^*_d \to\F$ for any degree $d\ge 0$.  Suppose $U$ is a subspace of $R_d$.  We define
\[
U^{\perp}=\{g\in R^*_d: f\contract g=0 \mbox{ for all } f\in U\}.
\]
Macaulay \cite{Macaulay} introduced the {\em inverse system} of an ideal $I$ of $R$ to be
\[
I^{-1}:= \mbox{Ann}_{R^*}(I)=\{g\in R^*: f\contract g=0 \mbox{ for all } f\in I\}.
\]
If $I$ is a homogeneous ideal of $R$ then the inverse system $I^{-1}$ can be constructed degree by degree using the identification $(I^{-1})_d=I_d^{\perp}$. We return to this notion in \cref{defn: inverse system}.

A priori, $R^*$ is simply a graded $R$-module.  However, $R^*$ can be equipped with a multiplication which makes it into a ring.  Suppose $\mathbf{a}=(a_1,\ldots,a_n),\mathbf{b}=(b_1,\ldots,b_n)\in \Z^{n}_{\ge 0}$.  The multiplication in $R^*$ is defined on monomials by
\begin{equation}\label{eq:dividedmultiplication}
X^{[\mathbf{a}]}X^{[\mathbf{b}]}= {\mathbf{a}+\mathbf{b} \choose \mathbf{a} } X^{[\mathbf{a+b}]},
\end{equation}
where
\begin{equation}\label{eq:multifactorial}
\mathbf{a}!=\prod_{i=0}^N a_i ! \quad\mbox{and}\quad \binom{\mathbf{a}+\mathbf{b}}{\mathbf{a}}=\prod_{i=1}^n \binom{a_i+b_i}{a_i}.
\end{equation}
This multiplication is extended linearly to all of $R^*$.   We see from the above definition that if $\mathbf{a}=(a_1,\ldots,a_n)$ then $X^{[\mathbf{a}]}=\prod_{i=1}^n X_i^{[a_i]}$.  

\begin{exer}\label{ex: 6.4}
Now set $X^{\mathbf{a}}=\prod_{i=1}^n X_i^{a_i}$, where the multiplication occurs in the divided power algebra as defined above. Deduce from the above definition that
\begin{equation}\label{eq:regularmonomialtodividedmonomial}
X^{\mathbf{a}}=\mathbf{a}! X^{[\mathbf{a}]}.
\end{equation}
\end{exer}

\begin{rem} In characteristic zero, $\mathbf{a}!$ never vanishes and so, by \eqref{eq:regularmonomialtodividedmonomial}, $R^*$ is generated as an algebra by $X_0,\ldots,X_N$, just like the polynomial ring.  However, in charateristic $p>0$, $R^*$ is infinitely generated by all the divided power monomials $X_j^{[p^{k_i}]}$ for all $j=0,\ldots ,N$ and $k_j\ge 0$. The exercise below justifies this last assertion.
\end{rem}
 
 \begin{exer}
 Prove that in characteristic $p$ for any $\mathbf{a}=(a_1,\ldots,a_n)$ where $a_j=\sum a_{ij}p^i$, we have
\[
X^{[\mathbf{a}]}=\prod_{j=1}^n \prod_{i} (X_j^{[p^i]})^{a_{ij}}.
\]
{\em Hint:} Use Lucas' identity -- given base $p$ expansions $a=\sum a_ip^i$ and $b=\sum b_ip^i$ for $a, b\in\N$,  then
\[
{b \choose a} = \prod_{i=0}^\infty {b_i \choose a_i} \mod p.
\]
\end{exer}

We now revisit the characteristic zero case.  Suppose $\F$ is a field of characteristic zero and let $S=\F[X_1,\ldots,X_n]$ be a polynomial ring.  Consider the action of $R$ on $S$ by partial differentiation, which we represent by `$\circ$'.  That is, if $\mathbf{a}=(a_1,\ldots,a_n)\in \Z^{n}_{\ge 0}$, $x^\mathbf{a}=x_1^{a_0}\cdots x_n^{a_n}$ is a monomial in $R$, and $g\in S$, we write
\[
x^{\mathbf{a}}\circ g=\frac{\partial^{\mathbf{a}}g}{\partial X^{\mathbf{a}}}
\]
for the action of $x^{\mathbf{a}}$ on $g$ (extended linearly to all of $R$).  In particular, if $\mathbf{a}\le \mathbf{b}$, then
\[
x^{\mathbf{a}}\circ X^{\mathbf{b}}=\frac{\mathbf{b}!}{(\mathbf{b}-\mathbf{a})!}X^{\mathbf{b}-\mathbf{a}},
\]
where we  use the conventions in~\eqref{eq:multifactorial}.  This action gives a perfect pairing $R_d\times S_d\to\F$, and, given a homogeneous ideal $I\subset R$, we define $I_d^{\perp}$ and $I^{-1}$ in the same way as we do for contraction.

Since we are in characteristic zero, the map of rings $\Phi:S\to R^*$ defined by $\Phi(X_i)=X_i$ extends to all monomials via~\eqref{eq:regularmonomialtodividedmonomial} to give $\Phi(y^\mathbf{a})=Y^{\mathbf{a}}=\mathbf{a}! Y^{[\mathbf{a}]}$.  Thus $S$ and $R^*$ are isomorphic rings in view of \Cref{ex: 6.4}.  Moreover, if $F\in R$ and $g\in S$, then $\Phi(F\circ g)=F\contract\Phi(g)$~\cite[Theorem~9.5]{Ger95}, so $S$ and $R^*$ are isomorphic as $R$-modules.

\subsection{Macaulay inverse systems}\label{MIS}
\begin{defn}[Dualizing functor]
Let $M$ be a finitely generated $R$-module. Define the dual of $M$ to be
$
D(M) = \Hom_R(M, R^*)
$.
Let $f:M\to N$ be an $R$-module homomorphism. Define $D(f)$ to be the induced $R$-module homomorphism 
\[D(f):D(N)=\Hom_R(N, R^*)\to D(M)=\Hom_R(M, R^*)\] given by
\[
D(f)(\varphi)=\varphi\circ f.
\]
This makes $D$ into a contravariant functor in the category of finitely generated $R$-modules.
\end{defn}

\begin{exer}
 Let $M$ be a finitely generated $R$-module. Recall that we defined $D(M)=\Hom_R(M, R^*)$. In this exercise we also consider the set  $M^*=\Hom_\F(M,\F)$ with its two structures induced from the $R$-module structure of $M$ by setting
\[
r\phi(x)=\phi(r\cdot x), \quad \forall r\in R, x\in M.
\]
 Show that $D(M)\cong \Hom_\F(M,\F)$ as $R$-modules,  so an equivalent way to define the dual module dual to $M$ is $M^*$ (with its $R$-module structure).
 
 {\em Hint:} Hom-tensor adjointness may come in handy.
\end{exer}

We now come to a form of duality that involves the above defined functor.

\begin{thm}[Matlis duality]\label{thm:Matlis}
\label{thm: Matlis}
The functor $D$ induces an anti-equivalence of categories between
\[
\{\text{noetherian $R$-modules}\} \leftrightarrow \{\text{artinian $R$-submodules of $R^*$}\}
\]
given by sending $M\mapsto D(M)$.
\end{thm}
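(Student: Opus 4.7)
The plan is to verify the three ingredients of an anti-equivalence: (i) $D$ is a well-defined, exact contravariant functor landing in the target category; (ii) biduality $\eta_M: M \to D(D(M))$ is a natural isomorphism on both sides; (iii) essential surjectivity. Throughout, ``Hom'' is taken in the graded category so that $R^*$ plays the role of the graded injective hull of the residue field $\F$.

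For (i), I would invoke the exercise preceding the theorem identifying $D(M) \cong \Hom_\F(M,\F)$ as $R$-modules for every finitely generated $M$. In the graded setting this reads $D(M)_d \cong \Hom_\F(M_{-d},\F)$, with two immediate payoffs: $D$ is exact (since $\F$-linear duality is exact over a field), and $\dim_\F D(M)_d = \dim_\F M_{-d}$. Consequently, if $M$ is graded noetherian (each piece finite-dimensional, vanishing for $d \ll 0$), then $D(M)$ has finite-dimensional pieces and vanishes for $d \gg 0$, which is the graded artinian condition. In the cyclic case $M = R/I$ a direct unravelling of the contraction action gives $D(R/I) = \Ann_{R^*}(I) = I^{-1}$, visibly a submodule of $R^*$; more generally a presentation $R^a \to R^b \to M \to 0$ together with exactness of $D$ realises $D(M)$ as a submodule of $(R^*)^b$, which can then be embedded into $R^*$ using its cogenerator property.

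For (ii), the evaluation map $\eta_M: m \mapsto (\phi \mapsto \phi(m))$ is natural in $M$. I would first show $\eta_R$ is an isomorphism: here $D(R) = R^*$, and $D(R^*)$ is computed degree-by-degree, where it equals the double $\F$-dual of the finite-dimensional piece $R_d$, hence $R_d$ itself by \cref{exer: double dual}. Additivity of $D$ gives $\eta_{R^b}$ an isomorphism, and for a general noetherian $M$ with presentation $R^a \to R^b \to M \to 0$, exactness of $D$ combined with the five lemma promotes this to $\eta_M$ being an isomorphism.

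For (iii), given an artinian submodule $N \subseteq R^*$, set $M := D(N)$. Running the Hilbert function argument of step (i) in reverse shows $M$ is noetherian, and the symmetric biduality argument with source and target swapped gives $D(M) \cong N$. The main obstacle is the careful bookkeeping of the grading throughout: one must consistently use graded Homs (with the appropriate degree reversal), reduce the biduality for $R$ (equivalently, for $R^*$) to the finite-dimensional double-dual exercise applied piece by piece rather than to an unwieldy infinite-dimensional statement, and verify that the various identifications $D(M) \cong \Hom_\F(M,\F)$ are sufficiently functorial in $M$. Once these grading conventions are pinned down, the remainder is a standard diagram chase.
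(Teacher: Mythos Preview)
The paper does not prove this theorem; it is stated without proof as a classical result, with the surrounding section citing Eisenbud's textbook and Geramita's lectures for background. So there is no ``paper's own proof'' to compare against, and your sketch is the standard route one would take to establish Matlis duality in the graded setting.

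That said, one step in your argument does not go through as written. You claim that for general noetherian $M$, after realising $D(M)$ as a submodule of $(R^*)^b$ via a free presentation, one can then embed it into a single copy of $R^*$ ``using its cogenerator property''. This is false: being an injective cogenerator means every module embeds into a \emph{product} of copies of $R^*$, not into a single copy. Concretely, $D(R^2)=(R^*)^2$ has socle of $\F$-dimension $2$, whereas any submodule of $R^*$ has socle of dimension at most $1$, so no such embedding exists. The issue is really with the theorem statement as recorded in the notes: the honest target category for the anti-equivalence is \{artinian $R$-modules\}, not \{artinian submodules of $R^*$\}. The restriction to genuine submodules of $R^*$ corresponds, on the noetherian side, to cyclic modules $R/I$ --- which is exactly the special case the paper actually uses (\cref{lem:DR/I} and \cref{thm:Mac}). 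If you drop that embedding step and prove the anti-equivalence with target all artinian modules, the rest of your outline (exactness from $\F$-linear duality, biduality via degreewise finite-dimensional double-dual plus the five lemma, and the symmetric argument for essential surjectivity) is correct.
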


Next we wish to make the meaning of $D(M)$ more concrete in the special case when $M=R/I$ is a cyclic $R$-module.
\begin{lem}
\label{lem:DR/I}
Suppose $I$ is a homogeneous ideal of a polynomial ring $R$. We compute 
$$D(R/I)=\Hom_R(R/I, R^*)\cong \Ann_{R^*}(I) =(0:_{R^*} I)=\{g\in R^* \mid f \contract g=0 \ \forall f\in I\}.$$
\end{lem}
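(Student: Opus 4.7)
The plan is to exhibit an explicit isomorphism $\Psi: \Hom_R(R/I, R^*) \to \Ann_{R^*}(I)$ by evaluation at the generator $1+I$ of the cyclic module $R/I$, and then check that this map is well-defined, $R$-linear, and has an inverse.

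First, I would set up the evaluation map. Given $\varphi \in \Hom_R(R/I, R^*)$, define $\Psi(\varphi) = \varphi(1+I) \in R^*$. The key point is that $\Psi(\varphi)$ lands in $\Ann_{R^*}(I)$: for any $f \in I$, using $R$-linearity of $\varphi$ and the fact that $f \cdot (1+I) = 0$ in $R/I$, we get
\[
f \contract \varphi(1+I) = \varphi(f\cdot(1+I)) = \varphi(0) = 0,
\]
so $\varphi(1+I) \in \Ann_{R^*}(I) = \{g \in R^* \mid f \contract g = 0 \ \forall f \in I\}$. Linearity of $\Psi$ as a map of $R$-modules is immediate.

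Next I would construct the inverse. Given $g \in \Ann_{R^*}(I)$, define $\varphi_g: R/I \to R^*$ by $\varphi_g(r+I) = r \contract g$. This is well-defined: if $r+I = r'+I$, then $r-r' \in I$, and since $g$ is annihilated by $I$ we have $(r-r')\contract g = 0$, so $r\contract g = r'\contract g$. It is clearly $R$-linear. Setting $\Phi(g) = \varphi_g$ gives a map $\Phi: \Ann_{R^*}(I) \to \Hom_R(R/I,R^*)$.

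Finally I would verify that $\Psi$ and $\Phi$ are mutually inverse. On one side, $\Psi(\Phi(g)) = \varphi_g(1+I) = 1 \contract g = g$. On the other side, for any $\varphi \in \Hom_R(R/I,R^*)$ and any $r+I \in R/I$, we compute $\Phi(\Psi(\varphi))(r+I) = r \contract \varphi(1+I) = \varphi(r+I)$ by $R$-linearity of $\varphi$, so $\Phi\circ \Psi = \mathrm{id}$. This establishes the isomorphism. I do not anticipate any serious obstacle — the argument is a routine instance of the standard bijection $\Hom_R(R/I,M) \cong \Ann_M(I)$ for any $R$-module $M$, with $M = R^*$ here; the only point requiring a moment of care is checking that the evaluation lands in the annihilator, which uses precisely the defining relation of $R/I$.
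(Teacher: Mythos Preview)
Your proof is correct and is precisely the standard argument for the identification $\Hom_R(R/I,M)\cong(0:_M I)$ specialized to $M=R^*$. The paper states this lemma without proof, treating it as a routine computation, so your write-up fills in exactly the expected details.
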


\begin{defn}[Inverse system]\label{defn: inverse system}
Suppose $I$ is a homogeneous ideal of a polynomial ring $R$. The {\em inverse system} of $I$ is the vector space
\[
I^{-1}=\{g\in R^*\mid f\contract g=0, \forall f\in I\}.
\]

\end{defn}

\begin{rem}
Don't let the notation deceive you! If $I$ is an ideal of $R$, it does not mean that $I^\perp$ is an ideal (or $R^*$-submodule) of $R^*$. It is just an $R$-module which happens to be a subset of $R^*$.
\end{rem}

\begin{ex}
Concretely, say 
\begin{enumerate}
\item $I=(x^2,y^3)\subseteq R=\F[x,y]$. Then 
\[
I^{-1}=(0:_{R^*} I)=\Span\{XY^2, XY, Y^2, X, Y, 1\}=R\contract XY^2
\]
 is the $R$-submodule of $R^*$ generated by $XY^2$.
\item $I=(x^2,xy^2,y^3)\subseteq R=\F[x,y]$. Then 
\[
I^{-1}=(0:_{R^*} I)=\Span\{XY, Y^2, X, Y, 1\}=R\contract XY +R\contract Y^2
\] is an $R$-submodule of $R^*$ with two generators.
\end{enumerate}
 \end{ex}
 
 Next, take 
 
 \begin{ex}\label{ex: perps}
 \begin{enumerate}
 \item $I=(x)\subseteq R=\F[x,y]$. Then 
 \[
 I^{-1}=(0:_{R^*} I)=\Span\{Y^i\mid i\geq 0\}.
 \]
\item $I=(x^d)\subseteq R=\F[x,y]$. Then 
\begin{eqnarray*}
I^{-1}=(0:_{R^*} I) &=& \Span\{X^iY^j\mid 0\leq i\leq d-1, j\geq 0\} \\
&=&
R^*_0\oplus R^*_1 \oplus R^*_2 \oplus \cdots \oplus R^*_{d-1} \oplus YR^*_{d-1} \oplus Y^2R^*_{d-1}\oplus \cdots \oplus Y^kR^*_{d-1}\oplus \cdots
\end{eqnarray*}
\end{enumerate}
\end{ex}

Both of the above $(0:_{R^*} I)$ are non-finitely generated $R$-modules. \Cref{thm:Matlis} shows that this corresponds to $R/I$ not being artinian.

\begin{exer}
Generalize \cref{ex: perps} to find the inverse system of the ideal defining a point in projective $n$-space and the inverse systems of all of the powers of this ideal.
\end{exer}

We now wish to study the inverse functor involved in the Matlis duality \cref{thm: Matlis}. In order to do this we define the inverse system of an $\F$-subspace of $R^*$.

\begin{defn}
Let $V$ be an $\F$-vector subspace of the $\F$-algebra $R^*$. The {\em inverse system} of $V$ is
\[
V^\perp=\Ann_R(V)=\{f\in R\mid f\circ v=0, \forall v\in V\}.
\]
We will be most interested in the case when $V=\Span\{F\}$ is a 1-dimensional $\F$-vector space and thus
\[
F^\perp=\Ann_R(F)=\{f\in R\mid f\circ F=0\}.
\]
\end{defn}

Macaulay inverse system duality is a concrete version of Matlis duality \cref{thm: Matlis} which can be stated in terms of the inverse systems defined above as follows:

\begin{thm}[Macaulay inverse system duality]
\label{thm:Mac}
With notation as above, there are bijective correspondence between 
\begin{eqnarray*}
\{R-\text{modules } M\subseteq R^*\} &\leftrightarrow & \{R/I \mid  I\subseteq R \text{ homogeneous ideal}\} \\
M &\mapsto & R/\Ann_R(M)\\
I^\perp=D(R/I) &\mapsfrom& R/I.
\end{eqnarray*}

Furthermore, we have the additional correspondences

\begin{center}
\begin{tabular}{cccc}
(a) & $M$ finitely generated & $\iff$  & $R/\Ann_R(M)$ artinian\\
(b) & $M=R\circ F$ cyclic& $\iff$  & $R/\Ann_R(F)$ artinian Gorenstein \\
 & & & $\deg(F)=$ socle degree of $R/\Ann_R(F)$.
\end{tabular}
\end{center}
\end{thm}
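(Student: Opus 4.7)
The plan is to reduce the whole correspondence to two double-perp identities and then leverage these to prove (a) and (b).

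First, I would establish the bijection by proving (i) $\Ann_R(I^\perp) = I$ for every homogeneous ideal $I$, and (ii) $(\Ann_R(M))^\perp = M$ for every graded $R$-submodule $M \subseteq R^*$. Both rest on the perfect pairing $R_d \times R^*_d \to \F$ provided by contraction, under which $(I^\perp)_d = I_d^\perp$. Identity (i) follows degree by degree: a homogeneous $f \in \Ann_R(I^\perp)$ of degree $e$ pairs to zero with $(I^\perp)_e = I_e^\perp$, hence $f \in I_e$ by finite-dimensional vector-space duality. Identity (ii) is subtler; the key intermediate step is $\Ann_R(M) \cap R_d = M_d^\perp$, whose nontrivial inclusion uses the $R$-module structure of $M$: for $f \in R_d$ annihilating $M_d$ and any homogeneous $h \in M_e$ with $e \geq d$, one rewrites $r \contract (f \contract h) = f \contract (r \contract h) = 0$ for every $r \in R_{e-d}$ (since $r \contract h \in M_d$ because $M$ is a submodule), so $f \contract h = 0$ by perfect pairing at degree $e-d$. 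The double-perp equality then follows by applying perfect duality again in each degree.

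For property (a), $R/\Ann_R(M)$ is artinian precisely when $R_e \subseteq \Ann_R(M)$ for all $e \gg 0$, which by (ii) translates to $M_e = R_e^\perp = 0$ for those $e$. Since each $M_d \subseteq R^*_d$ is finite-dimensional, this is in turn equivalent to $M$ being finite-dimensional over $\F$, which for a graded $R$-submodule is equivalent to being finitely generated.

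For (b) $\Rightarrow$, write $M = R \contract F$ with $F \in R^*_d$ nonzero and set $I = \Ann_R(F)$. Degree considerations give $M_d = \F F$ and $M_e = 0$ for $e > d$, so $A = R/I$ has socle degree exactly $d$ with $\dim_\F A_d = 1$ by perfect duality. To see $A$ is Gorenstein, I would show that no nonzero $\bar f \in A_e$ with $e < d$ lies in the socle: any representative $f \notin I$ satisfies $f \contract F \neq 0$ in $R^*_{d-e}$, and perfect pairing at degree $d-e$ produces $r \in R_{d-e} \subseteq \fm$ with $(rf) \contract F = r \contract (f \contract F) \neq 0$, hence $\bar r\,\bar f \neq 0$ in $A$. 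For (b) $\Leftarrow$, given AG $A = R/I$ of socle degree $d$, duality forces $\dim_\F (I^\perp)_d = \dim_\F A_d = 1$; pick $F$ spanning this line. Certainly $I \subseteq \Ann_R(F)$, and any strict containment would produce a nonzero ideal of the artinian Gorenstein ring $A$, which must intersect the one-dimensional socle $A_d$ nontrivially; this gives $\bar g \neq 0 \in A_d$ with $g \contract F = 0$, contradicting the perfect pairing between $A_d$ and $(I^\perp)_d = \F F$. Hence $\Ann_R(F) = I$, and by the bijection $R \contract F = I^\perp$, so $I^\perp$ is cyclic with generator in degree $d$.

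The main technical obstacle is identity (ii), where propagating single-degree annihilation to all of $M$ requires careful use of the $R$-module structure via contraction; once this is in hand, both (a) and (b) reduce to bookkeeping with the perfect pairings, together with the standard fact that nonzero ideals of an artinian Gorenstein ring meet the socle.
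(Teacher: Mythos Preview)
Your argument is correct. The paper, however, does not actually give a proof of this theorem: it is presented as a concrete reformulation of Matlis duality (Theorem~\ref{thm:Matlis}, itself stated without proof), and pieces of it---in particular the identity $\Ann_R(F)^\perp = R\circ F$ and the Gorenstein characterization of $R/\Ann_R(F)$---are deferred to Exercise~\ref{ex: Gor}.

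Your approach is therefore genuinely different in spirit. Rather than invoking the categorical anti-equivalence $D(-)=\Hom_R(-,R^*)$, you work entirely with the degree-by-degree perfect pairings $R_d\times R^*_d\to\F$ and reduce everything to the two double-perp identities. This is more elementary and fully self-contained: no Matlis duality or injective hulls are needed, only finite-dimensional linear algebra together with the observation that contraction respects the $R$-module structure. The one place where you use something beyond pure linear algebra is the step $M_d^\perp\subseteq\Ann_R(M)_d$, and you correctly identify this as the crux and handle it via $r\contract(f\contract h)=f\contract(r\contract h)$ with $r\contract h\in M_d$. The Matlis-duality route, by contrast, packages all of this into the functorial statement $D(D(M))\cong M$ for artinian $M$, which is cleaner to state but hides exactly the computation you carry out. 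One small remark: your claim in part~(a) that ``finite-dimensional over $\F$'' is equivalent to ``finitely generated as an $R$-module'' is specific to submodules of $R^*$ under contraction (where the action lowers degree), not to arbitrary graded $R$-modules; this is implicit in your setup but worth saying explicitly.
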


The value of  \cref{thm:Mac} often lies in producing examples of artinian Gorenstein rings.

\begin{defn}
In view of statement (b) in \cref{thm:Mac}, the polynomial $F\in R^*$ is called a {\em Macaulay dual generator} for $R/\Ann_R(F)$. 
\end{defn}

\begin{ex}
The artinian Gorenstein algebra with Macaulay dual generator $$F=X^2+Y^2+Z^2$$ is the ring of \cref{ex: Gor}
$$\F[x,y,z]/\Ann_{\F[x,y,z]}(F)=\F[x,y,z]/(x^2-y^2,y^2-z^2,xy,xz,yz).$$
\end{ex}

\begin{ex}
The artinian Gorenstein algebra with Macaulay dual generator $$F=X_1^{d_1}\cdots X_n^{d_n}$$ is the monomial complete intersection
$$\F[x_1,\ldots, x_n]/\Ann_{\F[x_1,\ldots,x_n]}(F)=\F[x_1,\ldots, x_n]/(x_1^{d_1+1},\ldots,x_n^{d_n+1}).$$
\end{ex}

\begin{defn}
For a graded module $M$ and an integer $d$, define $M(d)$ to be the module $M$ with grading modified such that $M(d)_i=M_{d+i}$.
\end{defn}

\begin{exer} \label{ex: Gor}
For any homogeneous polynomial $F\in R^*$ of degree $d$, prove
\begin{enumerate}
\item $\Ann_R(F)^\perp=R\circ F$. This statement is an instance of Macaulay's double annihilator theorem.
\item  the cyclic ring  $A=R/\Ann_R(F)$ is artinian Gorenstein if and only if  the function $A\to D(A)(-d)$, $a\mapsto [b\mapsto (ab)\circ F]$ is an isomorphism.
\end{enumerate}
 {\em Hint for (1):} Start by showing the equality is true in degree $d$, then use the $R$-module structure.\\ {\em Hint for (2):} Use \cref{prop: Poincare}. Prove that the function $a \mapsto (a\contract F)(0)$ is an orientation on $A$ and that $A$ satisfies Poincar\'e duality with respect to this orientation.
\end{exer}

\noindent In view of \cref{ex: Gor} we can state an alternate definition of graded Gorenstein rings.

\begin{defn}
An artinian  graded ring $A$ is  {\em Gorenstein} of socle degree $d$ if and only if $A\cong D(A)(-d)$ as graded $A$-modules (degree preserving isomorphism).
\end{defn}

\subsection{SLP for Gorenstein rings via Hessian matrices}\label{HESS}

For this section let  $R=\F[x_1,\dots,x_n]$ be a polynomial ring and $R^*$ its graded dual. We will further assume that $\chr(\F)=0$.

In this section we use that $R^*$ is isomorphic to $\F[X_1,\ldots, X_n]$  with $R$-action $x_i\circ F=\frac{\partial F}{\partial X_i}$. We will use this description for $R^*$.

\begin{lem}\label{lem: 6.24}
Let $F\in R^*_c$ and let $L=a_1x_1+\dots+a_nx_n\in R_1$. Then $$L^c\circ F=c!\cdot F(a_1,\ldots, a_n).$$
\end{lem}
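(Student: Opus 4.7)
The plan is a direct computation using the explicit description of the $R$-action on $R^*$ by partial differentiation, namely $x_i\circ F=\partial F/\partial X_i$. The whole lemma is essentially the observation that $\frac{1}{c!}L^c$ acts as evaluation at $(a_1,\ldots,a_n)$ on degree-$c$ pieces, which is a variant of Taylor's formula, so no deep input is needed — only careful multi-index bookkeeping.

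First I would expand $L^c$ via the multinomial theorem:
$$L^c = \sum_{|\alpha|=c} \binom{c}{\alpha} a^\alpha x^\alpha,$$
where $\alpha=(\alpha_1,\ldots,\alpha_n)$ ranges over multi-indices of weight $|\alpha|=c$, with the shorthand $a^\alpha=a_1^{\alpha_1}\cdots a_n^{\alpha_n}$, $x^\alpha=x_1^{\alpha_1}\cdots x_n^{\alpha_n}$, and $\binom{c}{\alpha}=c!/\alpha!$. Next, since $F\in R^*_c$ is homogeneous of degree $c$, I would write $F=\sum_{|\beta|=c} c_\beta X^\beta$ in the monomial basis of $R^*_c$.

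The key computation is the formula
$$x^\alpha\circ X^\beta = \frac{\beta!}{(\beta-\alpha)!}X^{\beta-\alpha}\quad\text{when }\alpha\leq\beta\text{ componentwise,}$$
and zero otherwise; this follows by iterating $x_i\circ X_j^k=k X_j^{k-1}\delta_{ij}$. When $|\alpha|=|\beta|=c$, the result is degree $0$, so it is nonzero only for $\alpha=\beta$, in which case it equals $\alpha!$. Substituting and using bilinearity of the action,
$$L^c\circ F = \sum_{|\alpha|=c}\sum_{|\beta|=c}\binom{c}{\alpha}a^\alpha\, c_\beta\,(x^\alpha\circ X^\beta) = \sum_{|\alpha|=c}\binom{c}{\alpha}\alpha!\, a^\alpha c_\alpha = c!\sum_{|\alpha|=c} c_\alpha a^\alpha = c!\, F(a_1,\ldots,a_n),$$
where the crucial collapse $\binom{c}{\alpha}\cdot\alpha!=c!$ makes the answer independent of $\alpha$.

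There is no real obstacle here; the only sensitive point is ensuring the multi-index conventions for $\binom{c}{\alpha}$, $\alpha!$, and the differentiation formula are consistent, so that the factorial from differentiation exactly cancels the denominator of the multinomial coefficient. An alternative route would be induction on $c$ using the Leibniz-style relation $L\circ(L^{c-1}\circ F)=L^c\circ F$, but the direct multinomial expansion is cleaner and makes the appearance of the factor $c!$ transparent.
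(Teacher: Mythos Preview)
Your proof is correct and follows exactly the same approach as the paper: expand $L^c$ by the multinomial theorem and apply it to $F$. The paper's proof is a terse one-liner that jumps directly from the multinomial expansion to the answer, whereas you have spelled out the intermediate step of writing $F=\sum_\beta c_\beta X^\beta$ and computing $x^\alpha\circ X^\beta$ explicitly; but the argument is the same.
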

\begin{proof}
\[
L^c\circ F=\sum_{i_1+\cdots+i_n=c}\frac{c!}{i_1!\cdots i_n!}a_1^{i_1}\cdots a_n^{i_n} x_1^{i_1}\cdots x_n^{i_n}\circ F=c!\cdot F(a_1,\ldots, a_n).
\]
\end{proof}

\begin{defn}[Higher Hessians] Let $F\in R^*$ be a homogeneous polynomial and let $B=\{b_1,\ldots, b_s\}\subseteq R_d$ be a finite set of homogeneous polynomials of degree $d \geq 0$. We call 
\[
\Hess_B^d(F)=\left[b_ib_j \circ F\right]_{1\leq i,j\leq s} \text{ and } \hess_B^d(F)=\det \Hess_B^d(F)
\]
the $d$-th {\em Hessian matrix}  and the $d$-th {\em Hessian determinant} of $F$ with respect to $B$, respectively.
\end{defn}

\begin{rem}
If $B=\{x_1,\ldots x_n\}$ then $\Hess_B^1(F)=\left[x_ix_j F\right]_{1\leq i,j\leq n}=\left[\frac{\partial F}{\partial X_i \partial X_j}\right]_{1\leq i,j\leq n}$ is the classical Hessian of $F$.
\end{rem}

Hessians are useful in establishing the SLP for artinian Gorenstein rings.

\begin{thm}[Hessian criterion for SLP \cite{MW}]\label{thm: hessian criterion}
Assume $\F$ is a field of characteristic zero. Let $A$ be a graded artinian Gorenstein ring with Macaulay dual generator $F\in R^*_c$. Then $A$ has the SLP if and only if 
\[ \hess^i_{B_i}(F)\neq 0 \text{ for } 0\leq i\leq \lfloor \frac{c}{2} \rfloor \]
where $B_i$ is some (equivalently, any) basis of $A_i$.
\end{thm}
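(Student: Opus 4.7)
The plan is to convert the SLP condition into a statement about nondegeneracy of certain bilinear forms on $A_i$ via Poincaré duality, and then identify the Gram matrices with the Hessian matrices evaluated at $L$.

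First, because $A$ is artinian Gorenstein, its Hilbert function is symmetric, and so by \cref{equiv narrow SLP} the SLP is equivalent to SLPn. Thus it suffices to find $L \in A_1$ such that the multiplication maps $\times L^{c-2i}: A_i \to A_{c-i}$ are bijections for $0\leq i \leq \lfloor c/2\rfloor$. By the Poincaré duality pairing $A_i \times A_{c-i} \to A_c \cong \F$ (\cref{prop: Poincare}), composing the multiplication map with the duality yields a symmetric bilinear form
\[
\beta_L^i: A_i \times A_i \to \F, \quad (a,b)\mapsto \int_A ab L^{c-2i},
\]
and the map $\times L^{c-2i}$ is bijective if and only if $\beta_L^i$ is nondegenerate, i.e.\ iff the Gram matrix $\left[\beta_L^i(b_j,b_k)\right]_{b_j,b_k\in B_i}$ is nonsingular.

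Next I will compute this Gram matrix. Using the identification from \cref{thm:Mac} and \cref{ex: Gor}, the orientation $\int_A: A_c\to\F$ may be realized as $g\mapsto g\circ F$ (which is a constant since $\deg g = \deg F = c$), up to a nonzero scalar. By the associativity of the contraction action, for $b_j,b_k\in B_i\subseteq R_i$ and $L\in R_1$,
\[
\int_A b_jb_k L^{c-2i} = (b_jb_kL^{c-2i})\circ F = L^{c-2i}\circ\big((b_jb_k)\circ F\big).
\]
The polynomial $H_{jk}:=(b_jb_k)\circ F\in R^*$ has degree $c-2i$, so \cref{lem: 6.24} gives $L^{c-2i}\circ H_{jk} = (c-2i)!\cdot H_{jk}(L)$, where $H_{jk}(L)$ denotes evaluation at the coordinates of $L = a_1x_1+\cdots+a_nx_n$. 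Consequently the Gram matrix of $\beta_L^i$ is
\[
(c-2i)! \cdot \Hess^i_{B_i}(F)\big|_{L},
\]
that is, the Hessian matrix evaluated at the point corresponding to $L$, up to a nonzero scalar.

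Now I can conclude. The algebra $A$ has SLP in the narrow sense iff there exists a point $L=(a_1,\ldots,a_n)\in\F^n$ such that for every $0\leq i \leq \lfloor c/2\rfloor$ the evaluation $\hess^i_{B_i}(F)(L)$ is nonzero. Since $\F$ has characteristic zero (hence is infinite), the product polynomial $\prod_{i=0}^{\lfloor c/2\rfloor}\hess^i_{B_i}(F)$ takes a nonzero value at some point of $\F^n$ iff it is not the zero polynomial, which in turn is equivalent to each factor $\hess^i_{B_i}(F)$ being a nonzero polynomial. This proves the equivalence. The basis-independence of the nonvanishing condition is automatic: changing $B_i$ by an invertible matrix $P$ replaces $\Hess^i_{B_i}(F)$ by $P^{\top}\Hess^i_{B_i}(F)\,P$, hence only scales $\hess^i_{B_i}(F)$ by $(\det P)^2\neq 0$.

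The main technical obstacle is the identification of the Gram matrix with the Hessian; everything else is a matter of packaging well-chosen dualities. The key inputs are the associativity of contraction (so that $L^{c-2i}$ can be moved across the pairing) together with \cref{lem: 6.24}, which converts the action of $L^{c-2i}$ on a polynomial into evaluation at $L$.
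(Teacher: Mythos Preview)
Your proof is correct and follows essentially the same approach as the paper: reduce SLP to SLPn via the symmetric Hilbert function, compose $\times L^{c-2i}$ with the Poincar\'e/Macaulay duality $A_{c-i}\cong A_i^*$, compute the resulting matrix entries as $(L^{c-2i}b_jb_k)\circ F$, and apply \cref{lem: 6.24} to identify this with $(c-2i)!\cdot\Hess^i_{B_i}(F)$ evaluated at the coefficients of $L$. Your use of the product polynomial to pass from ``some $L$ works for each $i$'' to ``some $L$ works for all $i$ simultaneously,'' and your explicit check of basis independence, are nice touches that the paper leaves implicit.
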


\begin{proof}
From the hypothesis and \cref{thm:Mac}
we have that $A=R/\Ann_R(F)$ has socle degree $d=\deg(F)$.

Since $A$ is Gorenstein, $A$ has symmetric Hilbert function, so by \Cref{equiv narrow SLP} $A$ has SLP if and only if $A$ has $SLP$ in the narrow sense, i.e. there exists $L\in A_1$ such that for any $0\leq i\leq \lfloor \frac{d}{2}\rfloor$ the multiplication maps $L^{c-2i}:A_i\to A_{c-i}$ are vector space isomorphisms. Say $L=a_1x_1+\dots+a_nx_n$.

Recall that the isomorphism $A\cong D(A)(-c)=(R\circ F)(-c), a\mapsto a\circ F$ induces vector space isomorphisms $A_{c-i}\cong A_{i}^*$ also defined by $a\mapsto \left[b\mapsto b\circ (a\circ F)=(ba)\circ F\right]$; see \Cref{ex: Gor}. This isomorphism is denoted $- \circ F$ in the sequence displayed below. The composite map
\[
T_i:A_i\stackrel{\times L^{c-2i}}{\lra} A_{c-i} \stackrel{-\circ F}{\lra} A_i^*
\]
is an isomorphism if and only if multiplication by $L^{c-2i}$ is an isomorphism. Let $B_i$ be a basis for $A_i$ and let $B^*_i$ be its dual, which is a basis for $A_i^*$. The matrix $[t^{(i)}_{jk}] $for $T_i$ with respect to these bases is defined as follows
\[
T_i(b_j)=\sum_{k=1}^s t^{(i)}_{jk}b_k^*.
\]
Hence  we compute
\[
t^{(i)}_{jk}=T_i(b_j)(b_k)=(L^{c-2i}b_j)^* (b_k)=(L^{c-2i}b_j b_k) \circ F=L^{c-2i}\circ(b_j b_k \circ F).
\] 
Using \Cref{lem: 6.24}  the formula above becomes
\[
t^{(i)}_{jk}=(c-2i)!(b_jb_k\circ F)(a_1,\ldots, a_n).
\]
Thus $T_i$ is an isomorphism for some $L\in R_1$ if and only if  
$$\hess^i_{B_i}F(a_1,\ldots, a_n)=\det\left[b_ib_j \circ F(a_1,\ldots, a_n)\right]_{1\leq i,j\leq s}\neq 0.$$

Overall the SLP holds if and only if for  $ 0\leq i\leq \lfloor \frac{c}{2} \rfloor$ the hessian determinant $\hess^i_{B_i}F$ does not vanish identically.
\end{proof}

\begin{ex}
Say $F=X^{2}+Y^{2}+Z^{2}$. Then with respect to the standard monomial basis for each $R_i$
\begin{eqnarray*}
\hess^0(F) &=&F \\
\hess^1(F) &=&\det \begin{bmatrix} 2 & 0 & 0 \\ 0 & 2 &0 \\ 0 & 0 & 2\end{bmatrix} =8\\
\hess^i(F) &=&0 \text{ for }i\geq 2.
\end{eqnarray*}
This shows the algebra in \cref{exer: Gor} has SLP in characteristic zero.
\end{ex}

\begin{ex}[H. Ikeda \cite{Ikeda}]
\label{ex:Ikeda}
Let $G=XYW^3+X^3ZW+Y^3Z^2$. Then $A=R/
\Ann_R(G)$ has Hilbert function $(1, 4, 10, 10, 4, 1)$ and a basis for $A_1$ is $B_1=\{x, y, z,w\}$ whereas a basis for $A_2$ is $B_2=\{x^2, xy, xz, xw, y^2, yz, yw, z^2, zw, w^2\}$. Furthermore
\begin{eqnarray*}
\hess^0(G) &=&G \\
\hess^1_{B_1}(G) &=&\det \begin{pmatrix}
     6XZW & W^3 & 3X^2W & 3X^2Z+3YW^2\\
     W^3 & 6YZ^2 & 6Y^2Z &3XW^2\\
     3X^2W & 6Y^2Z & 2Y^3 & X^3\\
     3X^2Z+3YW^2 & 3XW^2 &X^3 & 6XYW
     \end{pmatrix}\neq 0 \\
  \hess^2_{B_2}(G) &=&\det \left(\begin{matrix}
   0&0&6\,W&6\,z&0&0&0&0&6\,X&0\\
  0&0&0&0&0&0&0&0&0&6\,W\\
   6\,W&0&0&6\,X&0&0&0&0&0&0\\
   6\,Z&0&6\,X&0&0&0&6\,W&0&0&6\,Y\\
  0&0&0&0&0&12\,Z&0&12\,Y&0&0\\
  0&0&0&0&12\,Z&12\,Y&0&0&0&0\\
   0&0&0&6\,W&0&0&0&0&0&6\,X\\
   0&0&0&0&12\,Y&0&0&0&0&0\\
  6\,X&0&0&0&0&0&0&0&0&0\\
    0&6\,W&0&6\,Y&0&0&6\,X&0&0&0\\
     \end{matrix}\right) 
=0.
\end{eqnarray*}
We conclude that the map $L:A_2\to A_3$ fails to have maximum rank for all $L\in A_1$. However the map $L^3:A_1\to A_4$ does have maximum rank.
\end{ex}

\begin{exer}[R. Gondim \cite{Gondim}]\label{ex: Gondim}
Let $x_1,\ldots, x_n$ and $u_1, \ldots, u_m$ be two sets of indeterminates
with $n \geq m \geq 2$. Let $f_i \in \F[x_1, \ldots, x_n]_k$ and $g_i \in \F[u_1,\ldots, u_m]_e$ for $1\leq i\leq s$
be linearly independent forms with $1 \leq k<e$. If $s >\binom{m-1+k}{k}$, then 
\[
F=f_1g_1+\cdots+f_sg_s
\]
is called a Perazzo form and $A=\F[x_1, \ldots, x_n,u_1,\ldots,u_m]$ is called a Perazzo algebra. 
\begin{enumerate}
\item  Show that  $\hess_k(F)=0$ and so $A$ does not have SLP.
\item Make conjectures regarding the Hilbert functions of Perazzo algebras. 
\item Make conjectures regarding the WLP for  Perazzo algebras. 
\item Do there exist two Perazzo algebras $A$ and $B$ having the same Hilbert function so that $A$ has WLP and $B$ does not?
\end{enumerate}
Some answers to (2) and (3) can be found in  \cite{AADFMMMN}. Part (4) is an open problem suggested by Lisa Nicklasson.
\end{exer}

\begin{cor}
Let $F\in \F[X_1,\ldots, X_n], G\in \F[Y_1,\ldots, Y_m]$ be homogeneous polynomials of the same degree. Then $A=\F[x_1,\ldots, x_n]/\Ann_R(F)$ and $B= \F[y_1,\ldots, y_m]/\Ann_R(G)$ have SLP if and only if 
\[
C= \F[x_1,\ldots, x_n,y_1,\ldots, y_m]/\Ann_R(F+G) \text{ satisfies SLP}.
\]
\end{cor}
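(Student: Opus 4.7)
The plan is to apply the Hessian criterion (\cref{thm: hessian criterion}) to all three algebras and reduce the Hessian determinants of the Macaulay dual generator of $C$ to products of Hessian determinants of $F$ and of $G$. By \cref{thm:Mac}, all three of $A$, $B$, $C$ are artinian Gorenstein of the same socle degree $d := \deg F = \deg G = \deg(F+G)$, so the Hessian criterion applies uniformly.

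First I would describe $C$ concretely using that $F$ and $G$ involve disjoint sets of divided variables: $x_i \circ G = 0$ and $y_j \circ F = 0$ for all $i,j$, so every product $x_i y_j$ lies in $\Ann_R(F+G)$. From this, a pure-$x$ polynomial of positive degree lies in $\Ann_R(F+G)$ if and only if it lies in $\Ann_{\F[x]}(F)$, and symmetrically for pure $y$. Consequently, for $1 \le i \le d-1$, projection yields a vector space isomorphism
\[
C_i \;\cong\; A_i \,\oplus\, B_i,
\]
with the two summands represented by pure-$x$ and pure-$y$ classes respectively; for $i=0$ both sides are $\F$, and at $i=d$ the socles of $A$ and $B$ are identified in $C_d \cong \F$.

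Next I would fix compatible bases $B_i^A \subset \F[x]_i$ of $A_i$ and $B_i^B \subset \F[y]_i$ of $B_i$, and take $B_i^C := B_i^A \sqcup B_i^B$ as a basis of $C_i$ for $1 \le i \le d-1$. The key calculation is then to inspect the entries of $\Hess^i_{B_i^C}(F+G)$. For $b_j, b_k$ both in $B_i^A$ the product $b_j b_k$ is pure in $x$, so $b_j b_k \circ (F+G) = b_j b_k \circ F$; for both in $B_i^B$ it equals $b_j b_k \circ G$; for a mixed pair the product is divisible by some $x_a y_b$, hence annihilates both $F$ and $G$, and the entry vanishes. Therefore the matrix is block-diagonal:
\[
\Hess^i_{B_i^C}(F+G) \;=\; \Hess^i_{B_i^A}(F) \,\oplus\, \Hess^i_{B_i^B}(G),
\]
so $\hess^i_{B_i^C}(F+G) = \hess^i_{B_i^A}(F) \cdot \hess^i_{B_i^B}(G)$.

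Finally I would conclude via the Hessian criterion. For $i=0$ the basis is $\{1\}$ and $\hess^0(F+G) = F+G$, $\hess^0(F) = F$, $\hess^0(G) = G$ are all nonzero. For $1 \le i \le \lfloor d/2 \rfloor$, which lies in the range $1 \le i \le d-1$ where the basis description applies, the product formula shows that $\hess^i_{B_i^C}(F+G) \ne 0$ if and only if both $\hess^i_{B_i^A}(F) \ne 0$ and $\hess^i_{B_i^B}(G) \ne 0$. Applying \cref{thm: hessian criterion} separately to $A$, $B$, and $C$ then yields the equivalence. The only step that requires care is the identification $C_i \cong A_i \oplus B_i$ and the resulting choice of basis; once this is pinned down, the block-diagonal structure of the Hessian, and hence the corollary, follows from a direct contraction computation.
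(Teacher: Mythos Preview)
Your proof is correct and follows essentially the same route as the paper: establish that $C_i\cong A_i\oplus B_i$ for $1\le i\le d-1$, deduce the block-diagonal form of $\Hess^i(F+G)$, and conclude via the Hessian criterion. The only difference is expository---you verify the basis decomposition directly from the disjoint-variable contraction, whereas the paper defers this to the later results on fiber products and connected sums (\cref{prop:ConnSumF} and \eqref{exactFP}).
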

\begin{proof}
It turns out that for $1\leq i< \deg(F)$ a basis $\beta$ of $C_i$ is given by the union of a basis $\beta'$ of $A_i$ and a basis $\beta''$ of $B_i$ (for a proof of this refer to \cref{prop:ConnSumF} and \cref{exactFP}) and hence the hessians of $F+G$ look like
\begin{eqnarray*}
\Hess^i(F+G) &=& \begin{bmatrix} b_ib_j(F+G)\end{bmatrix}_{b_i,b_j\in \beta}=\begin{bmatrix} b'_ib'_j(F) & 0\\ 0 & b''_ib''_j(F) \end{bmatrix}_{b'_i,b'_j\in \beta',b''_i,b''_j\in \beta''}\\
& =&\begin{bmatrix} \Hess^i(F) & 0 \\ 0 & \Hess^i(G)\end{bmatrix}\\
\hess^i(F+G) &=&\hess^i(F)\hess^i(G).
\end{eqnarray*}
Now we see that $\hess^i(F+G)\neq 0$ if and only if $\hess^i(F)\neq 0$ and $\hess^i(G)\neq 0$, which gives the desired conclusion.
\end{proof}

We take the preceding example up again in the following section, generalizing the construction that produces $C$ from $A$ and $B$ in \Cref{Def_CS}.

\section{Topological ring constructions  and the Lefschetz properties} \label{s: 7}

We have seen in \cref{s: 1} that the Lefschetz properties emerged from algebraic topology. Now we return to this idea implementing some constructions that originate in topology at the ring level. The material in \cref{s:FP} is taken from \cite{IMS} and the material in  \cref{s: BUG} is taken from \cite{IMMSW}.

\subsection{Fiber products and connected sums}\label{s:FP}

We first consider the operation termed connected sum. A connected sum of manifolds along a disc is obtained by identifying a disk in each (with opposite orientations). One can more generally take connected sums by identifying two homeomorphic sub-manifolds, one from each summand. If the  cohomology rings of the two summands  are $A$ and $B$ and the cohomology ring of the common submanifold is $T$, then it turns out that the cohomology ring of the connected sum is $A\#_TB$, a ring  that we term the connected sum of  $A$ and $B$ over $T$ in \cref{Def_CS}.

To define a connected sum of rings we need a preliminary construction. Recall that an {\em oriented} AG algebra is a pair $(A,\int_A)$ with $A$ an AG algebra and $\int_A$ an orientation as in \cref{prop: Poincare}.
A choice of orientation on $A$ also corresponds to a choice of Macaulay dual generator.

\begin{exer}
 Every orientation on $A$ can be written as the function $\int_A:A\to K$ defined by $\int_A g =(g\circ F)(0)$ for some Macaulay dual generator $F$ of $A$. The notation $(g\circ F)(0)$ refers to evaluating the element $g\circ F$ of $R'$ at $X_1=\cdots=X_n=0$.
\end{exer}

Next we discuss how the orientations of two AG algebras relate.

\begin{defn}[Thom class]
\label{def:ThomClass}
 Let $(A,\int_A)$ and $(T,\int_T)$ be two oriented AG $K$-algebras with socle degree $d$ for $A$ and $k$ for $T$, respectively, with $d\geq k$. Let $\pi: A \to T$ be a graded map. By \cite[Lemma 2.1]{IMS}, there exists a unique homogeneous element $\tau_A \in A_{d-k}$  such that $\int_A(\tau _A a)=\int_T(\pi (a))$ for all $a\in A$ ; we call it the {\em Thom class} for $\pi : A \to  T$. 
\end{defn}

Note that the  Thom class for $\pi : A \to  T$ depends not only on the map $\pi $, but also on the orientations chosen for $A$ and $T$.

\begin{ex}
 Let $(A,\int_A)$ be an oriented AG $K$-algebra with socle degree $d$. Consider $(K,\int_K)$ where $f_K:K\to K$ is the identity map. Then the Thom class for the canonical projection $\pi:A\to K$ is the unique element $a_{soc}\in A_d$ such that $\int_A a_{soc}=1$.  
\end{ex}

\begin{exer}
Given a homomorphism  $\pi: A\to T$ of AG algebras having dual generators $F, H$ of degrees  $d$ and $k$, respectively, with $d\geq k$, show that the Thom class of \cref{def:ThomClass} is the unique element $\tau$ of $A_{d-k}$ such that $\tau\circ F=H$.
\end{exer}

\begin{defn}
\label{def:fiberProduct}
	Given graded $\F$-algebras $A$, $B$, and $T$, and graded $\F$-algebra maps $\pi_A\colon A\rightarrow T$ and $\pi_B\colon B\rightarrow T$, the \emph{fiber product} of $A$ and $B$ over $T$ (with respect to $\pi_A$ and $\pi_B$) is the graded $\F$-subalgebra of $A\oplus B$
	$$A\times_T B=\left\{(a,b)\in A\oplus B \ \left| \ \pi_A(a)=\pi_B(b)\right.\right\}.$$
\end{defn}  
Let $\rho_1\colon A\times_TB\rightarrow A$ and $\rho_2\colon A\times_T B\rightarrow B$ be the natural projection maps.  It is well known that fiber products are pullbacks in the category of $\F$ algebras and hence they satisfy the following 
universal property.
\begin{lem}
	\label{lem:UnivProp}
	The fiber product $A\times_TB$ satisfies the following universal property:  If $C$ is another $\F$-algebra with maps $\phi_1\colon C\rightarrow A$ and $\phi_2\colon C\rightarrow B$ such that $\pi_A\circ\phi_1(c)=\pi_B\circ\phi_2(c)$ for all $c\in C$, then there is a unique $\F$-algebra homomorphism $\Phi\colon C\rightarrow A\times_TB$ which makes the diagram below commute:
	\begin{equation}
	\label{eq:UP}
	\xymatrix{C \ar@{-->}[dr]^{\Phi} \ar@/^1pc/[drr]^-{\phi_1}\ar@/_1pc/[ddr]_-{\phi_2}& & \\ & A\times_TB\ar[r]^-{\rho_1}\ar[d]_-{\rho_2} & A\ar[d]^-{\pi_A}\\ & B\ar[r]_-{\pi_B} & T.\\}
	\end{equation} 
\end{lem}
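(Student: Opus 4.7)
The plan is to construct the map $\Phi$ explicitly using the components $\phi_1,\phi_2$, verify it lands in the fiber product by invoking the compatibility hypothesis, and then deduce uniqueness from the commutativity requirements. Since the fiber product is a subobject of the direct sum, the construction essentially has only one candidate, and the proof amounts to checking that it works.

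Concretely, I would define $\Phi \colon C \to A \times_T B$ by the formula
\[
\Phi(c) = (\phi_1(c), \phi_2(c)).
\]
First I would verify that the image lies in $A \times_T B$: given $c \in C$, the hypothesis $\pi_A \circ \phi_1(c) = \pi_B \circ \phi_2(c)$ is exactly the defining condition of $A \times_T B$ from \cref{def:fiberProduct}, so $(\phi_1(c),\phi_2(c))$ is a legitimate element of the fiber product. Next I would check that $\Phi$ is a graded $\F$-algebra homomorphism. This is inherited componentwise: since $\phi_1$ and $\phi_2$ are graded $\F$-algebra homomorphisms, and addition, multiplication, scalar multiplication, and the grading on $A \oplus B$ are all defined componentwise, $\Phi$ automatically respects these structures. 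For the unit, $\Phi(1_C) = (\phi_1(1_C),\phi_2(1_C)) = (1_A,1_B)$, which is the identity of $A \times_T B$.

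Next I would verify the commutativity of the diagram \eqref{eq:UP}. By construction, $\rho_1 \circ \Phi(c) = \rho_1(\phi_1(c),\phi_2(c)) = \phi_1(c)$, and similarly $\rho_2 \circ \Phi = \phi_2$. The outer triangles commute by hypothesis, so the entire diagram commutes.

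Finally, for uniqueness, suppose $\Psi \colon C \to A \times_T B$ is another $\F$-algebra homomorphism making the diagram commute. Writing $\Psi(c) = (a,b)$ for some $a \in A$ and $b \in B$, the commutativity conditions $\rho_1 \circ \Psi = \phi_1$ and $\rho_2 \circ \Psi = \phi_2$ force $a = \phi_1(c)$ and $b = \phi_2(c)$, whence $\Psi(c) = \Phi(c)$ for all $c \in C$. The only conceivable obstacle here is a bookkeeping one---making sure the grading is preserved and that the componentwise ring operations on $A\times_T B$ inherited from $A\oplus B$ behave as expected---but since everything in sight is graded and the fiber product is by definition a graded subalgebra of $A\oplus B$, no new issue arises. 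The argument is entirely formal and works in any category with products and equalizers.
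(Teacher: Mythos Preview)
Your argument is correct and is exactly the standard verification of the pullback universal property. The paper does not actually prove this lemma; it simply states before the lemma that ``fiber products are pullbacks in the category of $\F$ algebras and hence they satisfy the following universal property,'' treating the result as well known. Your explicit construction and verification is precisely what one would write if asked to supply the omitted details.
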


By \cite[Lemma 3.7]{IMS} the fiber product is  characterized by the following exact sequence of vector spaces:
\begin{equation}\label{exactFP}
    0 \to    A \times _T B \to A\oplus B \xrightarrow{\pi_A-\pi_B} T\to 0,
\end{equation}
whence the Hilbert function of the fiber product satisfies
\begin{equation}\label{eq: HF FP}
H_{ A \times _T B}=H_A+H_B-H_T.
\end{equation}

Henceforth we assume that $\pi_A(\tau_A) = \pi_B(\tau_B)$, so that $(\tau_A,\tau_B) \in A \times _T B$.

\begin{defn}\label{Def_CS}
    The {\em connected sum} of the oriented AG $K$-algebras $A$ and $B$ over $T$ is the quotient ring of the fiber product 
    \[A\times _T B
    := \{(a, b) \in A\oplus B \mid  \pi_A(a) = \pi _B(b) \}\]
    by the principal
ideal generated by the pair of Thom classes 
$(\tau_A, \tau_B)$, i.e.
$$ A \#_TB =
(A \times_T B)/
\langle (\tau_A, \tau_B) \rangle .$$
\end{defn}

By \cite[Lemma 3.7]{IMS} the connected sum is  characterized by the following exact sequence of vector spaces:
\begin{equation}\label{exactCS}
    0 \to  T(k - d) \to  A \times _T B \to  A\# _T B \to  0.
\end{equation}

Therefore, the Hilbert series of the connected sum satisfies
\begin{equation}\label{HilbertCS}
    HF_{A\# _T B}( t) = HF_A( t) + HF_B( t)- (1 + t^{d-k})HF_T( t).
\end{equation}

When $T=\F$ we have an easy description of the fiber product and connected sum.

\begin{prop}\label{prop:ConnSumF}
	Let $R=\F[x_1,\ldots, x_n]$, $R'=\F[y_1,\ldots, y_m]$ be  polynomial rings.  Let $\left(A=R/I,\int_A\right)$ and $\left(B=R'/I',\int_B\right)$ be oriented AG algebras each with socle degree $d$, and let $\pi_A\colon A\rightarrow\F$ and $\pi_B\colon B\rightarrow \F$ be the natural projection maps with Thom classes $\tau_A\in A_d$ and $\tau_B\in B_d$.  Then the fiber product $A\times_\F B$ has a presentation
	$$A\times_\F B\cong \frac{\F[x_1,\ldots,x_n,y_1,\ldots, y_m]}{(x_iy_j\mid 1\leq i\leq n, 1\leq j\leq m)+I+I'}$$
	and the connected sum $A\#_\F B$  has a presentation
	$$A\#_\F B\cong \frac{\F[x_1,\ldots,x_n,y_1,\ldots, y_m]}{(x_iy_j\mid 1\leq i\leq n, 1\leq j\leq m)+I+I'+(\tau_A+\tau_B)}.$$
	In particular, if $A$ and $B$ are standard graded then so are $A\times_\F B$ and $A\#_\F B$.
\end{prop}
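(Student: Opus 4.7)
The plan is to construct an explicit graded ring map from each candidate quotient down to the fiber product (resp.\ connected sum), and then show it is an isomorphism by combining a direct surjectivity argument with a Hilbert function count coming from the exact sequence \eqref{exactFP}.

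For the fiber product, I would define a graded $\F$-algebra map
\[
\Phi\colon \F[x_1,\ldots,x_n,y_1,\ldots,y_m]\longrightarrow A\oplus B, \qquad \Phi(x_i)=(x_i,0),\ \Phi(y_j)=(0,y_j).
\]
Since $x_i$ and $y_j$ have positive degree and $T=\F$ is concentrated in degree $0$, each image satisfies $\pi_A(x_i)=0=\pi_B(0)$ and similarly for the $y_j$, so $\Phi$ lands in $A\times_\F B$. One checks $\Phi(x_iy_j)=(x_i,0)(0,y_j)=(0,0)$, and for any homogeneous $f\in I$ of positive degree $\Phi(f)=(f,0)=(0,0)$ in $A\oplus B$; similarly for $I'$. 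Hence $\Phi$ descends to $\bar\Phi$ on the quotient $Q:=\F[x_1,\ldots,x_n,y_1,\ldots,y_m]/((x_iy_j)+I+I')$. For surjectivity, any $(a,b)\in A\times_\F B$ with common constant term $c$ writes as $(c,c)+(a-c,0)+(0,b-c)$, and each summand visibly lies in the image. For injectivity, note that in $Q$ the relation $x_iy_j=0$ forces each positive-degree monomial to involve only $x$'s or only $y$'s; combined with $I$ and $I'$ this gives $Q_0\cong \F$ and $Q_i\cong A_i\oplus B_i$ for $i\ge 1$. Comparing with \eqref{eq: HF FP}, which reads $H_{A\times_\F B}=H_A+H_B-1$, we see that $\bar\Phi$ is a surjection between graded vector spaces of the same finite dimension in each degree, hence an isomorphism.

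For the connected sum, I would lift the distinguished element $(\tau_A,\tau_B)\in A\times_\F B$ through $\bar\Phi$. Choose any homogeneous lifts $\hat\tau_A\in \F[x_1,\ldots,x_n]_d$ and $\hat\tau_B\in \F[y_1,\ldots,y_m]_d$ of $\tau_A$ and $\tau_B$. Then $\bar\Phi(\hat\tau_A+\hat\tau_B)=(\tau_A,0)+(0,\tau_B)=(\tau_A,\tau_B)$, so the principal ideal $\langle (\tau_A,\tau_B)\rangle$ in $A\times_\F B$ corresponds under the isomorphism to the image of the principal ideal $(\hat\tau_A+\hat\tau_B)$ in $Q$. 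Applying the third isomorphism theorem to \cref{Def_CS} yields the desired presentation of $A\#_\F B$, where by a mild abuse I write $\tau_A+\tau_B$ for the lift $\hat\tau_A+\hat\tau_B$. Standard-gradedness is inherited because the only relations introduced, namely $x_iy_j$ and the homogeneous generators of $I$, $I'$, $(\hat\tau_A+\hat\tau_B)$, are all homogeneous of positive degree in the standard grading when $A$ and $B$ are standard graded.

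The main bookkeeping obstacle is simply that a ring map must send $1$ to $1$, so $\Phi$ is not literally the naive componentwise inclusion: $\Phi(c)=(c,c)$ rather than $(c,0)$. This is why the argument works cleanly only because $(\tau_A,\tau_B)$, the $x_iy_j$, and all elements of $I,I'$ live in strictly positive degree, where the asymmetry of $\Phi$ does not interfere. Once this is accounted for, everything reduces to a Hilbert function verification, which is immediate from \eqref{exactFP} and the observation that the relations $x_iy_j=0$ kill all mixed monomials in $Q$.
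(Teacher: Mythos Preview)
Your argument is correct. The paper states this proposition without proof, so there is no reference argument to compare against; the explicit map $\Phi$ together with the Hilbert function count from \eqref{eq: HF FP} is the natural route, and your passage to the connected sum by lifting the Thom classes and invoking the third isomorphism theorem is clean. The only cosmetic point is that your remark about $\Phi(1)=(1,1)$ already handles the degree-zero bookkeeping, so the final paragraph is more of a commentary than a necessary step.
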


\begin{ex}[Standard graded fiber product and connected sum]\label{ex:FPEx}
	Let $A=\F[x,y]/(x^2,y^4)$ and $B=\F[u,v]/(u^3,v^3)$ each with the standard grading $\deg(x)=\deg(y)=\deg(u)=\deg(v)=1$.  Let $T=\F[z]/(z^2)$, and define maps $\pi_A:A\rightarrow T$, $\pi_A(x)=z, \ \pi_A(y)=0$ and $\pi_B\colon B\rightarrow T$, $\pi_B(u)=z, \ \pi_B(v)=0$.  Then the fiber product $A\times_TB$ is generated as an algebra by elements $z_1=(y,0)$, $z_2=(x,u)$, and $z_3=(0,v)$, all having degree one.  One can check that it has the following presentation:
	\begin{equation}
	\label{eq:FPEx}
	A\times_TB=\frac{\F[z_1,z_2,z_3]}{\left\langle z_1^4,z_2^3,z_3^3, z_1z_3,z_1z_2^2\right\rangle}.
	\end{equation}
	The Hilbert function of the fiber product is 
	\begin{align*}
	H(A\times_TB)= & (1,3,5,4,2)\\
	= & (1,2,2,2,1)+(1,2,3,2,1)-(1,1,0,0,0)\\
	= & H(A)+H(B)-H(T).
	\end{align*}
	
	Fix orientations on $A$, $B$, and $T$ by $\int_A\colon xy^3\mapsto 1$, $\int_B\colon u^2v^2\mapsto 1$, and $\int_T\colon z\mapsto 1$, respectively.  Then the Thom classes for $\pi_A\colon A\rightarrow T$ and $\pi_B\colon B\rightarrow T$ are, respectively, $\tau_A=y^3$, $\tau_B=uv^2$.  Note that $\pi_A(\tau_A)=0=\pi_B(\tau_B)$, hence $(\tau_A,\tau_B)\in A\times_TB$, and in terms of Presentation \eqref{eq:FPEx} we have $(\tau_A,\tau_B)=z_1^3+z_2z_3^2$.  Therefore we see that 
	\begin{equation}
	\label{eq:CSEx}
	A\#_TB=\frac{\F[z_1,z_2,z_3]}{\left\langle z_1^4,z_2^3,z_3^3, z_1z_3,z_1z_2^2,z_1^3+z_2z_3^2\right\rangle}.
	\end{equation}
	The Hilbert function of the connected sum is 
	\begin{align*}
	H(A\#_TB)= & (1,3,5,3,1)\\
	= & (1,2,2,2,1)+(1,2,3,2,1)-(1,1,0,0,0)-(0,0,0,1,1)\\
	= & H(A)+H(B)-H(T)-H(T)[3],
	\end{align*}
	where $H(T)[3]$ is the Hilbert function of $T(-3)$.
\end{ex}

However, if $T\neq \F$ the presentation of the connected sum and fiber product can be complicated and they need not be standard graded.

\begin{ex}[Non-standard graded fiber product and connected sum]
\label{ex:FP3}
Let 
$$A=\F[x]/(x^4), \ B=\F[u,v]/(u^3,v^2), \ T=\F[z]/(z^2),$$  have Hilbert functions $H(A)=(1,1,1,1)$ and $H(B)=(1,2,2,1)$.
Define maps $\pi_A\colon A\rightarrow T$, $\pi_A(x)=z$ and $\pi_B\colon B\rightarrow T$, $\pi_B(u)=z$, $\pi_B(v)=0$.  Then the fibered product has the presentation
$$A\times_TB=\frac{\F[z_1,z_2,z_3]}{\left(z_1^4,z_2^2,z_3^2,z_1z_3,z_1^2z_2-z_2z_3\right)}, \ \ \text{where} \begin{cases} z_1= & (x,u)\\
z_2= & (0,v)\\
z_3= & (0,u^2).
\end{cases}$$
 Here $z_1,z_2$ have degree one, and $z_3$ has degree two. 
 We then have a presentation for the connected sum $C=A\#_TB=A\times_T B/(\tau)$,
whence
\begin{align*}
A\#_TB \cong & \frac{\F[z_1,z_2,z_3]}{\left(z_1^4,z_2^2,z_3^2,z_1z_3,z_1^2z_2-z_2z_3,(z_1^2-z_3)+z_1z_2\right)}\cong \frac{\F[z_1,z_2]}{(z_1^3+z_1^2z_2,z_2^2)}.
\end{align*}
It has Hilbert function $H(C)=(1,2,2,1)=H(A)+H(B)-H(T)-H(T)[1]$ as in \eqref{HilbertCS}.  It is interesting to note that the connected sum $A\#_TB$ has a standard grading whereas the fibered product $A\times_TB$ does not.  
\end{ex}

Finally, we have the following result which shows how the Lefschetz properties of the components influence the Lefschetz property of the fiber product and connected sum.

\begin{thm}
	\label{prop:SLPFP}
	\begin{enumerate}
\item If $A$ and $B$ are artinian Gorenstein algebras of the same socle degree that each have the SLP, then the fiber product $D=A\times_\F B$ over a field $\F$  also has the SLP.  If $A$ and $B$ have the standard grading, then the converse holds as well. 	
\item If $A$ and $B$ both have the SLP, then the connected sum $C=A\#_\F B$ over a field $\F$ also has the SLP.  If $A$ and $B$ have the standard grading, then the converse holds as well.
\item Let $A, T$ be artinian Gorenstein algebras with socle degrees $d, k$ respectively and let $\pi_A\colon A\rightarrow T$ be a surjective ring homomorphism such that its Thom class $\tau_A$ satisfies $\pi_A(\tau_A)=0$.  Let $x$ be an indeterminate of degree one, set $B=T[x]/(x^{d-k+1})$, and define $\pi_B\colon B\rightarrow T$ to be the natural projection map satisfying $\pi_B(t)=t$ and $\pi_B(x)=0$.  In this setup, if $A$ and $T$ both satisfy the SLP,  then the fiber product $A\times_T B$ also satisfies the SLP.  Moreover if the field $\F$ is algebraically closed, then the connected sum $A\#_T B$  also satisfies the SLP.
\item Let $A$ and $B$ be standard graded artinian Gorenstein algebras of socle degree $d$ satisfying the SLP, and let $T$ be a graded artinian Gorenstein algebra of socle degree $k$, with $k<\lfloor \frac{d-1}{2} \rfloor$,
endowed with surjective $\F$-algebra homomorphisms $\pi_A:A\to T$ and $\pi_B:B\to T$. Then the resulting fiber product  $A\times_TB$ and the connected sum $A\#_T B$ both satisfy the WLP.
\end{enumerate}
\end{thm}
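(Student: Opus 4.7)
My strategy is to leverage the two fundamental short exact sequences
\[
0 \to A \times_T B \to A \oplus B \xrightarrow{\pi_A - \pi_B} T \to 0 \quad \text{and} \quad 0 \to T(k - d) \to A \times_T B \to A \#_T B \to 0
\]
to translate Lefschetz questions on the fiber product and connected sum into questions on the building blocks. For any pair $L_A \in A_1, L_B \in B_1$ with $\pi_A(L_A) = \pi_B(L_B)$, the element $L = (L_A, L_B)$ lies in $(A \times_T B)_1$, and multiplication by powers $L^j$ produces ladders between these exact sequences. The snake lemma then relates kernels and cokernels of multiplication on $A \times_T B$ and $A \#_T B$ to those on $A \oplus B$ and on $T$.

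For parts (1) and (2), where $T = \F$, the presentation in Proposition 7.8 makes all cross-products $x_i y_j$ vanish. Hence for SL elements $L_A$ on $A$ and $L_B$ on $B$, setting $L = L_A + L_B$ yields $L^j = L_A^j + L_B^j$ for all $j \geq 1$, so multiplication by $L^j$ on $D_i = A_i \oplus B_i$ (for $i \geq 1$) is block diagonal with rank $\min(h_A(i), h_A(i+j)) + \min(h_B(i), h_B(i+j))$. Since $A$ and $B$ are Gorenstein of socle degree $d$ with SLP, their Hilbert functions are symmetric and unimodal; I would verify that the differences $h_A(i) - h_A(i+j)$ and $h_B(i) - h_B(i+j)$ always carry the same sign. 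The interesting case is $i < d/2 < i+j$, where symmetry gives $h_A(i+j) = h_A(d-i-j) \leq h_A(i)$ because $d-i-j < i < d/2$, and similarly for $B$. This forces the block-diagonal rank to match $\min(h_D(i), h_D(i+j))$, establishing SLP on the fiber product. The connected sum case additionally requires rescaling $L_A, L_B$ so that $L_A^d + L_B^d$ remains nonzero modulo $\tau_A + \tau_B$. The converses in the standard graded case exploit the splitting $D_1 = A_1 \oplus B_1$ together with a genericity argument on the strong Lefschetz locus of $D$.

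Part (3) begins by observing that $B = T[x]/(x^{d-k+1}) \cong T \otimes_\F \F[x]/(x^{d-k+1})$ is Gorenstein of socle degree $d$ and has SLP in the narrow sense by the tensor product theorem (Corollary 5.5). Both $A$ and $B$ are then Gorenstein of socle degree $d$ with SLP, and one can select compatible strong Lefschetz elements $L_A$ and $L_B$ whose images in $T$ coincide; the hypothesis $\pi_A(\tau_A) = 0$ guarantees the Thom classes align so that $(\tau_A, \tau_B) \in A \times_T B$. The SLP of the fiber product then follows by snake-lemma tracking of the powers $L^j$. For the connected sum, algebraic closure of $\F$ is invoked in order to apply the Hessian criterion to the Macaulay dual generator of $A \#_T B$: one shows that the relevant higher Hessian determinants are generically nonzero, so that a common strong Lefschetz element exists.

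Part (4) is where I expect the main obstacle to lie. The bound $k < \lfloor (d-1)/2 \rfloor$ confines $T$ to degrees $[0, k]$, strictly below the middle of $[0, d]$. In the middle range $k < i < d - k - 1$ the snake lemma collapses because $T_i = T_{i+1} = 0$, and WLP for multiplication by $L$ on $A \times_T B$ reduces to SLP of $A$ and $B$ on the disjoint summands. In the low-degree range $0 \leq i \leq k$, I would combine the Hilbert function identity $h_D(i) = h_A(i) + h_B(i) - h_T(i)$ with a careful snake-lemma dimension count, choosing $L_T := \pi_A(L_A) = \pi_B(L_B)$ generically in $T_1$; the delicate step is verifying that the $T$-contribution does not force the rank of $L$ on $D$ below the WLP threshold $\min(h_D(i), h_D(i+1))$. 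By the Gorenstein symmetry of $A$ and $B$ the high-degree range admits a dual analysis. Finally, the WLP of the connected sum follows by further quotienting by the Thom class $(\tau_A + \tau_B) \in D_d$, whose effect is isolated to the single degree jump $d-1 \to d$; the gap $d - 1 > 2k$ ensured by the hypothesis keeps this boundary outside the $T$-influenced range.
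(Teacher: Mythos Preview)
The paper does not supply a proof of this theorem; it is stated without argument and attributed (via the surrounding section) to \cite{IMS}. So there is no in-paper proof to compare against, and I evaluate your proposal on its own terms.

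Your outline for parts (1) and (2) is sound: the vanishing of all cross products $x_iy_j$ in the presentation over $T=\F$ really does force $L^j=L_A^j+L_B^j$ for $j\ge 1$, and the sign analysis on the Hilbert functions using symmetry and unimodality is correct. The converses need more than a ``genericity argument'' but the idea is in the right direction.

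In part (3) you invoke two results that in this paper are stated only in characteristic zero: the tensor product theorem (Corollary to Theorem~\ref{thm:tensor}) for the SLP of $B=T\otimes_\F\F[x]/(x^{d-k+1})$, and the Hessian criterion (Theorem~\ref{thm: hessian criterion}) for the connected sum. The theorem as stated assumes only that $\F$ is algebraically closed for the connected-sum clause, and nothing at all for the fiber-product clause. Either the original source carries an implicit characteristic-zero hypothesis, or your argument needs tools that do not depend on it; as written this is a gap.

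Part (4) contains a concrete error. You write that the Thom class $(\tau_A,\tau_B)$ lies in $D_d$ and that quotienting by it ``is isolated to the single degree jump $d-1\to d$''. In fact $\tau_A\in A_{d-k}$ and $\tau_B\in B_{d-k}$, so the pair sits in degree $d-k$, and the exact sequence
\[
0\to T(k-d)\to A\times_T B\to A\#_T B\to 0
\]
shows the two algebras differ precisely in degrees $d-k,\ldots,d$. The hypothesis $k<\lfloor (d-1)/2\rfloor$ does place this entire window strictly above the middle degree, which is the correct reason the WLP transfers---but your boundary analysis has to be redone with the Thom class in the right place, and the effect is spread over $k+1$ degrees, not one.
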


\begin{ex}
 Take $$F=XY(XZ-YT)\in K[X,Y,Z,T]$$ and set $A=K[x,y,z,t]/\Ann(F)$.
   Then   
\[
\Ann(F)=(zt, xz + yt,x^{2}t,y^{2}z,x^{2}y^{2},x^{3},y^{3},z^{2},t^{2}),
\]
 $A$ is a connected sum 
    \[
    A=K[x,y,z]/\Ann(X^2YZ)\#_{K[x,y]/\Ann(XY)} K[x,y,t]/\Ann(XY^2T),
    \]
    and  the   Hilbert function of $A$ is $(1, 4, 6, 4, 1)$. 
    By \cref{prop:SLPFP} (4), since the summands of $A$ are monomial complete intersections, $A$ has WLP if the characteristic of $\F$ is 0.
\end{ex}

\begin{ex}[\cite{ADFMMSV}]
 Take $$F=X^3YZ-XY^3T=XY(X^2Z-Y^2T)\in K[X,Y,Z,T]$$ and set $A=K[x,y,z,t]/\Ann(F)$. Then 
     $$\Ann(F)=(z^2,t^2,tz,x^2t,y^2z,x^2z+y^2t,y^4,x^2y^2,x^4),$$
$A$ is a connected sum 
    \[
    A=K[x,y,z]/\Ann(X^3YZ)\#_{K[x,y]/\Ann(XY)} K[x,y,t]/\Ann(XY^3T),
    \]
    and   the   Hilbert function of $A$ is $(1, 4, 7, 7, 4, 1)$.
    
     The Hessian matrix of $F$ of order two is of the following form 
$$
\Hess^2(F)=6\begin{pmatrix}
0&y&x&z&0&0&0\\
y&0&0&x&0&0&0\\
x&0&0&0&0&0&0\\
z&x&0&0&0&-y&-t\\
0&0&0&0&0&0&-y\\
0&0&0&-y&0&0&-x\\
0&0&0&-t&-y&-x&0\\
\end{pmatrix}
$$
and it has vanishing determinant. According to  the Hessian criteria \cref{thm: hessian criterion} $A$ does not have WLP because in this case the second Hessian corresponds to the multiplication map from degree 2 to degree 3. Note that the socle degrees don't satisfy the condition in \cref{prop:SLPFP} since $2=k=\left \lfloor\frac{d-1}{2} \right\rfloor=\frac{5-1}{2}$.
\end{ex}

\subsection{Cohomological blowups}\label{s: BUG}

The second construction is inspired by the geometric operation of blowing up a smooth projective algebraic variety. The blow-up of such a space at a point replaces the point with the set of all directions through the point, that is, a projective space. More generally one can blow up a subset and replace it with another space called an {\em exceptional divisor}. The cohomology ring of the blow-up can be determined based on the cohomology ring of the original variety (called $A$ below), that of the subvariety being blown up (called $T$ below)  and the way the latter sits inside the former, specifically captured via the cohomology class of the {\em normal bundle} of the subvariety, encoded via a polynomial $f_A(\xi)$ below. 

We now explain the algebraic construction for the cohomology ring of a blowup.

\begin{defn}[Cohomological Blow-Up]
	\label{def:blowup}
	For oriented artinian Gorenstein algebras $A$ and $T$ of socle degrees $d>k$, respectively, and surjective degree-preserving algebra map $\pi\colon A\rightarrow T$ with Thom class $\tau\in A_n$ where $n=d-k$, set $K=\ker(\pi)$. Given a homogeneous monic polynomial $f_A(\xi)=\xi^n+a_1\xi^{n-1}+\cdots+a_n\in A[\xi]$ of degree $n$ with homogeneous elements $a_i\in A_i$ for $1\leq i\leq n$ and with $a_n=\lambda\cdot \tau$ for some non-zero constant $\lambda$,  we call the  artinian Gorenstein algebra $\tilde{A}$ below a \emph{cohomological blow up of $A$ along $\pi$} or BUG for short
	\[
	\tilde{A}=\frac{A[\xi]}{(\xi \cdot K,\underbrace{\xi^n+a_1\xi^{n-1}+\cdots+\lambda\cdot\tau}_{f_A(\xi)})}.
\]
	 Setting $t_i=\pi(a_i)$ for $1\leq i\leq n-1$, the AG algebra 
	 \[
	 \tilde{T}=\frac{T[\xi]}{(\underbrace{\xi^n+t_1\xi^{n-1}+\cdots+\lambda\cdot \pi(\tau))}_{f_T(\xi)}}
	 \]
	is called the \emph{exceptional divisor of $T$ with parameters $(t_1,\ldots,t_{n-1},\lambda)$}.  These algebras fit in the following commutative diagram, where  we refer to $A$ as the \emph{cohomological blow down of $\tilde{A}$ along $\hat{\pi}$}.
	\begin{equation*}
\label{eq:cd}
\xymatrix{A\ar[r]^-{}\ar[d]_-{\pi} & \tilde{A}\ar[d]^-{\hat{\pi}}\\ T\ar[r]_-{} & \tilde{T}.}
\end{equation*}  
\end{defn}

Since $\tilde{T}$ is a quotient of a 1-dimensional Gorenstein ring by a non zero-divisor, it is clear that  $\tilde{T}$ is artinian Gorenstein. It is shown in \cite{IMMSW} that the condition that the last term of $f_A(\xi)$ be a scalar multiple of the Thom class $\tau$ is precisely equivalent to  $\tilde{A}$ being AG.

\begin{ex}
\label{ex:notGor}
Let 
$$A=\frac{\F[x,y]}{(x^3,y^3)} \ \overset{\pi}{\rightarrow} \ T=\frac{\F[x,y]}{(x^2,y)}$$
where $\pi(x)=x$ and $\pi(y)=0$.  Note $K=\ker(\pi)=(x^2,y)$.  Orient $A$ and $T$ with socle generators $a_{soc}=x^2y^2$ and $t_{soc}=x$; then the Thom class of $\pi$ is $\tau=xy^2\in A_3$.  Set $f_T(\xi)=\xi^3+x\xi^2\in T[\xi]$ and let $\tilde{T}$ be the associated exceptional divisor algebra:
$$\tilde{T}=\frac{T[\xi]}{(f_T(\xi))}=\frac{\F[x,y,\xi]}{(x^2,y,\xi^3+x\xi^2)}.$$
Consider $f_A(\xi)=\xi^3+x\xi^2+xy^2\in A[\xi]$. This gives rise to the BUG 
$$\tilde{A}=\frac{A[\xi]}{(\xi \cdot K,f_A(\xi))}=\frac{\F[x,y,\xi]}{(x^3,y^3,x^2\xi,y\xi,\xi^3+x\xi^2+xy^2)}$$ 
which has basis 
$$\left\{1,x,y,\xi,x^2,xy,y^2,x\xi,\xi^2,x^2y,xy^2,x\xi^2,x^2y^2\right\}$$
and Hilbert function 
$H(\tilde{A})=(1,3,5,3,1).$
Here the socle of $\tilde{A}$ is generated by $\tilde{a}_{soc}=a_{soc}=x^2y^2$, hence $\tilde{A}$ is Gorenstein, as expected.  
\end{ex}

 We are now ready to discuss the Lefschetz properties for cohomological blow-up algebras.

\begin{thm}[{\cite[Theorem 8.5]{IMMSW}}]
\label{thm:SLPblowup}
Let $\F$ be an infinite field and let $\pi:A\to T$ be a surjective homomorphism of  graded AG $\F$-algebras of socle degrees $d>k$ respectively such that both $A$ and $T$ have SLP.  Assume that characteristic $\F$ is zero or characteristic $F$ is $ p>d$. Then every cohomological blow-up algebra of $A$ along $T$ satisfies SLP.
\end{thm}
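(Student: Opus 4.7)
I would first reduce to SLP in the narrow sense. The BUG algebra $\tilde A$ is artinian Gorenstein of socle degree $d$ (since the socle generator $a_{\rm soc}\in A_d$ lies in $K=\ker\pi$ whenever $d>k$, so its image in $\tilde A$ remains annihilated by $\xi$), hence has symmetric Hilbert function, so by \cref{equiv narrow SLP} it suffices to produce $\tilde L\in\tilde A_1$ with $\times \tilde L^{d-2i}\colon \tilde A_i \to \tilde A_{d-i}$ bijective for all $0\le i\le\lfloor d/2\rfloor$. My candidate is $\tilde L = L + c\xi$ with $c\in\F^\times$ generic and $L\in A_1$ chosen so that both $L$ is Lefschetz on $A$ and $\bar L = \pi(L)$ is Lefschetz on $T$; such $L$ exists because $\F$ is infinite, $\pi\colon A_1\to T_1$ is surjective, and the Lefschetz loci are Zariski open and nonempty.

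The key structural input is the direct sum decomposition
\[
\tilde A_m = A_m \;\oplus\; \bigoplus_{j=1}^{n-1} \xi^j\, T_{m-j},
\]
which follows from $\xi\cdot K = 0$ (so $\xi^j\cdot A = \xi^j\cdot T$ for $j\ge 1$) and the defining relation $f_A(\xi)=0$ (which rewrites $\xi^n$ as an $A$-linear combination of $1,\xi,\ldots,\xi^{n-1}$). In particular $H_{\tilde A} = H_A + H_{\tilde T} - H_T$, the analogue of the cohomological blow-up formula. Relative to this decomposition, $\times L$ acts block-diagonally via $\times L$ on $A$ and $\times \bar L$ on each $\xi^j T$, while $\times \xi$ shifts $A \to \xi T$ via $\pi$, shifts $\xi^j T \to \xi^{j+1}T$ for $1\le j\le n-2$, and on the top piece $\xi^{n-1}T$ folds back via $\xi^n = -\sum_{i=1}^{n-1}a_i\xi^{n-i}-\lambda\tau$.

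With this structure in hand, I would analyze the matrix of $\times\tilde L^{d-2i}=\sum_r \binom{d-2i}{r}c^r L^{d-2i-r}\xi^r$ with respect to the decomposition and show its determinant is a nonzero polynomial in $c$. The strategy is to exhibit a limit (for instance $c\to 0$, or a scaling of $\xi$) in which the matrix degenerates to a block form whose diagonal blocks are $\times L^{d-2i}\colon A_i\to A_{d-i}$ (bijective by SLPn on $A$) and suitable powers of $\times\bar L$ on the $\xi^j T$ pieces (bijective by SLPn on $T$, combined with the $\xi$-shift). The characteristic hypothesis $\chr(\F)=0$ or $\chr(\F)>d$ guarantees all binomial coefficients appearing in the expansion are nonzero, preventing purely characteristic-theoretic degeneracies of the kind ruled out for Stanley's theorem.

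The main obstacle is making this block analysis rigorous, because the off-diagonal couplings introduced by $\xi$ become substantial when $c\neq 0$ and the naive $c=0$ limit is itself degenerate on $\xi^{n-1}T$ (multiplication by $\xi$ alone cannot achieve bijectivity since it annihilates $K$), so one must instead track how the $a_i$-coefficients of $f_A$ participate in the top block. A cleaner route, and probably the one the actual proof follows, is the Hessian criterion (\cref{thm: hessian criterion}): identify a Macaulay dual generator $\tilde F$ of $\tilde A$ from $F_A$, $F_T$, and the parameters $a_1,\ldots,a_{n-1},\lambda$ (using that the Gorenstein pairing on $\tilde A$ is pinned down by $a_{\rm soc}$), then show the Hessians $\hess^i(\tilde F)$ for $0\le i\le d/2$ do not vanish by relating their block structure to $\hess^i(F_A)$ and appropriate Hessians of $F_T$—both nonzero by the SLPs of $A$ and $T$—in the spirit of the connected-sum computation at the end of \cref{HESS}. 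A third, more conceptual route would be to lift the $\sl_2$-triples on $A$ and $T$ provided by \cref{thm:sl2andSLPn} to an $\sl_2$-triple on $\tilde A$ with $E=\times\tilde L$; the difficult point there is choosing the $F$-operator compatibly with the relation $\xi^n = -\sum a_i\xi^{n-i}-\lambda\tau$, which is exactly where the Thom class and the BUG parameters enter the analysis.
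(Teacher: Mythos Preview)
The paper does not contain a proof of this theorem: it is simply quoted from \cite[Theorem~8.5]{IMMSW} and immediately followed by examples. So there is no in-text argument to compare your proposal against.

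That said, your proposal is not a proof but a survey of three possible strategies, none of which you carry through. You correctly set up the reduction to SLPn, the candidate Lefschetz element $\tilde L=L+c\xi$, and the direct-sum decomposition $\tilde A_m=A_m\oplus\bigoplus_{j=1}^{n-1}\xi^jT_{m-j}$, and you correctly diagnose why the naive $c=0$ limit fails (the $\xi^{n-1}T$ block degenerates). But you then stop, saying the block analysis is ``the main obstacle'' and pivoting to the Hessian and $\sl_2$ routes without executing either. In particular, for the Hessian route you would need an explicit Macaulay dual generator for $\tilde A$ and a block-determinant computation; for the $\sl_2$ route you would need to build the $F$-operator across the folding relation $\xi^n=-\sum a_i\xi^{n-i}-\lambda\tau$. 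Neither is supplied.

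For what it is worth, the argument in \cite{IMMSW} is closest to your first strategy: it works with the decomposition you wrote down and analyzes $\times(L+c\xi)^{d-2i}$ as a block-triangular map relative to a suitable filtration, using SLP of $A$ for the bottom block and SLP of $T$ (hence of the exceptional divisor $\tilde T$, via a one-variable extension) for the remaining blocks, with the characteristic hypothesis controlling the binomial coefficients exactly as you anticipated. The missing ingredient in your write-up is the filtration that makes the block structure genuinely triangular rather than merely ``block with off-diagonal couplings''; once that is in place the determinant factors and the generic-$c$ argument goes through.
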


The following example shows that the converse of \cref{thm:SLPblowup} is not true: if the cohomological blowup $\tilde{A}$ has SLP it does not follow that $A$ has SLP. In other words, while the process of blowing up preserves SLP, the process of blowing down does not preserve SLP, nor even WLP.

\begin{ex}
	\label{ex:Rodrigo}
As in \cref{ex: Gondim}, the following example, originally due to U. Perazzo \cite{Perazzo}, but re-examined more recently by R. Gondim and F. Russo \cite{GR}, is an artinian Gorenstein algebra with unimodal Hilbert function which does not have SLP or WLP:
	\begin{eqnarray*}
	A &=& \frac{\F[x,y,z,u,v]}{\operatorname{Ann}(XU^2+YUV+ZV^2)} \\
	&=&\frac{\F[x,y,z,u,v]}{\left(x^2,xy,y^2,xz,yz,z^2,u^3,u^2v,uv^2,v^3,xv,zu,xu-yv,zv-yu\right)}.
	\end{eqnarray*}
	Taking the quotient $T$ of $A$ given by the Thom class $\tau=u^2$ yields
	$$T=\frac{\F[x,y,z,u,v]}{\operatorname{Ann}(X)}=\frac{\F[x,y,z,u,v]}{\left(x^2,y,z,u,v\right)}\cong\frac{\F[x]}{(x^2)}.$$
	Fix a parameter $\lambda\in\F$ and define  polynomials $f_T(\xi)\in T[\xi]$ and $f_A(\xi)\in A[\xi]$ by
	$$f_T(\xi)=\xi^2-\lambda x\xi \quad \text{ and } f_A(\xi)=\xi^2-\lambda x\xi+u^2.$$
	Denoting the ideal of relations of $A$ by $I$ we obtain the cohomological blowup 
	$$\tilde{A}=\frac{\F[x,y,z,u,v, \xi]}{I+\xi(y,z,u,v)+(f_A(\xi))}, $$
	which has Hilbert function $H(\tilde{A})=H(A)+H(T)[1]=(1,6,6,1)$.
	Fix $\F$-bases 
	$$\tilde{A}_1=\operatorname{span}_{\F}\left\{x,y,z,u,v, \xi\right\}, \ \ \text{and} \ \ \tilde{A}_2=\operatorname{span}_{\F}\left\{u^2,uv,v^2,yv,yu,-x\xi\right\}$$
	and let $\ell\in\tilde{A}_1$ be a general linear form 
	$$\ell=ax+by+cz+du+ev+f\xi.$$
	Then the matrix for the Lefschetz map $\times\ell\colon \tilde{A}_1\rightarrow\tilde{A}_2$ and its determinant are given by
	$$M = \left(\begin{array}{cccccc}
	0 & 0 & 0 & d & 0 & -f\\
	0 & 0 & 0 & e & d & 0\\
	0 & 0 & 0 & 0 & e & 0\\
	d & e & 0 & a & b & 0\\
	0 & d & e & b & c & 0\\
	-f & 0 & 0 & 0 & 0 & -(a+\lambda f)\\
	\end{array}\right)\Rightarrow \det(M)=f^2e^4.$$ 
Thus $\ell$ is a strong Lefschetz element  for $\tilde{A}$ if and only if $e\cdot f\neq 0$.  In particular $\tilde{A}$ satisfies SLP and also WLP.
\end{ex}

\noindent Surprisingly, the analogous result to \cref{thm:SLPblowup} does not hold for the weak Lefschetz property.  
We now give an example illustrating that blowing up does not preserve WLP.
	
	\begin{exer}\label{8.7ex}
	Consider the following algebra 
\begin{eqnarray*}
A &=& \frac{\F[x,y,z,u,v]}{\operatorname{Ann}(XU^6+YU^4V^2+ZU^5V)}\\
&=& \frac{\F[x,y,z,u,v]}{\left(yz,xz,xy,vy-uz,vx,ux-vz,u^5y,u^5v^2,u^6v,u^7,v^3,x^2,y^2,z^2\right)}
\end{eqnarray*}
and its quotient  corresponding to the Thom class $\tau=u^3$
\begin{eqnarray*}
T &=& \frac{\F[x,y,z,u,v]}{\operatorname{Ann}(XU^3+YUV^2+ZU^2V)}\\
&=& \frac{\F[x,y,z,u,v]}{\left(z^2,yz,xz,y^2,xy,vy-uz,x^2,vx,ux-vz,u^2y,v^3,u^2v^2,u^3v,u^4 \right)}
\end{eqnarray*}
Consider also the cohomological blowup 
$$\tilde{A}=\frac{\F[x,y,z,u,v, \xi]}{I+\xi\cdot K+(\xi^3-u^3)}, \quad \text{where } K=\ker(A\twoheadrightarrow T).$$
\begin{enumerate}[(a)]
\item Compute the Hilbert functions of $A$ and $T$ respectively.
\item Show that both $A$ and $T$ satisfy WLP, but not SLP.
\item Show that the BUG $\tilde{A}$ does not satisfy WLP.
\end{enumerate}
	\end{exer}
	
In \cref{8.7ex} the Thom class of the map $A\to T$ has degree 3. This is the minimal possible value for such an example based on the following result.

\begin{thm}[{\cite[Theorem 8.9]{IMMSW}}]
Let $\F$ be an infinite field and let $\pi:A\to T$ be a surjective homomorphism of  graded AG $\F$-algebras such that the difference between the socle degrees of $A$ and $T$ is at most 2 and $A$ and $T$ both satisfy WLP. Then every cohomological blow-up algebra of $A$ along $\pi$ satisfies WLP.
\end{thm}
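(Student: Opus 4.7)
The plan splits into cases on $n = d-k \in \{1,2\}$, only the second of which is substantive. For $n=1$, the relation $f_A(\xi) = \xi + \lambda\tau$ lets us eliminate $\xi = -\lambda\tau$ from $\tilde A$, which collapses to $A/(\tau K)$. Using the defining property $\int_A(\tau a) = \int_T(\pi(a))$ of the Thom class together with the Poincar\'e pairing on $A$, one checks that $\tau K = 0$ already in $A$: for any $a \in K$ and any $b \in A$ of complementary degree, $\int_A(\tau a b) = \int_T(\pi(a)\pi(b)) = 0$ since $\pi(a) = 0$, so the perfect pairing forces $\tau a = 0$. Hence $\tilde A \cong A$ in this case, and WLP is immediate from the hypothesis on $A$.

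Assume now $n = 2$. I would pick a weak Lefschetz element $\ell \in A_1$ whose image $\ell_T := \pi(\ell)$ is also weak Lefschetz on $T$; this is possible because $\F$ is infinite, the non-WLP loci $\NLL_w(A) \subsetneq A_1$ and $\NLL_w(T) \subsetneq T_1$ are Zariski closed, and $\pi_1 \colon A_1 \twoheadrightarrow T_1$ is surjective. The candidate WLP element is $\tilde\ell = \ell + c\xi$ for generic $c \in \F^{\times}$. Using $\xi K = 0$ and $\tau K = 0$, one obtains the graded vector space decomposition $\tilde A_i = A_i \oplus \xi \cdot T_{i-1}$, compatible with the Hilbert series identity $H_{\tilde A}(t) = H_A(t) + t\,H_T(t)$. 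In this decomposition, multiplication by $\tilde\ell$ takes the block form
\[
M_i \;=\; \begin{pmatrix} \times\ell & -c\lambda\,\bar\tau \\ c\pi & \times(\ell_T - ct_1) \end{pmatrix}
\colon A_i \oplus \xi T_{i-1} \longrightarrow A_{i+1} \oplus \xi T_i,
\]
where $t_1 = \pi(a_1)$, $\pi \colon A_i \twoheadrightarrow T_i$ is the projection (surjective), and $\bar\tau \colon T_{i-1} \to A_{i+1}$ is the well-defined map induced by multiplication by $\tau$, which is injective by the nondegeneracy of the Poincar\'e pairing on $A$ applied via the identity $\int_A(\tau a b) = \int_T(\pi(a)\pi(b))$. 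The two diagonal blocks are themselves weak Lefschetz maps: the first by choice of $\ell$, and the second because $\ell_T - ct_1$ remains a weak Lefschetz element of $T$ for generic $c$ by Zariski openness of the WLP locus.

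The main obstacle is verifying $\operatorname{rank} M_i = \min\bigl(h_{\tilde A}(i),\, h_{\tilde A}(i+1)\bigr)$ in every degree. When both diagonal blocks are in the same regime (both injective in small degrees, both surjective in large degrees), full rank of $M_i$ follows from an elementary block-matrix argument combined with the off-diagonal maps $\bar\tau$ and $\pi$ having maximal rank. The delicate degrees are the \emph{transition} degrees, where one block is becoming surjective while the other is still injective (or vice versa); here one must use genericity of $c$ together with the fact that the off-diagonal entries land in the complementary direct summand, showing that they fill in any incipient rank deficit. The hypothesis $d - k \le 2$ is precisely what forces the transition degrees of $A$ (near $\lfloor d/2\rfloor$) and of $\xi T$ (near $\lfloor k/2\rfloor + 1$) to differ by at most one, so at most one of the two diagonal blocks is in transition at any given degree. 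The technical heart of the proof is the degree-by-degree bookkeeping, split according to the parity and relative size of $d$ and $k$, showing that no coincidence of transitions can defeat the rank-maximality provided by the off-diagonal blocks.
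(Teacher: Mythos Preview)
The paper does not contain a proof of this theorem; it is simply quoted from \cite{IMMSW} without argument, so there is no in-paper proof to compare your proposal against. I can only comment on the soundness of your outline on its own terms.

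Your treatment of $n=1$ is correct: the Thom-class identity $\int_A(\tau ab)=\int_T(\pi(a)\pi(b))$ together with Poincar\'e duality on $A$ does force $\tau K=0$, whence $\tilde A\cong A$.

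For $n=2$, the block decomposition $\tilde A_i=A_i\oplus\xi T_{i-1}$ and the block form of $M_i$ are correct, and the strategy is the right one. Two refinements are worth making. First, the cleanest way to conclude is to specialise to $c=0$: then $M_i$ is block diagonal with blocks $\times\ell$ and $\times\ell_T$, and lower semicontinuity of rank in $c$ gives full rank for generic $c$ provided the $c=0$ rank already equals $\min(h_{\tilde A}(i),h_{\tilde A}(i+1))$. Second, your description of the transition degrees is slightly off. With $k=d-2$, the map $\times\ell_T\colon T_{i-1}\to T_i$ is injective exactly when $i\le\lceil k/2\rceil=\lceil d/2\rceil-1$, while $\times\ell\colon A_i\to A_{i+1}$ can fail to be injective only when $i\ge\lfloor d/2\rfloor$; a short parity check (using $h_A((d-1)/2)=h_A((d+1)/2)$ when $d$ is odd) shows the two conditions are never simultaneously violated in the strict sense. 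Thus the blocks are never genuinely mismatched, and the $c=0$ rank is always $\min(h_A(i),h_A(i+1))+\min(h_T(i-1),h_T(i))=\min(h_{\tilde A}(i),h_{\tilde A}(i+1))$. The off-diagonal maps $\bar\tau$ and $\pi$ are not actually needed to close the argument, and no separate degree-by-degree bookkeeping is required. You should state the specialisation argument explicitly rather than gesturing at ``genericity filling in rank deficits.''
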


\vspace{-0.3em}

\end{document}